\newtheorem{algorithm}[theorem]{Algorithm}
\newtheorem{assumption}[theorem]{Assumption}
\def\@listI{\leftmargin\leftmargini
            \listparindent\parindent
            \parsep \z@\labelsep.5em
            \topsep 0.5\p@ \@plus1.5\p@
            \itemsep0\p@}
\def\OO{{\mathcal{O}}}
\def\QQ{\mathbb{Q}}
\def\ZZ{\mathbb{Z}}
\def\X{{\mathbb{X}}}
\def\F{{\mathcal{F}}}
\def\H{{\mathcal{H}}}
\def\AA{\mathbb{A}}
\def\M{\mathfrak{M}}
\def\Q{{\mathfrak{Q}}}
\def\m{\mathfrak{m}}
\def\q{\mathfrak{q}}
\def\A{\mathcal{A}}
\def\B{\mathbb{B}}
\let\epsilon=\varepsilon
\def\phi{{\varphi}}
\let\Psi=\varPsi
\let\Phi=\varPhi
\let\theta=\vartheta
\DeclareSymbolFont{CMMfont}{OML}{cmm}{m}{it}
\DeclareMathSymbol{\Varrho}{3}{CMMfont}{37}
\def\rho{{\mathop{\Varrho}\,}}
\def\LT{\mathop{\rm LT}\nolimits}
\def\LF{\mathop{\rm LF}\nolimits}
\def\DF{\mathop{\rm DF}\nolimits}
\def\HF{\mathop{\rm HF}\nolimits}
\def\HFa{\mathop{\rm HF}\nolimits^a}
\def\ri{\mathop{\rm ri}\nolimits}
\def\Mat{\mathop{\rm Mat}\nolimits}
\def\Spec{\mathop{\rm Spec}\nolimits}
\def\Hilb{\mathop{\rm Hilb}\nolimits}
\def\Rad{\mathop{\rm Rad}\nolimits}
\def\Col{\mathop{\rm Col}\nolimits}
\def\gr{\mathop{\rm gr}\nolimits}
\def\grF{{\mathop{\rm gr}\nolimits_{\mathcal{F}}}}
\def\ord{\mathop{\rm ord}\nolimits}
\def\sepdeg{\mathop{\rm sepdeg}\nolimits}
\def\Hbar{\overline{\mathcal{H}}}
\def\adj{\mathop{\rm adj}\nolimits}
\def\Fbar{{\overline{\mathcal{F}}}}
\def\BO{\mathbb{B}_{\mathcal{O}}}
\def\BOdf{\mathbb{B}_{\mathcal{O}}^{\rm df}}
\def\BOhom{\mathbb{B}_{\mathcal{O}}^{\rm hom}}
\def\LGor{\mathop{\rm LGor}\nolimits}
\def\NonLGor{\mathop{\rm NonLGor}\nolimits}
\def\CB{\mathop{\rm CB}\nolimits}
\def\NonCB{\mathop{\rm NonCB}\nolimits}
\def\SCB{\mathop{\rm SCB}\nolimits}
\def\NonSCB{\mathop{\rm NonSCB}\nolimits}
\def\SGor{\mathop{\rm SGor}\nolimits}
\def\NonSGor{\mathop{\rm NonSGor}\nolimits}
\def\SCI{\mathop{\rm SCI}\nolimits}
\def\NonSCI{\mathop{\rm NonSCI}\nolimits}
\let\To=\longrightarrow
\def\hom{^{\rm hom}}
\def\df{^{\rm df}}
\def\tr{^{\,\rm tr}}
\def\tfrac #1#2{{\textstyle\frac{#1}{#2}}}
\def\cocoa{\mbox{\rm
  C\kern-.13em o\kern-.07 em C\kern-.13em o\kern-.15em A}}
\def\apcocoa{\mbox{\rm
A\kern-0.13em p\kern -0.07em C\kern-.13em o\kern-.07 em C\kern-.13em
o\kern-.15em A}}
\begin{document}
\markboth{Kreuzer, Long and Robbiano}
{Computing Subschemes of the Border Basis Scheme}

\title{COMPUTING SUBSCHEMES OF THE BORDER BASIS SCHEME}

\author{MARTIN KREUZER}
\address{Fakult\"at f\"ur Informatik und Mathematik, Universit\"at
Passau, D-94030 Passau, Germany\\
\email{Martin.Kreuzer@uni-passau.de}
 }

\author{LE NGOC LONG}
\address{Fakult\"at f\"ur Informatik und Mathematik, Universit\"at
Passau, D-94030 Passau, Germany and Department of Mathematics, 
University of Education - Hue University, 34 Le Loi, Hue, Vietnam\\
\email{lelong@hueuni.edu.vn}  }

\author{LORENZO ROBBIANO}
\address{Dipartimento di Matematica, Universit\`a di Genova,
Via Dodecaneso 35, I-16146 Genova, Italy\\ 
\email{lorobbiano@gmail.com}  }

\maketitle

\begin{abstract}
{\it Abstract:}
A good way of parametrizing 0-dimensional schemes in an affine 
space~$\AA_K^n$ has been developed in the last 20 years using border 
basis schemes.
Given a multiplicity~$\mu$, they provide an open covering of the Hilbert 
scheme $\Hilb^\mu(\AA^n_K)$ and can be described by 
easily computable quadratic equations. A natural question arises on how
to determine loci which are contained in border basis schemes and 
whose rational points represent 0-dimensional $K$-algebras sharing a given property.
The main focus of this paper is on giving effective answers to this general
problem. The properties considered here are the locally Gorenstein, strict Gorenstein,
strict complete intersection, Cayley-Bacharach, and strict Cayley-Bacharach properties.
The key characteristic of our approach is that we describe these loci by exhibiting 
explicit algorithms to compute their defining ideals. 
All results are illustrated by non-trivial, concrete examples.
\end{abstract}

\keywords{zero-dimensional ideal, border basis, border basis scheme, Cayley-Bacharach property,
Gorenstein ring, strict complete intersection}

\ccode{Mathematics Subject Classification 2010: Primary 13C40, Secondary  14M10, 13H10, 13P99, 14Q99}


\markboth{Kreuzer, Long and Robbiano}
{Subschemes of the Border Basis Scheme}

\bigskip
%
%

\section{Introduction}

In~\cite{Gro}, A.~Grothendieck defined the Hilbert scheme
$\Hilb^\mu(\AA^n_K)$ which parametrizes all 0-dimensional subschemes
of length~$\mu$ of~$\mathbb{A}^n_K$ as a quotient of a suitable Grassmannian variety.
If we turn this construction into a computable presentation, its 
defining relations yield a rather large and unwieldy system of equations
(see, for instance, \cite{IK}).
A more convenient way of parametrizing 0-dimensional schemes in~$\mathbb{A}^n_K$,
or equivalently, of 0-dimensional affine algebras over a field~$K$, 
has been developed in the last 20 years, namely the border basis scheme
(see \cite{Hai}, \cite{Hui1}, \cite{KR4}, \cite{KR5}, and~\cite{Rob}).

The border basis schemes corresponding to order ideals of length~$\mu$
form an open covering of the Hilbert scheme $\Hilb^\mu(\mathbb{A}^n_K)$,
and they have simple, explicit presentations involving easily describable
quadratic equations.
Having such explicit descriptions of large families of 0-dimensional
schemes, it is a natural question to ask for similar descriptions
of the subsets given by all \hbox{0-di}\-men\-sional schemes having certain
geometric or algebraic properties. This is the task that we 
tackle here, and we treat algebraic properties such as the
locally Gorenstein, strict Gorenstein and strict complete intersection
properties, as well as geometric properties such as the Cayley-Bacharach
and strict Cayley-Bacharach properties. Notice that, for instance, although the
locally Gorenstein locus of the Hilbert scheme is actively researched (e.g., 
see~\cite{BCR19}, \cite{CJN15}, \cite{CN09}, \cite{CN11}), the simple 
explicit defining equations computed here appear to be new.

In more detail, we proceed as follows. In Sections~2 and~3, we
recall some basic material about border bases and border basis schemes,
respectively. Let $\OO=\{t_1,\dots,t_\mu\}$ be an order ideal of
terms in $K[x_1,\dots,x_n]$, i.e., a factor-closed finite set of
power products of indeterminates. We define its border
$\partial\OO= \bigl( \bigcup_{i=1}^n x_i\OO \bigr) \setminus \OO
= \{b_1,\dots,b_\nu\}$ and the generic $\OO$-border prebasis
$G=\{g_1,\dots,g_\nu\}$, where $g_j=b_j -\sum_{i=1}^\mu c_{ij}t_i$
and where $C=\{c_{ij}\}$ is a set of new indeterminates.
Then the border basis scheme~$\BO$ is the subscheme of 
$\mathbb{A}^{\mu\nu}_K \cong \Spec(K[C])$ defined by the vanishing
of the entries of the commutators of the generic multiplication
matrices for $K[C][x_1,\dots,x_n]/\langle G\rangle$.
Notice that the scheme~$\BO$ may be non-reduced and can have
irreducible components of higher dimension than the expected
dimension $n\mu$ (see~\cite{KR4}, Example 5.6). Nevertheless it is
defined by a set of nice and easy quadratic polynomials
in the indeterminates $c_{ij}$.

In Section~4 we start by looking at the subscheme of~$\BO$ defined
by the property that the affine coordinate ring~$R_\X$ of the scheme~$\X$
represented by a $K$-rational point of~$\BO$ is locally Gorenstein.
This ring-theoretic property has been characterized in~\cite{KLR1}
by the condition that a certain determinant of a matrix constructed 
from the multiplication matrices of~$R_\X$ with respect to the basis~$\OO$
is non-zero. The global version of this characterization yields
Algorithm~\ref{alg:GorLoc}, where the equations defining the locally Gorenstein
locus in~$\BO$ are calculated explicitly.

Other loci require us to stratify the border basis scheme before we can
compute their defining ideals. A first important step in this direction
is taken in Section~5, where we define and study the degree filtered 
border basis scheme~$\BOdf$. This is the subscheme of~$\BO$ whose
$K$-rational points represent schemes~$\X$ such that~$\OO$ is a degree
filtered $K$-basis of~$R_\X$, i.e., such that the elements of degree $\le i$
in~$\OO$ form a $K$-basis of the $i$-th part of the degree filtration of~$R_\X$
for every $i\ge 0$. The scheme~$\BOdf$ is the basic Hilbert stratum
of~$\BO$, and the computation of various loci in other Hilbert strata
will be reduced to this case later in the paper.

Sections~6, 7, and~8 provide several algorithms which are central to this paper.
Firstly, we compute the Cayley-Bacharach locus in~$\BOdf$ in Algorithm~\ref{alg:CBLocDF}.
Here we use the results of~\cite{KLR1} again and generalize them to 
the universal border basis family. Recall that the Cayley-Bacharach property
of a 0-dimensional scheme is a geometric property which has been generalized
multiple times in the history of Mathematics (see the introduction of~\cite{KLR1})
and is used here in its most general form: over an arbitrary base field~$K$, and
for an arbitrary 0-dimensional scheme~$\X$ in~$\mathbb{A}^n_K$.

Secondly, we compute the strict Cayley-Bacharach locus in~$\BOdf$ 
in Algorithm~\ref{alg:SCBLocDF}. The adjective ``strict'' indicates that we are looking
for the locus in~$\BOdf$ whose \hbox{$K$-ra}\-tio\-nal points represent schemes~$\X$
such that the graded ring $\grF(R_\X)$ of~$R_\X$ with respect to the degree 
filtration has the Cayley-Bacharach property. Geometrically, this means that
also the tip of the affine cone over~$\X$ has this property, and in
projective geometry, the corresponding adjective is ``arithmetical''.
Algorithm~\ref{alg:SCBLocDF} is based on the
homogeneous border basis scheme~$\BOhom$, recalled in Section~5, which parametrizes
the rings~$\grF(R_\X)$, and also on Algorithm~\ref{alg:CheckSCBS} which
allows us to check the strict Cayley-Bacharach property for a single
scheme~$\X$ using its multiplication matrices.

Thirdly, the strict complete intersection locus in~$\BOdf$ is computed
in Algorithm~\ref{alg:SCILocDF}. Again, the adjective ``strict'' refers
to the ring $\grF(R_\X)$ being a complete intersection, or, equivalently,
that $R_\X = K[x_1,\dots,x_n]/\langle f_1,\dots,f_n\rangle$ where the degree 
forms $\{\DF(f_1),\dots,\DF(f_n\}\}$ form a (homogeneous) regular sequence. 
For one scheme~$\X$, an algorithm for checking this property using the 
multiplication matrices of~$R_\X$ was developed in~\cite{KLR2}. 
Here we extend this method to the universal border basis family and get 
Algorithm~\ref{alg:SCBLocDF}.

The further parts of the paper focus on using these results to describe
the corresponding loci in all of~$\BO$. To this end, we introduce
and compute the Hilbert stratification of~$\BO$ in Section~9. Given an
admissible Hilbert function~$\H$, we first compute the closed 
subscheme~$\BO(\Hbar)$ of~$\BO$ whose $K$-rational points represent
schemes such that their affine Hilbert function is dominated by~$\H$, 
and then the open subscheme~$\BO(\H)$ of~$\BO(\Hbar)$ which is the locus 
where the affine Hilbert function is exactly~$\H$. 

However, for several applications, we even have to fix a degree filtered
basis of the ring~$R_\X$. This is achieved in Section~10 by covering
$\BO(\H)$ with open subschemes $\BO^{\rm dfb}(\OO')$, called $\OO'$-DFB subschemes,
such that the order ideal~$\OO'$ is a degree filtered $K$-basis of~$R_\X$
for all schemes~$\X$ represented by $K$-rational points of $\BO^{\rm dfb}(\OO')$. 
The ideals describing the various schemes $\BO^{\rm dfb}(\OO')$
are computed in Algorithm~\ref{alg:DFB}. Then the key result for the
last parts of the paper is Algorithm~\ref{alg:Combine}. It allows us
to combine the ideals describing certain loci in the individual schemes
$\BO^{\rm dfb}(\OO')$ to ideals describing them in~$\BO(\H)$, and the underlying
base changes and morphisms of universal families are made explicit. 
Some computational simplifications are suggested in Remark~\ref{ImprovedCombine}.

The last three Sections~11, 12, and~13 apply the technique of Algorithm~\ref{alg:Combine}
and the respective methods to the degree filtered case.
In Section~11, we compute the Cayley-Bacharach locus of~$\BO(\H)$ 
(see Algorithm~\ref{alg:CBlocus}) and derive a method for calculating
the strict Gorenstein locus (see Corollary~\ref{alg:SGorInBOH}).
Next, Section~12 treats the strict Cayley-Bacharach locus of~$\BO(\H)$
via Algorithm~\ref{alg:SCBlocus}, and Section~13 
shows how to compute the strict complete intersection locus in~$\BO(\H)$
via Algorithm~\ref{alg:SCIlocus}.

All algorithms are illustrated by applying them to non-trivial, explicit
examples. They show that one can ``really do it'' in small cases, rather
than being able to ``do it in principle".
These examples were calculated by a package
for the computer algebra system \cocoa\ (see~\cite{CoCoA})
written by the second author, and available via the 
project web page (see~\cite{KLR3}).

Before starting with the paper properly, let us mention some general aspects of
the results and their presentation. We always work over an arbitrary 
base field~$K$. This entails that all results are characteristic-free.
Although we shall frequently talk about $K$-rational points of
certain schemes, such points may not exist over the given field.
Notice that we may enlarge~$K$ without changing the results
of the computations, because they are performed over~$K$.
Hence the claims remain true if we consider all $L$-rational points
for a field extension $L\supseteq K$, for instance, 
the algebraic closure $L=\overline{K}$, and any mention of $K$-rational points should
be read in this way.

In principle, all subschemes we consider should be equipped with the induced reduced
scheme structure. However, our algorithms produce ideals defining the desired subsets 
of the border basis scheme which are not necessarily radical ideals. Of course, if we replace
one ideal~$I$ by another ideal~$J$ such that $\Rad(I)=\Rad(J)$, 
then the ideal~$J$ defines the same set of $K$-rational points as~$I$. 
To simplify the exposition and the calculations, when we write that an ideal ``defines a subscheme'',
we really mean ``up to radical''. On a number of occasions, this freedom will be essential, 
since the calculation of the radical of an ideal in many indeterminates can be an unsurmountable 
burden.

Unless explicitly stated otherwise, we use the notation
and definitions of~\cite{KR1}, \cite{KR2}, and~\cite{KR3}.

\bigbreak
%
%

\section{Border Bases of Zero-Dimensional Ideals}
\label{Border Bases of Zero-Dimensional Ideals}

In the following we let $K$ be a field, let $P=K[x_1,\dots,x_n]$
be a polynomial ring over~$K$, let $\mathbb{T}^n = 
\{ x_1^{\alpha_1}\cdots x_n^{\alpha_n} \mid \alpha_i \ge 0\}$
be the monoid of terms in~$P$, and let~$I$ be a 0-dimensional ideal 
in~$P$. Border bases of~$I$ are defined as follows.

\begin{definition} Let $\mu\ge 1$.
\begin{enumerate}
\item[(a)] A set of terms $\OO=\{t_1,\dots,t_\mu\}$
is called an {\bf order ideal} in~$\mathbb{T}^n$ if
$t\in\OO$ implies that every term $t'\in\mathbb{T}^n$
which divides~$t$ is also contained in~$\OO$. 

\item[(b)] For an order ideal $\OO$, the set of terms
$\partial\OO = (x_1\OO \cup \cdots \cup x_n\OO)\setminus \OO$
is called the {\bf border} of~$\OO$. 

\item[(c)] Let $\OO=\{t_1,\dots,t_\mu\}$ be an order ideal
and $\partial \OO = \{b_1,\dots,b_\nu\}$ its border.
A set of polynomials $G=\{g_1,\dots,g_\nu\}$ in~$P$ is called
an {\bf $\OO$-border prebasis} if they are of the form
$g_j = b_j -\sum_{i=1}^\mu \gamma_{ij}\, t_i$ with
$\gamma_{ij}\in K$ for $i=1,\dots,\mu$ and $j=1,\dots,\nu$.

\item[(d)] An $\OO$-border prebasis $G\subset I$ is called an 
{\bf $\OO$-border basis} of~$I$ if the residue classes
of the terms in~$\OO$ from a $K$-basis of~$P/I$.
\end{enumerate}
\end{definition}

For more information about border bases, we refer to~\cite{KR2}, 
Section~6.4. In particular, we note that a border basis always generates
the ideal~$I$.

\begin{assumption}\label{orderOO}
For an order ideal $\OO=\{t_1,\dots,t_\mu\}$ in~$\mathbb{T}^n$,
we always assume that $\deg(t_1) \le \cdots \le \deg (t_\mu)$.
In particular, this implies that we have $t_1=1$.
\end{assumption}

Another way to view this setting is given by Algebraic Geometry.
Here we consider the 0-dimensional subscheme~$\X$ of~$\AA^n_K$
whose vanishing ideal is~$I$. We denote its affine coordinate ring $P/I$
by~$R_\X$ and write~$I_\X$ instead of~$I$.
Notice that we always choose a fixed embedding of~$\X$
into $\AA^n_K = \Spec(P)$ and also fix the coordinate system.
Clearly, the notion of a border basis of~$I_\X$ depends on these choices.
The vector space dimension of~$R_\X$ over~$K$ is finite.
It is sometimes called the {\bf length} of~$\X$ and 
will be denoted by~$\mu= \dim_K(R_\X)$.

Since we are keeping the coordinate system fixed
at all times, we have further invariants of~$\X$.
Recall that the {\bf degree filtration} $\widetilde{\F}
= (F_iP)_{i\in\ZZ}$ on~$P$ is given by $F_iP =
\{f\in P\setminus \{0\} \mid \deg(f)\le i\} \cup \{0\}$
for all $i\in\ZZ$. The induced filtration
$\F = (F_iR_\X)_{i\in\ZZ}$, where $F_i R_\X =
F_iP /(F_iP\cap I_\X)$, is called the {\bf degree filtration} 
on~$R_\X$. The degree filtration
on~$R_\X$ is increasing, exhaustive and {\bf orderly} in the
sense that every element $\bar{f}\in R_\X \setminus \{0\}$ has an
{\bf order} 
$$
\ord_\F(\bar{f}) \;=\; \min\{i\in\ZZ \mid \bar{f}\in F_i R_\X
\setminus F_{i-1}R_\X\}
$$
The degree filtration allows us to introduce the following
concepts.

\begin{definition}\label{def:affineHF}
Let~$\X$ be a 0-dimensional subscheme of~$\AA^n_K$
of length~$\mu$.
\begin{enumerate}
\item[(a)] The map $\HFa_\X: \ZZ \To \ZZ$ given by $i\mapsto 
\dim_K(F_i R_\X)$ is called the {\bf affine Hilbert function} of~$\X$.
It is a monotonously increasing function which satisfies 
$\HFa_\X(i)=\mu$ for $i\gg 0$. Here the number 
$$
\ri(R_\X)= \min \{i\in\ZZ \mid 
\HFa_\X(j)=\mu \hbox{\ for all\ }j\ge i\}
$$ 
is called the {\bf regularity index} of~$\X$.

\item[(b)] The first difference function $\Delta\HFa_\X (i) = 
\HFa_\X(i) - \HFa_\X(i-1)$ of~$\HFa_\X$ is called the
{\bf Castelnuovo function} of~$\X$, and $\Delta_\X = 
\Delta \HFa_\X(\ri(R_\X))$ is called the {\bf last difference} of~$\X$.

\item[(c)] Given an order ideal $\OO=\{t_1,\dots,t_\mu\}$ in~$\mathbb{T}^n$
and a number $i\ge 0$, we let $h_i=\# \, \{ j\in \{1,\dots,\mu\}\mid \deg(t_j)=i \}$. 
Then $\HF_\OO = (h_0,h_1,\dots)$ is called the {\bf Hilbert function} of~$\OO$,
and $\HFa_\OO = (h_0, h_0+h_1, h_0+h_1+h_2,\dots)$ is called the 
{\bf affine Hilbert function} of~$\OO$. In other words, we have
$\HF_{\OO}=\Delta \HFa_\OO$.
\end{enumerate}
\end{definition}

The affine Hilbert function of~$\X$ satisfies $\HFa_\X(i)=0$ for
$i<0$ and
$$
1 = \HFa_\X(0) < \HFa_\X(1) < \cdots < \HFa_\X(\ri(R_\X)) = \mu
$$
as well as $\HFa_\X(i)=\mu$ for $i\ge \ri(R_\X)$. 
Notice that, in general, the affine Hilbert function of~$\OO$
differs from the affine Hilbert function of~$\X$, even
if~$\OO$ represents a $K$-basis of~$R_\X$. We will come back to 
this point later (see Section~\ref{The Degree Filtered Border Basis Scheme BODF}).

One application of the degree filtration on~$R_\X$ is the
possibility of passing to the degree forms of polynomials
and to reduce many considerations to the homogeneous case.
Recall that the {\bf degree form} of a polynomial 
$f\in P\setminus \{0\}$ is its homogeneous component of highest
degree and is denoted by $\DF(f)$.
Given an ideal $I\subseteq P$, we let
$\DF(I)= \langle \DF(f) \mid f\in I \setminus \{0\}\rangle$
be the {\bf degree form ideal} of~$I$.
Passage to the ring $R_\X$ and the induced filtration~$\F$
leads to the following notions.

\begin{definition}
Let $\X$ be a 0-dimensional subscheme of~$\AA^n_K$.
\begin{enumerate}
\item[(a)] For an element $f\in R_\X \setminus \{0\}$ of order
$d=\ord_\F(f)$, the residue class
$\LF(f) = f + F_{d-1} R_\X$ in $\grF(R_\X)$ is called the 
{\bf leading form} of~$f$ with respect to~$\F$.

\item[(b)] The ring $\gr_\F(R_\X)= \bigoplus_{i\in\ZZ} F_i R_\X / 
F_{i-1} R_\X$ is called the {\bf associated graded ring} of~$R_\X$
with respect to~$\F$.
\end{enumerate}
\end{definition}

In our setting the associated graded ring
$\grF(R_\X)$ is a 0-dimensional local ring with maximal
ideal $\langle \bar{x}_1,\dots,\bar{x}_n\rangle$. Its $K$-vector space
dimension is given by
$\dim_K(\grF(R_\X))= \sum_{i=0}^ \infty \dim_K(F_i R_\X / F_{i-1}R_\X)
=\dim_K(R_\X)$. Every non-zero homogeneous element of~$\grF(R_\X)$ is of the
form $\LF(f)$ for some $f\in R_\X \setminus\{0\}$.
For the algorithms in the later sections, the most important
property of the associated graded ring is that it can be computed 
explicitly via the formula $\grF(R_\X) \cong P/\DF(I_\X)$.

\bigbreak
%
%

\section{The Border Basis Scheme}
\label{The Border Basis Scheme}

In the following we let~$K$ be a field,
and we let $\OO=\{t_1,\dots,t_\mu\}$ be an order
ideal in~$\mathbb{T}^n$. The $\OO$-border basis scheme
is a moduli scheme which parametrizes all 0-dimensional ideals having
an $\OO$-border basis.
Let us recall its definition and its defining ideal.

\begin{definition}\label{borderbasisfamily}
Let $\OO=\{t_1,\dots,t_\mu\}$ be an order ideal, and let
$\partial\OO=\{b_1,\dots,b_\nu\}$ be the border of~$\OO$.
\begin{enumerate}
\item[(a)] Let $C=\{c_{ij} \mid i\in\{1,\dots,\mu\}, j\in\{1,\dots,\nu\}\}$ 
be a set of new indeterminates, and let $K[C]=K[c_{11},\dots,c_{\mu\nu}]$.
Then the set of polynomials $G=\{g_1,\dots,g_\nu\}$ in $K[C][x_1,\dots,x_n]$, 
where
$$
g_j \;=\; b_j - c_{1j} t_1 - \cdots - c_{\mu j} t_\mu
$$
for $j=1,\dots,\nu$, is called the {\bf generic $\OO$-border prebasis}.

\item[(b)] For $r=1,\dots,n$, the matrix
$\A_r = (a_{ij}^{(r)}) \in \Mat_\mu(K[C])$, where
$K[C]=K[c_{11},\dots,c_{\mu\nu}]$ and
$$
a_{ij}^{(r)} \;=\; \begin{cases}
\delta_{im} & \hbox{\ \rm if\ } x_r t_j = t_m\\
c_{im} & \hbox{\ \rm if\ } x_r t_j = b_m
\end{cases}
$$
is called the $r$-th {\bf generic multiplication matrix} for~$\OO$.

\item[(c)] Consider the ideal in $K[C]$ which is
generated by all entries of the commutator matrices
$\A_r \A_s - \A_s \A_r$
with $1\le r<s\le n$. Then the subscheme of $\mathbb{A}^{\mu\nu}_K
= \Spec(K[C])$ defined by this ideal is called the
{\bf $\OO$-border basis scheme}. It is denoted by~$\BO$, the 
defining ideal is denoted by~$I(\BO)$, and the corresponding
affine coordinate ring is denoted by
$B_\OO = K[C]/I(\BO)$.

\item[(d)] The ring homomorphism
$$
B_\OO \;\longrightarrow\; U_\mathcal{O} :=
B_\OO[x_1,\dots,x_n] / \langle g_1,\dots,g_\nu\rangle
$$
is called the {\bf universal $\OO$-border basis family}.
\end{enumerate}
\end{definition}

The main reason why $\BO$ is a good moduli space is the following
result (see~\cite{KR4}, Theorem~3.4).

\begin{theorem}
The residue classes of the elements of~$\OO$ are
a $B_\OO$-basis of~$U_\OO$.
\end{theorem}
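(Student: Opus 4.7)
The plan is to construct a natural $B_\OO$-linear surjection
$\psi: B_\OO^\mu \to U_\OO$ sending the standard basis vector $e_i$ to
$\bar{t}_i$, and then to produce a left inverse using the generic
multiplication matrices, which forces $\psi$ to be an isomorphism.
Surjectivity of $\psi$ is the usual border rewriting argument: since each
$g_j = b_j - \sum_i c_{ij} t_i$ vanishes in $U_\OO$, every border term
$b_j$ reduces to $\sum_i c_{ij} \bar{t}_i$, and by induction on degree,
using that $\OO$ is an order ideal so that every term outside $\OO$ has a
border term as a proper divisor, every element of $U_\OO$ can be written
as a $B_\OO$-linear combination of $\bar{t}_1,\dots,\bar{t}_\mu$.

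For injectivity, the crucial input is that the defining ideal $I(\BO)$
is precisely what forces $\A_1,\dots,\A_n$ to commute pairwise in
$\Mat_\mu(B_\OO)$. This commutativity ensures that the assignment
$x_r \mapsto \A_r$ extends to a well-defined ring homomorphism
$\phi: B_\OO[x_1,\dots,x_n] \to \Mat_\mu(B_\OO)$, and one then sets
$\tilde\phi(f) = \phi(f)\cdot e_1 \in B_\OO^\mu$. The heart of the
argument is to show, by induction on $\deg(t_i)$, that
$\phi(t_i)\cdot e_1 = e_i$ for every $t_i \in \OO$: the induction step
exploits the fact that any $t_i\ne 1$ in the order ideal $\OO$ admits a
factorization $t_i = x_r t_{i'}$ with $t_{i'}\in\OO$, and the definition
of $\A_r$ then gives $\A_r e_{i'} = e_i$. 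Applied to a border term
$b_j = x_r t_\ell$ (with $t_\ell\in\OO$), the same computation yields
$\phi(b_j)\cdot e_1 = \A_r e_\ell = \sum_i c_{ij} e_i =
\tilde\phi\bigl(\sum_i c_{ij} t_i\bigr)$, so $\tilde\phi(g_j)=0$ for
every $j$. Hence $\tilde\phi$ descends to a $B_\OO$-linear map
$\bar\phi: U_\OO \to B_\OO^\mu$ with $\bar\phi(\bar{t}_i) = e_i$, and
since $\bar\phi\circ\psi = \id_{B_\OO^\mu}$ the map $\psi$ is injective
and therefore an isomorphism.

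The main obstacle I expect is verifying carefully that $\phi$ is genuinely
well-defined as a ring homomorphism. A monomial $x_1^{a_1}\cdots x_n^{a_n}$
corresponds to a product of matrices $\A_r$ whose value must not depend on
how the factors are ordered, and this is exactly the bookkeeping that the
commutator equations cutting out $\BO$ allow us to control; a second, more
minor source of ambiguity is that a border term may admit several
factorizations $b_j = x_r t_\ell$, but the resulting value $\A_r e_\ell$ is
in each case determined by the index $j$ alone, via the definition of the
entries of $\A_r$. Once these two coherence issues are handled, everything
reduces to the two straightforward inductions sketched above, and no
further input is required.
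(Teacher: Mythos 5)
Your argument is, in substance, the standard proof of this result: the paper itself does not prove the theorem but quotes it from \cite{KR4}, Theorem~3.4, and the proof there rests on exactly the two pillars you use, namely generation by border rewriting and injectivity via the module structure that the commuting generic multiplication matrices put on $B_\OO^\mu$. Your injectivity half is sound: since the entries of the commutators generate $I(\BO)$, the matrices $\A_1,\dots,\A_n$ commute in $\Mat_\mu(B_\OO)$, so $x_r\mapsto\A_r$ gives a well-defined homomorphism $\phi$; the induction $\phi(t_i)\cdot e_1=e_i$ works because $\OO$ is factor-closed; your remark that the value at a border term depends only on~$j$, not on the chosen factorization $b_j=x_rt_\ell$, is correct; and $\tilde\phi$ kills the whole ideal $\langle g_1,\dots,g_\nu\rangle$, not merely the generators, because $\phi(hg_j)\cdot e_1=\phi(h)\bigl(\phi(g_j)\cdot e_1\bigr)=0$ — a one-line point you should state explicitly when writing this up.

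The one step that needs repair is the surjectivity induction. You induct on degree after rewriting a border divisor $b_j$ inside a term, but the generic $\OO$-border prebasis is not degree filtered: terms $t_i\in\OO$ with $\deg(t_i)>\deg(b_j)$ do occur in $g_j$ (this is precisely why $\BOdf$ is in general a proper subscheme of $\BO$, cf.\ Section~5), so replacing $u\,b_j$ by $\sum_i c_{ij}\,u\,t_i$ can raise the degree and your induction measure does not decrease. The claim is of course true and the fix is easy: either induct on degree by peeling off one indeterminate at a time — write $t=x_rt'$ with $\deg(t')=\deg(t)-1$, apply the inductive hypothesis to $t'$, and use that $x_rt_i\in\OO\cup\partial\OO$, so the $B_\OO$-span of the residue classes of~$\OO$ contains $1$ and is stable under multiplication by each $\bar{x}_r$, hence is all of $U_\OO$ — or invoke the border division algorithm, whose termination is controlled by the index with respect to~$\OO$ rather than by the degree.
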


Thus the universal $\OO$-border basis family
is flat and parametrizes all $\OO$-border bases.
Its $K$-rational points correspond to $\OO$-border bases
in the following way.

\begin{definition}
Let $\Gamma=(\gamma_{ij})\in K^{\mu\nu}$
be a $K$-rational point of~$\BO$. 
Then the polynomials $g_j(x_1,\dots,x_n,\gamma_{11},\dots,
\gamma_{\mu\nu})$ with $j\in\{1,\dots,\nu\}$ form an $\OO$-border
basis. Let $I_\Gamma$ be the ideal in~$P$ which is generated
by these polynomials. 
Then the 0-dimensional scheme $\X_\Gamma$ in~$\AA^n_K$
defined by $I_\Gamma$ is called the 0-dimensional scheme
{\bf represented by~$\Gamma$}.

Conversely, given a 0-dimensional scheme~$\X$ in~$\AA^n_K$
whose vanishing ideal $I_\X$ has an $\OO$-border basis,
the coefficients of that $\OO$-border basis define a
$K$-rational point $\Gamma_\X$ of~$\BO$. We say that the
point $\Gamma_\X$ {\bf represents} the 0-dimensional scheme~$\X$
in~$\BO$.
\end{definition}

Using this terminology, our goal in the next sections is to
describe the subsets of~$\BO$ whose $K$-rational points
represent the 0-dimensional schemes that are locally Gorenstein,
strict Gorenstein schemes, Cayley-Bacharach and strict
Cayley-Bacharach schemes, or strict complete intersections.
Notice that these subsets will frequently be neither the set 
of $K$-rational points of an open
nor of a closed set in the Zariski topology, but merely 
the set of $K$-rational points of 
a {\bf constructible} subset of~$\BO$. As the case may be, we
shall try to describe their structure as explicitly as possible.

An important property of~$I(\BO)$ is that
it is homogeneous with respect to the following grading.

\begin{definition}
Let $\OO=\{t_1,\dots,t_\mu\}$ be an order ideal, let
$\partial\OO=\{b_1,\dots,b_\nu\}$ be the border of~$\OO$, and let
$C=\{c_{ij} \mid i\in\{1,\dots,\mu\},\, j\in\{1,\dots,\nu\}\}$
be the set of indeterminates representing the coefficients of the generic
$\OO$-border prebasis. Then the $\ZZ$-grading on $K[C]$ defined
by  $\deg(c_{ij})=\deg(b_j)-\deg(t_i)$
for $i=1,\dots,\mu$ and $j=1,\dots,\nu$ is called the
{\bf total arrow degree}.
\end{definition}

The name of this grading derives from the fact that we may view
$c_{ij}$ as an {\bf arrow} pointing from~$b_j$ to~$t_i$ as
in~\cite{Hai}, p.~210 and~\cite{Hui2}, Section~3. The next proposition
shows why the total arrow degree is useful for us.

\begin{proposition}
The ideal $I(\BO)$ in~$K[C]$  defining the border basis scheme 
is homogeneous with respect to the total arrow degree.
\end{proposition}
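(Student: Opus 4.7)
The plan is to assign a ``virtual degree'' to every entry of each generic multiplication matrix in such a way that $\A_r$ becomes homogeneous entry-wise, and then observe that the matrix products $\A_r\A_s$ and $\A_s\A_r$ preserve this homogeneity so that the entries of their commutator are homogeneous in $K[C]$ with respect to the total arrow degree. Since $I(\BO)$ is by definition generated by these commutator entries, it will follow that it is a homogeneous ideal.

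More concretely, I would extend the grading to $K[C][x_1,\dots,x_n]$ by setting $\deg(x_r)=1$. First I would note that for every $j$ and every $i$, the monomial $c_{ij}t_i$ has degree $(\deg(b_j)-\deg(t_i))+\deg(t_i)=\deg(b_j)$, so that each generic $\OO$-border prebasis element $g_j=b_j-\sum_{i=1}^{\mu} c_{ij}t_i$ is homogeneous of degree $\deg(b_j)$. Next I would define the \emph{virtual degree} of the $(i,j)$-entry of $\A_r$ to be
$$
d^{(r)}_{ij} \;=\; 1+\deg(t_j)-\deg(t_i).
$$
By the case distinction in the definition of $a^{(r)}_{ij}$, one checks that the entry is either zero or homogeneous of degree $d^{(r)}_{ij}$: if $x_rt_j=t_m$ and $i=m$, then $\deg(t_i)=\deg(t_m)=1+\deg(t_j)$, so $d^{(r)}_{ij}=0$ and the entry is the constant $1$; if $x_rt_j=b_m$, then by the total arrow degree $\deg(c_{im})=\deg(b_m)-\deg(t_i)=1+\deg(t_j)-\deg(t_i)=d^{(r)}_{ij}$.

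Having set this up, the key step is to check that matrix multiplication respects the virtual grading. The $(i,j)$-entry of $\A_r\A_s$ is a sum $\sum_{k=1}^{\mu} a^{(r)}_{ik}\,a^{(s)}_{kj}$, in which each non-zero summand is homogeneous of degree
$$
(1+\deg(t_k)-\deg(t_i))+(1+\deg(t_j)-\deg(t_k)) \;=\; 2+\deg(t_j)-\deg(t_i),
$$
independent of the summation index $k$. The same computation works for $\A_s\A_r$. Therefore the $(i,j)$-entry of $\A_r\A_s-\A_s\A_r$ is either zero or homogeneous in $K[C]$ of degree $2+\deg(t_j)-\deg(t_i)$ for every $1\le r<s\le n$.

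Since $I(\BO)$ is generated by these commutator entries, it is generated by elements that are homogeneous with respect to the total arrow degree, hence it is a homogeneous ideal. The only genuine subtlety is the bookkeeping in the case distinction defining $a^{(r)}_{ij}$ (making sure that both cases yield the same virtual degree), but this is essentially forced by the chosen degrees of the arrows, so I do not anticipate any real obstacle.
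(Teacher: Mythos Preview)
Your proof is correct and follows essentially the same approach as the paper: both extend the grading to $K[C][x_1,\dots,x_n]$ with $\deg(x_r)=1$, observe that the generic border prebasis is homogeneous, and then track degrees through the multiplication matrices to see that the commutator entries are homogeneous. The only difference is presentational: the paper phrases the key step by saying that each $\A_r$ represents a degree-zero homogeneous $B_\OO$-linear map $\bigoplus_j B_\OO(-\deg(t_j)-1)\to\bigoplus_j B_\OO(-\deg(t_j))$, which unwound to the level of matrix entries is exactly your ``virtual degree'' $d^{(r)}_{ij}=1+\deg(t_j)-\deg(t_i)$.
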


\begin{proof}
First we note that the generic $\OO$-border prebasis~$G$ is
homogeneous with respect to the total arrow degree, if we let
$\deg(x_i)=1$ for $i=1,\dots,n$, as usual.
Hence the universal family $U_\OO= K[C][x_1,\dots,x_n]/\langle G\rangle$
is a graded ring with respect to this grading.
Now we observe that~$\OO$ is a homogeneous $B_\OO$-basis of~$U_\OO$.
For $i=1,\dots,n$, the generic multiplication
matrix~$\A_i$ expresses the multiplication by~$x_i$
in this basis. Thus it yields a homogeneous $B_\OO$-linear map
$\mu_{x_i}:\; \bigoplus_{j=1}^\mu B_\OO(-\deg(t_j)-1) \longrightarrow
\bigoplus_{j=1}^\mu B_\OO(-\deg(t_j))$ of degree zero.
Consequently, the commutators $\A_k \A_\ell -
\A_\ell \A_k$ are homogeneous $B_\OO$-linear
maps of degree zero, and their entries are homogeneous polynomials with
respect to the total arrow degree, as was to be shown.
\end{proof}

In~\cite{Sip}, Prop.~3.2.6, it is shown that~$I(\BO)$ is even homogeneous
with respect to the $\ZZ^n$-grading $\deg(c_{ij})=\log(b_j)-\log(t_i)$
which is called the {\bf arrow grading}. This yields another proof for 
the above proposition. Let us check it in a concrete case.

\begin{example}\label{1-x-y-xy}
Let $K$ be a field, let $P=K[x,y]$, and let $\OO= \{1, x, y, xy\}$. 
Then the border of~$\OO$ is given by
$\partial\OO=\{x^2,\,y^2,\, x^2y,\, xy^2\}$. 
The generic multiplication matrices are
$$
\A_x =
\left( \begin{array}{cccc}
0 & c_{11} & 0 & c_{13} \\
1 & c_{21} & 0 & c_{23} \\
0 & c_{31} & 0 & c_{33} \\
0 & c_{41} & 1 & c_{43} \end{array}\right)
\hbox{\quad and \quad }
\A_y=
\left( \begin{array}{cccc}
0 & 0 & c_{12} & c_{14} \\
0 & 0 & c_{22} & c_{24} \\
1 & 0 & c_{32} & c_{34} \\
0 & 1 & c_{42} & c_{44} \end{array}\right)
$$
Consequently, the defining ideal $I(\B_{\OO})$ 
of the border basis scheme $\BO$ is generated by the entries of
$\A_x \A_y - \A_y \A_x$, i.e., by the polynomials 
$$
\left.\begin{array}{ll}
\{\;
  c_{11}c_{22} +c_{13}c_{42} -c_{14},&
  c_{11}c_{24} -c_{12}c_{33} -c_{14}c_{43} +c_{13}c_{44}, \\
  c_{12}c_{31} +c_{14}c_{41} -c_{13},&
  c_{21}c_{22} +c_{23}c_{42} +c_{12} -c_{24},\\ 
  c_{21}c_{24} -c_{22}c_{33} -c_{24}c_{43} +c_{23}c_{44} +c_{14},\quad &
  c_{22}c_{31} +c_{24}c_{41} -c_{23},\\
  c_{22}c_{31} +c_{33}c_{42} -c_{34},&
  c_{22}c_{41} +c_{42}c_{43} +c_{32} -c_{44},\\
  c_{24}c_{31} -c_{32}c_{33} -c_{34}c_{43} +c_{33}c_{44} -c_{13},&
  c_{31}c_{32} +c_{34}c_{41} +c_{11} -c_{33},\\
  c_{31}c_{42} +c_{41}c_{44} +c_{21} -c_{43},&
  c_{33}c_{42} -c_{24}c_{41} +c_{23} -c_{34}
\; \}
 \end{array}\right.
$$
The degree tuple of the total arrow degree is
$$
(\deg(c_{11}), \deg(c_{12}), \dots, \deg(c_{44})) \;=\; 
(2,2,3,3,\; 1,1,2,2,\; 1,1,2,2,\; 0,0,1,1)
$$
and it is easy to verify that the above polynomials are indeed
homogeneous with respect to the grading it defines.
As shown in~\cite{KR4}, Example~3.8, the scheme~$\BO$
is isomorphic to an 8-dimensional affine space over~$K$.
\end{example}

\bigbreak
%
%

\section{The Locally Gorenstein Locus}
\label{The Locally Gorenstein Locus}

The first subscheme of the border basis scheme which we want
to describe explicitly is the open subscheme parametrizing 0-dimensional
locally Gorenstein schemes. Recall that a local ring~$S$ is said to be
{\bf Gorenstein} if its {\bf socle}, i.e., the annihilator
of its maximal ideal~$\mathfrak{n}$ is a 1-dimensional 
$S/\mathfrak{n}$-vector space.
For equivalent definitions, see for instance~\cite{E}, Ch.~21.

\begin{definition}
Let $\X$ be a 0-dimensional subscheme of~$\AA^n_K$, let
$I_\X=\Q_1\cap \cdots \cap \Q_s$ be the primary decomposition
of its vanishing ideal in~$P$, and let $\q_i$ be the image 
of~$\Q_i$ in~$R_\X=P/I_\X$ for $i=1,\dots,s$.
\begin{enumerate}
\item[(a)] The ring~$R_\X$ is called a {\bf locally Gorenstein ring}
if $R_\X/\q_i$ is a local Gorenstein ring for $i=1,\dots,s$.

\item[(b)] The scheme~$\X$ is said to be {\bf locally Gorenstein}
if~$R_\X$ is a locally Gorenstein ring.
\end{enumerate}
\end{definition}

Sometimes locally Gorenstein schemes are simply called Gorenstein,
if no confusion can arise.
An algorithm for checking the locally Gorenstein property
was given in~\cite{KLR1}, Alg.\ 5.4.

Now we let $\OO=\{t_1,\dots,t_\mu\}$ be an order ideal in~$\mathbb{T}^n$.
Our goal is to describe the following subset 
of the border basis scheme~$\BO$.

\begin{definition}
The set of all $K$-rational points $\Gamma=(\gamma_{ij})
\in K^{\mu\nu}$ of the border basis scheme~$\BO$
which represent a locally Gorenstein 0-dimensional scheme~$\X_\Gamma$ 
is called the {\bf set of locally Gorenstein points} of~$\BO$. 
\end{definition}

Based on Algorithm~5.4 in~\cite{KLR1}, we can describe
the set of locally Gorenstein points of~$\BO$. As the following
algorithm shows, it is the set of $K$-rational points
of an open subscheme  $\LGor_\OO$ of~$\BO$ which we call
the {\bf locally Gorenstein locus} in~$\BO$.

\begin{algorithm}{\upshape\bf (Computing the Locally Gorenstein 
Locus)}\label{alg:GorLoc}\\
Let $\mathcal{O}=\{t_1,\dots,t_\mu\}$ be an order ideal
in~$\mathbb{T}^n$. Consider the following sequence of instructions.
\begin{enumerate}
\item[(1)] Determine the generic multiplication matrices 
$\A_1,\dots,\A_n$ for~$\OO$ in the ring
$\Mat_\mu(K[C])$.

\item[(2)] Calculate the commutators $\A_r \A_s -
\A_s \A_r$ for $1\le r<s\le n$ and form
the ideal $I(\BO)$ in $K[C]$ generated by their entries.

\item[(3)] Introduce new indeterminates $z_1,\dots,z_\mu$
and construct the matrix $D$ in $\Mat_\mu(K[C][z_1,\dots,z_\mu])$
whose $i$-th column is given by
$$
t_i(\A_1\tr,\dots, \A_n\tr) \cdot (z_1,\dots,z_\mu)\tr
$$
for $i=1,\dots,\mu$.

\item[(4)] Compute $\det(D)$ in $K[C][z_1,\dots,z_\mu]$, and
let~$J$ be the ideal in~$K[C]$ generated by the coefficients
of $\det(D)$ with respect to the indeterminates $z_1,\dots,z_\mu$.

\item[(5)] Return the ideal $I(\BO) + J$.
\end{enumerate}
This is an algorithm which computes an ideal in~$K[C]$
which defines a closed subscheme $\NonLGor_\OO$ of~$\BO$
such that the set of $K$-rational points of the complement
$\BO\setminus \NonLGor_\OO$ is precisely the set of
locally Gorenstein points of~$\BO$.
\end{algorithm}

\begin{proof}
Let us apply~\cite{KLR1}, Alg.\ 5.4, where we use
the basis $B=(\bar{t}_1,\dots,\bar{t}_\mu)$ of~$R_\X$.
For a fixed $K$-rational point $\Gamma=(\gamma_{ij})$
of~$\BO$, the corresponding ideal $I_\Gamma$ defines a locally 
Gorenstein scheme if and only if the polynomial obtained by substituting
$(\gamma_{ij})$ for $(c_{ij})$ in~$\det(D)$ is non-zero.
Thus the non-locally Gorenstein locus is defined by equating all
coefficients of $\det(D)$ to zero, and the claim follows.
\end{proof}

In view of this algorithm we see that the locally Gorenstein locus
$\LGor_\OO = \BO \setminus \NonLGor_\OO$ is an open subscheme of~$\BO$.
Let us compute this locus in the setting of Example~\ref{1-x-y-xy}.

\begin{example}\label{1-x-y-xy-cont}
For the order ideal $\mathcal{O}=\{1,x,y,xy\}$, 
let us compute the ideal defining the non-locally
Gorenstein locus in~$\BO$. The ideal $I(\BO)$ was
computed in Example~\ref{1-x-y-xy}.
Let $\mathcal{Z} = (z_1, z_2, z_3, z_4)\tr$, and let~$D$ be the matrix
$D = (\mathcal{Z},\; \mathcal{A}\tr_x\mathcal{Z},\;   
\mathcal{A}\tr_y\mathcal{Z},\; \mathcal{A}\tr_x \mathcal{A}\tr_y\mathcal{Z})$.
Its four columns are 
{\small
$$
\mathcal{Z},\quad
\left(\begin{array}{llll}
0 & 1 & 0 & 0\\
c_{11} & c_{21} & c_{31} & c_{41}\\
0 & 0 & 0 & 1\\
c_{13} & c_{23} & c_{33} & c_{43}\
\end{array}\right)
\mathcal{Z},\quad
\left(\begin{array}{llll}
0 & 0 & 1 & 0\\
0 & 0 & 0 & 1\\
c_{12} & c_{22} & c_{32} & c_{42}\\
c_{14} & c_{24} & c_{34} & c_{44}
\end{array}\right)
\mathcal{Z}, \quad
\left(\begin{array}{llll}
0 & 0 & 0 & 1\\
p_1 & p_2 & p_3 & p_4\\
c_{14} & c_{24} & c_{34} & c_{44}\\
q_1 & q_2 & q_3 & q_4
\end{array}\right)
\mathcal{Z}$$
}%
where $p_1 = c_{12}c_{31} + c_{14}c_{41}$, 
$p_2 = c_{22}c_{31} + c_{24}c_{41}$, 
$p_3 = c_{31}c_{32} + c_{34}c_{41}+c_{11}$, 
$p_4 = c_{31}c_{42}+c_{41}c_{44}+c_{21}$,
$q_1 = c_{12}c_{33}+c_{14}c_{43}$, 
$q_2 = c_{22}c_{33}+c_{24}c_{43}$,
$q_3 = c_{32}c_{33}+c_{34}c_{43}+c_{13}$, 
and $q_4 = c_{33}c_{42}+c_{43}c_{44}+c_{23}$.

The determinant of~$D$ is a polynomial 
\begin{align*}
\det(D) \;=&\;  (-c_{12}^2 c_{13}c_{31} + c_{11}c_{12}^2c_{33} 
-c_{12}c_{13}c_{14}c_{41} +c_{11}c_{12}c_{14}c_{43} -c_{11}c_{14}^2)\, z_1^4
+ \cdots\\ 
&\; \cdots +  (-c_{41}c_{42} +1)\, z_4^4
\end{align*}
in $K[C][z_1,z_2,z_3,z_4]$
which is homogeneous of degree 4 with respect to $z_1,\dots,z_4$
and has 35 non-zero coefficients in~$K[C]$.
Let~$J$ be the ideal generated by these coefficients.
Then the set $\NonLGor(\OO)$ is defined by the ideal
$I(\BO)+J$. 

Here we can compute a Gr\"obner basis of~$I(\BO)+J$
and check that we have $\dim(K[C]/(I(\BO)+J))=4$.
Hence $\NonLGor(\OO)$ is the set of closed points of a
4-dimensional closed subscheme of the 8-dimensional scheme~$\BO$.
\end{example}

\bigbreak
%
%

\section{The Degree Filtered Border Basis Scheme $\BOdf$}
\label{The Degree Filtered Border Basis Scheme BODF}

As before, we let $K$ be a field, $P=K[x_1,\dots,x_n]$,
and $\OO=\{t_1,\dots,t_\mu\}$ an order ideal in~$\mathbb{T}^n$
with border $\partial\OO=\{b_1,\dots,b_\nu\}$.
In this section we focus on a subscheme of
the border basis scheme~$\BO$ which parametrizes
those schemes~$\X$ for which~$\OO$ yields a particularly
nice $K$-basis of~$R_\X$. More precisely, recall that the degree 
filtration $(F_i P)_{i\in\mathbb{Z}}$ on~$P$
is defined by $F_i P = \{ f\in P \mid \deg(f) \le i\} \;\cup\; \{0\}$.
For every $i\in\mathbb{Z}$, let $F_iI=F_iP\cap I$, and let
$F_iR = F_iP/F_iI$. Then the family $(F_iI)_{i\in\mathbb{Z}}$
is called the {\bf induced filtration} on~$I$, and the family
$\mathcal{F}= (F_iR)_{i\in\mathbb{Z}}$
is a $\mathbb{Z}$-filtration on~$R$
which is called the {\bf degree filtration} on~$R$.
For more details on filtrations, see for instance~\cite{KLR1} and 
Section 6.5 of~\cite{KR2}.

\goodbreak
\begin{definition}
Let $\X$ be a 0-dimensional subscheme of~$\AA^n_K$
with vanishing ideal $I_\X \subseteq P$ and affine
coordinate ring $R_\X=P/I_\X$.
\begin{enumerate}
\item[(a)]  A tuple $B=(\bar{t}_1,\dots,\bar{t}_\mu) \in R_\X^\mu$ 
is called a {\bf degree filtered $K$-basis} of~$R_\X$ if
the set $F_iB = B\cap F_i R_\X$ is a $K$-basis of~$F_i R_\X$
for every $i\in\ZZ$ and if $\ord_\F(\bar{t}_1) \le \cdots 
\le \ord_\F(\bar{t}_\mu)$.

\item[(b)] Given an order ideal $\OO=\{t_1,\dots,t_\mu\}$
in~$\mathbb{T}^n$, we say that~$\X$ has a {\bf degree filtered
$\OO$-border basis} if the tuple of residue classes
$(\bar{t}_1,\dots,\bar{t}_\mu)$ is a degree filtered $K$-basis 
of~$R_\X$.
\end{enumerate}
\end{definition}

For the sake of completeness, the following example borrowed from~\cite{KLR1}
provides a monomial $K$-basis which is not degree-filtered.

\begin{example}\label{nondegreefiltered}
Let $K=\mathbb{Q}$, let $P = K[x,y]$,
let $I$ be the vanishing ideal of the affine set
of eight points given by
$p_1=(1,-1)$, $p_2 =(0,2)$, $p_3=(1,1)$,
$p_4 =(1,2)$, $p_5=(0,1)$, $p_6=(1,3)$,
$p_7 = (2,4)$, and $p_8 =(3,4)$, and let $R=P/I$.
The reduced Gr\"{o}bner basis of~$I$ with respect to
\texttt{DegRevLex} is
\begin{gather*}
\{ \; x^{2}y -4x^{2} - xy + 4x,\quad 
x^{3} + xy^{2} -6x^{2} -3xy - y^{2} + 7x + 3y -2,\\
y^{4} -10xy^{2} -5y^{3} + 15x^{2} + 30xy + 15y^{2} -35x -25y + 14,\\
xy^{3} -7xy^{2} - y^{3} + 14xy + 7y^{2} -8x -14y + 8\;\}
\end{gather*}
Since this term ordering is degree compatible,
the residue classes of the elements in the tuple
$(1, y, x, y^2, xy, x^2, y^3, xy^2)$
form a degree-filtered $K$-basis of~$R$
with order tuple $(0,1,1,2,2,2,3,3)$.
On the other hand, the reduced Gr\"{o}bner basis of~$I$ 
with respect to~\texttt{Lex} is
\begin{gather*}
\{\, x^{2} -\tfrac{2}{3}xy^{2} + 2xy -\tfrac{7}{3}x +
\tfrac{1}{15}y^{4} -\tfrac{1}{3}y^{3}
+ y^{2} -\tfrac{5}{3}y + \tfrac{14}{15},\\
xy^{3} -7xy^{2} + 14xy -8x - y^{3} + 7y^{2} -14y + 8,\,
y^{5} -9y^{4} + 25y^{3} -15y^{2} -26y + 24 \,\}
\end{gather*}
So, the residue classes of the elements in the tuple
$B=(1, y, x,  y^2, xy, y^3, xy^2, y^4)$ form a $K$-basis of~$R$.
Since $\bar{y}^4 = 10\bar{x}\bar{y}^2
+ 5\bar{y}^3 - 15\bar{x}^2 - 30\bar{x}\bar{y}
- 15\bar{y}^2 + 35\bar{x} + 25\bar{y} - 14$, 
we have $\ord_\Fbar(\bar{y}^4)=3$.
Altogether, we see that~$B$ is not a degree-filtered basis, since
its order tuple is $(0,1,1,2,2,3,3,3)$.
\end{example}

In addition to these properties, recall that 
$\bar{t}_1=1$ in each degree filtered $K$-basis of~$R_\X$
by Assumption~\ref{orderOO}.
When~$\OO$ is a degree filtered $K$-basis of~$R_\X$, the Hilbert 
function of~$\OO$ agrees with the Castelnuovo function of~$\X$.
For a discussion of these notions, we refer to~\cite{KLR2}, Section~5.

Using this terminology, the degree filtered border basis scheme
is a subscheme of~$\BO$ which
parametrizes all 0-dimensional ideals in~$P$
which have a degree filtered $\OO$-border basis.
The following proposition provides an explicit description
of this subscheme.

\begin{proposition}\label{CharBOdf}
Let $\OO=\{t_1, \dots, t_\mu\}$ be an order ideal, let
$\partial\OO=\{b_1,\dots,b_\nu\}$ be the border of~$\OO$, 
and let $G=\{g_1,\dots,g_\nu\}$
be the generic $\OO$-border prebasis, where
$g_j = b_j -\sum_{i=1}^\mu c_{ij}\, t_i$.
\begin{enumerate}
\item[(a)] For a $K$-rational point $\Gamma=(\gamma_{ij})$ of~$\BO$,
the 0-dimensional scheme $\X_\Gamma$ represented by~$\Gamma$
has a degree filtered $\OO$-border basis if and only if $\gamma_{ij}=0$
for all $i\in\{1,\dots,\mu\}$ and $j\in\{1,\dots,\nu\}$ such that
$\deg(t_i)>\deg(b_j)$.

\item[(b)] Let $I_\OO\df$ be the ideal in~$K[C]$
generated by all indeterminates $c_{ij}$ such that $\deg(t_i)>
\deg(b_j)$. The $K$-rational points of the closed subscheme~$\BOdf$ 
of~$\BO$ defined by the ideal $I(\BO)+I_\OO\df$ 
represent the 0-dimensional schemes $\X$ in~$\AA^n_K$ 
which have a degree filtered $\OO$-border basis.
\end{enumerate}
\end{proposition}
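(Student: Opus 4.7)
The plan is to establish part (a) first, since part (b) will then follow at once from the definition of $I_\OO\df$: its generators correspond exactly to the coordinate functions $c_{ij}$ with $\deg(t_i)>\deg(b_j)$, so the $K$-rational points of the subscheme of $\BO$ cut out by adding $I_\OO\df$ are precisely those $\Gamma=(\gamma_{ij})$ with $\gamma_{ij}=0$ for all such $(i,j)$.

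For the forward implication in (a), I would observe that, by the definition of the $\OO$-border basis, each specialized polynomial $g_j(\Gamma)=b_j-\sum_i\gamma_{ij}t_i$ lies in $I_\Gamma$, so in $R_{\X_\Gamma}$ we have the identity $\bar b_j=\sum_{i=1}^\mu\gamma_{ij}\,\bar t_i$. Since $b_j$ is a term of degree $\deg(b_j)$, its residue class lies in $F_{\deg(b_j)}R_{\X_\Gamma}$. Under the hypothesis that $\OO$ yields a degree filtered $K$-basis, the set $\{\bar t_i:\deg(t_i)\le\deg(b_j)\}$ is a $K$-basis of $F_{\deg(b_j)}R_{\X_\Gamma}$, so by uniqueness of the expansion of $\bar b_j$ in the full basis $(\bar t_1,\dots,\bar t_\mu)$ every coefficient $\gamma_{ij}$ with $\deg(t_i)>\deg(b_j)$ must vanish.

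The reverse implication is the main substantive step: assuming the vanishing condition on the $\gamma_{ij}$, I have to show that $\{\bar t_j:\deg(t_j)\le d\}$ spans $F_dR_{\X_\Gamma}$ for every $d\ge 0$ (linear independence is automatic, since $\OO$ is a $K$-basis). The idea is that the vanishing assumption is exactly what makes border rewriting degree-non-increasing. I would prove, by induction on $d$, the following claim: for every term $t\in\mathbb{T}^n$ with $\deg(t)\le d$, the residue class $\bar t$ in $R_{\X_\Gamma}$ lies in $V_d:=\text{span}_K\{\bar t_j:\deg(t_j)\le d\}$. The base case $d=0$ is trivial because $t=1=t_1\in\OO$. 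In the inductive step, if $t\in\OO$ there is nothing to show, and otherwise I pick some variable $x_r$ dividing $t$, apply induction to $t'=t/x_r$ to write $\bar t'=\sum_k c_k\bar t_k$ with $\deg(t_k)\le d-1$, and then use $\bar t=\sum_k c_k\,\overline{x_rt_k}$. For each $k$ the term $x_rt_k$ has degree at most $d$; since $t_k\in\OO$, either $x_rt_k\in\OO$ and its residue class is in $V_d$ directly, or $x_rt_k=b_j\in\partial\OO$, in which case the relation $\bar b_j=\sum_l\gamma_{lj}\bar t_l$ together with the hypothesis $\gamma_{lj}=0$ for $\deg(t_l)>\deg(b_j)\le d$ places $\bar b_j$ in $V_d$.

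The hardest part is simply setting up this reduction cleanly; once done, applying the claim to an arbitrary $f\in F_dP$ term by term shows that $F_dR_{\X_\Gamma}\subseteq V_d$, and the reverse inclusion is immediate, so $\{\bar t_j:\deg(t_j)\le d\}$ is indeed a $K$-basis of $F_dR_{\X_\Gamma}$. This yields the degree filtered property, finishing (a), after which (b) is the tautology noted above.
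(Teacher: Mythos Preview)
Your proof is correct and, in fact, considerably more detailed than the paper's own argument: the paper simply writes that part~(a) ``follows immediately from Condition~(d) in~[KLR2], Prop.~5.3'' and that~(b) is a consequence of~(a). What you have done is essentially to reprove the relevant implication of that external result from scratch, giving a self-contained argument via the induction on~$d$ showing that border rewriting is degree-non-increasing under the vanishing hypothesis. This buys independence from the reference, at the cost of a longer write-up.

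One small point worth tightening in the forward direction: when you assert that $\{\bar t_i:\deg(t_i)\le\deg(b_j)\}$ is a $K$-basis of $F_{\deg(b_j)}R_{\X_\Gamma}$, the paper's literal definition of a degree filtered basis says that $B\cap F_dR_\X=\{\bar t_i:\ord_\F(\bar t_i)\le d\}$ is a basis of $F_dR_\X$, which a~priori uses the order rather than the degree of~$t_i$. You are implicitly using $\ord_\F(\bar t_i)=\deg(t_i)$; this does follow (by the counting argument comparing $\HF_\OO$ with the Castelnuovo function, as the paper notes just after this proposition), but a one-line justification would make your forward implication airtight.
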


\begin{proof}
Claim~(a) follows immediately from Condition~(d) 
in~\cite{KLR2}, Prop.~5.3, and~(b) is a consequence of~(a).
\end{proof}

As pointed out by the referee, this proposition shows that $\BOdf$ is the 
positive Bia\l ynicki-Birula decomposition of the scheme~$\BO$.
This follows from the general theory of Bia\l ynicki-Birula decompositions for
affine schemes, see for example~\cite{Dri13}, Subsection~1.3.4 and \cite{JS19}, Prop.~4.5.
Thus the scheme $\BOdf$ is linked to the geometry of the Bia\l ynicki-Birula decomposition 
for the Hilbert scheme, as $\BOdf$ is an open subscheme of it (see~\cite{Dri13}, Lemma 1.4.7
and \cite{JS19}, Prop.~5.2).

Part~(b) of the preceding proposition gives rise to the following definition.

\begin{definition}
Let $\OO=\{t_1,\dots,t_\mu\}$ be an order ideal in~$\mathbb{T}^n$,
let $\partial\OO=\{b_1,\dots,b_\nu\}$ be the border of~$\OO$, and
let $I_\OO\df$ be the ideal in~$K[C]$ generated by all 
indeterminates $c_{ij}$ such that $\deg(t_i)>\deg(b_j)$.
\begin{enumerate}
\item[(a)] The closed subscheme~$\BOdf$ of~$\BO$ defined by 
$I(\BOdf) = I(\BO)+I_\OO\df$ is called the
{\bf degree filtered $\OO$-border basis scheme}.
Its affine coordinate ring is denoted by $B_\OO\df =
K[C]/I(\BOdf)$.

\item[(b)] The set of polynomials $G\df = \{g_1\df,\dots,
g_\nu\df\}$ in~$K[C][x_1,\dots,x_n]$ given by 
$g_j = b_j - \sum_{\{i\mid \deg(t_i)\le\deg(b_j)\}}
c_{ij}\, t_i$  for $j=1,\dots,\nu$ 
is called the {\bf generic degree
filtered $\OO$-border prebasis}.

\item[(c)] The canonical $B_\OO\df$-algebra homomorphism
$$
\Phi\df:\; B_\OO\df \longrightarrow U_\OO\df :=
B_\OO\df[x_1,\dots,x_n]/ \langle G\df\rangle
$$ 
is called the {\bf universal degree filtered $\OO$-border basis family}.
\end{enumerate}
\end{definition}

Since the universal $\OO$-border basis family is a free $B_\OO$-module
with basis~$\OO$ (cf.~\cite{KR4}, Thm.~3.4), it follows by a base change that
the universal degree filtered $\OO$-border basis family is free with basis~$\OO$,
too. The matrices defining the multiplication maps on~$U_\OO\df$
can be obtained as follows.

\begin{remark}\label{MultMatUdf}
For $k=1,\dots,n$, let $\A_k \in \Mat_\mu(K[C])$ be
the $k$-th generic multiplication matrix with respect to~$\OO$
(see Definition~\ref{borderbasisfamily}.b), 
and let $C^{\rm nondf}$ be the set of all
indeterminates $c_{ij}$ such that $\deg(t_i)>\deg(b_j)$, i.e.,
the set of all indeterminates $c_{ij}$ of negative total arrow degree.
\begin{enumerate}
\item[(a)] For $k=1,\dots,n$, let $\A_k\df$
be the matrix obtained from $\A_k$ by setting all
indeterminates in~$C^{\rm nondf}$ equal to zero. Then the matrix
$\A_k\df$
describes the multiplication by~$x_k$ on~$U_\OO\df$
with respect to the basis~$\OO$. The matrices
$\A_1\df, \dots, \A_n\df$ are called the
{\bf generic degree filtered multiplication matrices}
with respect to~$\OO$.

\item[(b)] For any polynomial $f\in P$, the multiplication by~$f$
on~$U_\OO\df$ is given by the matrix $f(\A_1\df,
\dots,\A_n\df)$ with respect to the basis~$\OO$.
\end{enumerate}
\end{remark}

Another useful observation is that the total arrow degree
yields a non-negative grading of~$B_\OO\df$ in the following
sense.

\begin{remark}\label{arrownonneg}
Given the set $C=\{c_{ij}\mid i\in\{1,\dots,\mu\},\; 
j\in\{1,\dots,\nu\}\}$, we let $C\df = \{c_{ij}\in C \mid \deg(t_i)\le \deg(b_j)\}$ 
and $C^{\rm nondf} = C\setminus C\df$. When we set the indeterminates
in~$C^{\rm nondf}$ equal to zero in~$I(\BO)$, we get an ideal
$\bar{I}(\BOdf)$ such that 
$$
B_\OO\df \;=\; K[C]/I(\BOdf) \;\cong\; K[C\df]/\bar{I}(\BOdf) \eqno{(\ast)}
$$
Notice that the indeterminates in~$C\df$ are precisely the
elements of non-negative total arrow degree in~$C$. Hence
the isomorphism $(\ast)$ shows that the total arrow degree
provides a non-negative grading of~$B_\OO\df$.
Another way of phrasing this observation is that the
entries of the commutators of the matrices~$\A_i\df$
are homogeneous polynomials of non-negative degrees with respect
to the total arrow degree.
\end{remark}

The next proposition provides an important family of examples 
of degree filtered border bases. To prove it, we use the
following auxiliary result.

\begin{lemma}
Let $\OO=\{t_1,\dots,t_\mu\}$ be an order ideal
with border $\partial\OO=\{b_1,\dots,b_\nu\}$.
Recall that $\deg(t_1)\le \cdots \le \deg(t_\mu)$, and w.l.o.g.\ let
$\deg(b_1)\le \cdots \le\deg(b_\nu)$. 

Assume that $\deg(t_\mu)>\deg(b_1)$, and let $\Gamma=(\gamma_{ij})\in K^{\mu\nu}$
with $\gamma_{\mu 1}=1$ and $\gamma_{ij}=0$ for $(i,j)\ne (\mu,1)$.
Then~$\Gamma$ represents a 0-dimensional scheme $\X_\Gamma$ in~$\BO$.
\end{lemma}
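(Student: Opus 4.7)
My plan is to verify that $\Gamma$ satisfies the commutator equations defining $\BO$. The evaluation of the generic $\OO$-border prebasis at $\Gamma$ reads $g_1=b_1-t_\mu$ and $g_j=b_j$ for $j=2,\dots,\nu$. By Definition~\ref{borderbasisfamily}(c), it suffices to prove that the specialized generic multiplication matrices $\bar{\A}_r:=\A_r|_\Gamma$ pairwise commute as endomorphisms of $V=K^\mu$, whose basis $e_1,\dots,e_\mu$ is indexed by $\OO$. Reading off Definition~\ref{borderbasisfamily}(b), the map $\bar{\A}_r$ sends $e_j$ to $e_m$ if $x_r t_j=t_m\in\OO$, to $e_\mu$ if $x_r t_j=b_1$, and to $0$ if $x_r t_j=b_k$ with $k>1$.

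The linchpin of the argument is a \emph{no cascading} principle enforced by the hypothesis $\deg(t_\mu)>\deg(b_1)$: for every variable $x_i$, we have $\bar{\A}_i(e_\mu)=0$. Indeed, $t_\mu$ has maximal degree in $\OO$, so $x_i t_\mu\notin\OO$, forcing $x_i t_\mu\in\partial\OO$; and the computation $\deg(x_i t_\mu)=\deg(t_\mu)+1>\deg(b_1)+1>\deg(b_1)$ rules out $x_i t_\mu=b_1$. More generally, for any border term $b_m$ and any $x_i$, the estimate $\deg(x_i b_m)=\deg(b_m)+1\ge\deg(b_1)+1>\deg(b_1)$ forbids $x_i b_m=b_1$. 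Thus a $b_1$-reduction, which is the only mechanism that produces $t_\mu$, can never be triggered a second time in a two-step product.

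Equipped with this principle, I would verify $\bar{\A}_r\bar{\A}_s(e_j)=\bar{\A}_s\bar{\A}_r(e_j)$ by a case analysis on the locations of $x_r t_j$ and $x_s t_j$. When both lie in $\OO$, say as $t_p=x_r t_j$ and $t_q=x_s t_j$, the identity $x_r t_q=x_s t_p$ in $P$ forces both paths to output the same vector (either a common basis element, if the term is in $\OO$, or both $e_\mu$ or both $0$, if the common term is in $\partial\OO$). When exactly one product, say $x_s t_j=b_m$, lies in $\partial\OO$ while $x_r t_j=t_p\in\OO$, the no-cascading principle immediately annihilates $\bar{\A}_r\bar{\A}_s(e_j)$; for the reverse path, $x_s t_p=x_r b_m$ is a border term of degree $\deg(b_m)+1>\deg(b_1)$, hence different from $b_1$, so the output is again $0$. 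When both $x_r t_j$ and $x_s t_j$ lie in $\partial\OO$, they cannot both equal $b_1$ (otherwise $x_r=x_s$, contradicting $r<s$), and the same degree arguments show that both compositions output $0$.

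The main obstacle is purely organizational: the subcases must be enumerated so that every possible interplay of $\OO$ and $\partial\OO$ with the distinguished border term $b_1$ is covered. The degree hypothesis enters in exactly one place, namely the guarantee that any $b_1$-reduction is followed by annihilation; once this is recognized, each subcase reduces to an elementary comparison of monomial multiplications in $P$ and their lookup in $\OO\cup\partial\OO$, so the commutators $\bar{\A}_r\bar{\A}_s-\bar{\A}_s\bar{\A}_r$ vanish and $\Gamma\in\BO$.
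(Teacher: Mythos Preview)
Your proof is correct and complete. The only step that could be spelled out more explicitly is, in the mixed case, why $x_s t_p = x_r b_m$ lies in $\partial\OO$ rather than in $\OO$: this holds because $b_m\notin\OO$ and $\OO$ is divisor-closed, so no multiple of $b_m$ can lie in $\OO$; combined with $t_p\in\OO$ this forces $x_s t_p\in\partial\OO$. You implicitly use this, so the argument stands.

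Your route, however, is genuinely different from the paper's. The paper does not touch the commutator equations at all. Instead it shows directly that the $\OO$-border prebasis $G_\Gamma=(b_1-t_\mu,\,b_2,\dots,b_\nu)$ is an $\OO$-border basis by exhibiting a term ordering~$\sigma$ with $b_1>_\sigma t_\mu$ (such an ordering exists because $b_1\nmid t_\mu$, which follows from $\OO$ being an order ideal) and then verifying Buchberger's Criterion: the only nontrivial S-polynomials $S_{1k}$ turn out to be proper multiples of~$t_\mu$, and these reduce to zero via the monomial generators because every $x_i t_\mu$ lies in $\partial\OO\setminus\{b_1\}$. Macaulay's Basis Theorem then gives that $\OO$ is a $K$-basis of $P/I_\Gamma$. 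The degree hypothesis is used in exactly the same place in both proofs (to rule out $x_i t_\mu=b_1$), but the paper's Gr\"obner-basis argument is shorter and avoids case analysis, while your direct verification of the multiplication-matrix commutators is more self-contained and stays entirely within the border-basis formalism set up in Section~\ref{The Border Basis Scheme}.
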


\begin{proof} To show that the $\OO$-border prebasis
$G_\Gamma = (g_1,\dots,g_\nu) = (b_1-t_\mu, b_2,\dots, b_\nu)$ 
is an $\OO$-border basis,
it suffices to show that the residue classes of the 
elements of~$\OO$ form a $K$-basis of $P/I_\Gamma$, 
where $I_\Gamma=\langle G_\Gamma \rangle$. 

Since~$\OO$ is an order ideal,
we have $b_1 \nmid t_\mu$. Therefore there exists a term ordering~$\sigma$
such that $b_1 >_\sigma t_\mu$. Now we use Buchberger's Criterion
to verify that $G_\Gamma = (b_1-t_\mu, b_2,\dots,b_\nu)$ is a 
$\sigma$-Gr\"obner basis of $I_\Gamma$. 

For this it suffices to show that, for $k=2,\dots,\nu$, the 
$S$-polynomial~$S_{1k}$ satisfies $S_{1k} \stackrel{G}{\rightarrow} 0$.
Using
$$
S_{1k} = \tfrac{{\rm lcm}(b_1,b_k)}{b_1}(b_1-t_\mu) 
-  \tfrac{{\rm lcm}(b_1,b_k)}{b_k} b_k
= -\tfrac{{\rm lcm}(b_1,b_k)}{b_1}t_\mu
$$
and the fact that $\deg(b_1)\le \deg(b_k)$ implies $b_k \nmid b_1$,
we see that $S_{1k}$ is a proper multiple of~$t_\mu$.
Therefore the claim follows from $x_it_\mu\in \partial\OO\setminus\{b_1\}
\subset G_\Gamma$. 
Now we conclude from Macaulay's Basis Theorem 
that $\OO=\OO_\sigma(I_\Gamma)=\mathbb{T}^n \setminus \LT_\sigma(G_\Gamma)$
represents a $K$-basis of $P/I_\Gamma$, and the proof is complete.
\end{proof}

At this point we are ready for the following result.

\begin{proposition}\label{CharMaxdeg}
For an order ideal $\OO=\{t_1,\dots,t_\mu\}$,
the following conditions are equivalent.
\begin{enumerate}
\item[(a)] We have $\BO=\BOdf$.

\item[(b)] For $i=1,\dots,\mu$ and $j=1,\dots,\nu$,
we have $\deg(t_i) \le \deg(b_j)$.

\item[(c)] The order ideal~$\OO$ has the {\bf generic
Hilbert function}, i.e., for $i\ge 0$ we have
$\#\{t_j\in\OO \mid \deg(t_j)\le i\} = 
\min \{ \mu, \# \mathbb{T}^n_{\le i} \}$.

\end{enumerate}
If these conditions are satisfied, we say that~$\OO$
has a {\bf maxdeg border}.
\end{proposition}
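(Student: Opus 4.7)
The plan is to prove the implications in the cycle (a) $\Leftrightarrow$ (b) $\Leftrightarrow$ (c), with the first equivalence coming almost for free from the description of $\BOdf$ in Proposition~\ref{CharBOdf} together with the preceding lemma, and the second from a direct combinatorial analysis of order ideals.

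For (a) $\Leftrightarrow$ (b), I would argue as follows. By Proposition~\ref{CharBOdf}, the scheme $\BOdf$ is cut out inside $\BO$ by the ideal $I_\OO\df$ generated by those indeterminates $c_{ij}$ for which $\deg(t_i) > \deg(b_j)$. If (b) holds, then no such pair $(i,j)$ exists, so $I_\OO\df = (0)$ and $\BOdf = \BO$, giving (a). Conversely, if (b) fails, pick any $(i,j)$ with $\deg(t_i) > \deg(b_j)$; since $\deg(t_i) \le \deg(t_\mu)$ and $\deg(b_1) \le \deg(b_j)$ by our ordering conventions, this forces $\deg(t_\mu) > \deg(b_1)$. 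The preceding lemma then produces a $K$-rational point $\Gamma \in \BO$ with $\gamma_{\mu 1} = 1$ and all other coordinates zero. But $c_{\mu 1} \in I_\OO\df$, so $\Gamma \not\in \BOdf$, whence $\BO \ne \BOdf$.

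For (b) $\Leftrightarrow$ (c), set $d_0 = \deg(t_\mu)$. The key intermediate claim is that (b) is equivalent to the inclusion $\mathbb{T}^n_{< d_0} \subseteq \OO$. Assuming (b), take any term $t' \notin \OO$ and choose a divisor $t''$ of $t'$ that is minimal under divisibility among divisors of $t'$ not lying in $\OO$; since $1 \in \OO$ we have $t'' \ne 1$, so $t'' = x_k \cdot (t''/x_k)$ with $t''/x_k \in \OO$ by minimality, which puts $t''$ in $\partial\OO$. Hence $\deg(t') \ge \deg(t'') \ge d_0$ by (b), proving $\mathbb{T}^n_{< d_0} \subseteq \OO$. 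Conversely, if $\mathbb{T}^n_{< d_0} \subseteq \OO$, then any $b \in \partial\OO \subseteq \mathbb{T}^n \setminus \OO$ must satisfy $\deg(b) \ge d_0 \ge \deg(t_i)$ for every $i$, giving (b). Once this intermediate claim is established, the equivalence with (c) is straightforward: if $\mathbb{T}^n_{< d_0} \subseteq \OO$, then for $i < d_0$ the count $\#\{t_j \in \OO \mid \deg(t_j) \le i\}$ equals $\#\mathbb{T}^n_{\le i}$, which is $\le \mu$ since $\mathbb{T}^n_{\le i} \subseteq \OO$, whereas for $i \ge d_0$ the count is $\mu$ and $\#\mathbb{T}^n_{\le i} \ge \mu$; in both cases the count matches $\min\{\mu, \#\mathbb{T}^n_{\le i}\}$. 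Conversely, (c) forces $\#\{t_j \in \OO \mid \deg(t_j) \le i\} = \#\mathbb{T}^n_{\le i}$ whenever this value is less than $\mu$, hence for all $i < d_0$, yielding $\mathbb{T}^n_{< d_0} \subseteq \OO$.

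The only genuinely delicate step is the implication (a) $\Rightarrow$ (b), where one must produce a concrete $K$-rational point of $\BO$ witnessing that $\BOdf$ is a proper subscheme. This is precisely what the preceding lemma is engineered to supply, so the main work has already been done; the rest of the proposition is a matter of assembling the references and carrying out the combinatorial bookkeeping on degrees.
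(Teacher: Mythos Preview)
Your proof is correct and follows essentially the same approach as the paper. The paper handles (a)$\Rightarrow$(b) via the preceding lemma exactly as you do, cites Proposition~5.3 of~\cite{KLR2} for (b)$\Rightarrow$(a) (which is the content underlying Proposition~\ref{CharBOdf} that you invoke), and dismisses (b)$\Leftrightarrow$(c) as ``clear''; your combinatorial expansion of that last equivalence via the intermediate characterization $\mathbb{T}^n_{<d_0}\subseteq\OO$ is a welcome elaboration but not a different strategy.
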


\begin{proof}
First we show that~(a) implies~(b). By hypothesis, 
every scheme $\X_\Gamma$ represented by a $K$-rational point~$\Gamma$
of~$\BO$ has an $\OO$-border bases which is degree filtered. Suppose 
that there exist terms $t_i\in\OO$ and $b_j\in\partial\OO$
such that $\deg(t_i)>\deg(b_j)$. Then the lemma yields
a 0-dimensional scheme $\X_\Gamma$ which has an $\OO$-border
basis that is not degree filtered, in contradiction
to the hypothesis.

Since (b)$\Leftrightarrow$(c) is clear, it suffices to note
that (b)$\Rightarrow$(a) follows from~\cite{KLR2}, Prop.~5.3
in order to finish the proof.
\end{proof}

Degree filtered border bases also arise naturally
from degree compatible term orderings as follows.

\begin{example}
Let $\X$ be a 0-dimensional scheme in~$\AA^n_K$, and let~$\sigma$
be a degree compatible term ordering. Then~$\X$ has a degree filtered
border basis with respect to $\mathcal{O}_\sigma(I_\X)=
\mathbb{T}^n \setminus \LT_\sigma(I_\X)$.
This follows from the observation that a border basis element
$g_j= b_j - \sum_{i=1}^\mu \gamma_{ij}t_i$ can be interpreted as
saying that the normal form of~$b_j$ with respect to a $\sigma$-Gr\"obner 
basis of~$I_\X$ is $\sum_{i=1}^\mu \gamma_{ij}t_i$, and since~$\sigma$
is degree compatible, this implies $\deg(\sum_{i=1}^\mu \gamma_{ij}t_i)
\le \deg(b_j)$. Hence we have $b_j\in\DF(g_j)$,
Condition~(d) of~\cite{KLR2}, Prop.~5.3 is satisfied, 
and the border basis is degree filtered.
\end{example}

A particular case of a degree filtered border basis is a homogeneous one.
The family of all homogeneous border bases can be parametrized 
as follows (see~\cite{KR4}, Section~5).

\begin{definition}
Let $\OO=\{t_1,\dots,t_\mu\}$ be an order ideal,
let $\partial\OO=\{b_1,\dots,b_\nu\}$ be the border of~$\OO$, and
let $I_\OO\hom$ be the ideal in~$K[C]$ generated by all 
indeterminates $c_{ij}$ such that $\deg(t_i)\ne\deg(b_j)$.
\begin{enumerate}
\item[(a)] The closed subscheme~$\BOhom$ of~$\BO$ defined by 
$I(\BOhom) = I(\BO)+I_\OO\hom$ is called the
{\bf homogeneous $\OO$-border basis scheme}.
Its affine coordinate ring is denoted by $B_\OO\hom =
K[C]/I(\BOhom)$.

\item[(b)] The set of polynomials $G\hom = \{g_1\hom,\dots,
g_\nu\hom\}$ in~$K[C][x_1,\dots,x_n]$ given by 
$g_j = b_j - \sum_{\{i\mid \deg(t_i)=\deg(b_j)\}}
c_{ij}\, t_i$ for $j=1,\dots,\nu$  is called the 
{\bf generic homogeneous $\OO$-border prebasis}.

\item[(c)] The canonical $B_\OO\hom$-algebra homomorphism
$$
\Phi\hom:\; B_\OO\hom \longrightarrow U_\OO\hom :=
B_\OO\hom[x_1,\dots,x_n]/ \langle G\hom\rangle
$$ 
is called the {\bf universal homogeneous $\OO$-border basis family}.
\end{enumerate}
\end{definition}

Again it follows by a base change from~\cite{KR4}, Thm.~3.4, that
the universal homogeneous $\OO$-border basis family is free with basis~$\OO$.
The matrices defining the multiplication maps on~$U_\OO\hom$
can be described as follows.

\begin{remark}\label{MultMatUhom}
For $k=1,\dots,n$, let $\A_k \in \Mat_\mu(K[C])$ be
the $k$-th generic multiplication matrix with respect to~$\OO$
(see Definition~\ref{borderbasisfamily}.b), 
and let $C^{\rm nonhom}$ be the set of all
indeterminates $c_{ij}$ such that $\deg(t_i)\ne\deg(b_j)$.
\begin{enumerate}
\item[(a)] For $k=1,\dots,n$, let $\A_k\hom$
be the matrix obtained from $\A_k$ by setting all
indeterminates in~$C^{\rm nonhom}$ equal to zero. Then the matrix
$\A_k\hom$ represents the multiplication map by~$x_k$ 
on~$U_\OO\hom$ with respect to the basis~$\OO$. The matrices
$\A_1\hom, \dots, \A_n\hom$ are called the
{\bf generic homogeneous multiplication matrices}
with respect to~$\OO$.

\item[(b)] For any polynomial $f\in P$, the multiplication by~$f$
on~$U_\OO\hom$ is given by the matrix $f(\A_1\hom,
\dots,\A_n\hom)$ with respect to the basis~$\OO$.
\end{enumerate}
\end{remark}

It is clear that the homogeneous $\OO$-border basis scheme $\BOhom$
is a closed subscheme of~$\BOdf$. 
Let us also look at the total arrow degree on~$B_\OO\hom$.

\begin{remark}
Let $C=\{c_{ij} \mid i\in\{1,\dots,\mu\},\, j\in\{1,\dots,\nu\}\}$, 
let $C\df = \{c_{ij}\in C \mid\deg(t_i)\le \deg(b_j)\}$, and let 
$C^{\rm hom} = \{c_{ij} \in C \mid \deg(t_i)=\deg(b_j)\}$.
Then the indeterminates in $C^{\rm hom}$ are precisely the elements
of total arrow degree zero in~$C$. Hence the entries of the
commutators of the matrices $\A_i\hom$ are
homogeneous polynomials of degree zero with respect to 
the total arrow degree. Since they generate~$I(\BO\hom)$, we
obtain an isomorphism of graded $K$-algebras
$$
B_\OO\hom \;=\; K[C]/I(\BOhom) \;\cong\; K[C\hom] / \bar{I}(\BOhom)
$$
where $\bar{I}(\BOhom)$ is the result of setting all indeterminates
in $C^{\rm nonhom} = C\setminus C\hom$ equal to zero in~$I(\BO)$ (or in $I(\BOdf)$),
and where the total arrow degree induces the trivial grading on~$B_\OO\hom$.
\end{remark}

By combining this remark with Remark~\ref{arrownonneg}, we get
the following useful result.

\begin{proposition}\label{IhomInIdf}
Given the set $C=\{c_{ij} \mid i\in\{1,\dots,\mu\},\,j\in\{1,\dots,\nu\}\}$,
we let $C\df = \{c_{ij}\in C \mid \deg(t_i)\le \deg(b_j)\}$ and 
$C\hom = \{c_{ij} \in C \mid \deg(t_i)=\deg(b_j)\}$.
\begin{enumerate}
\item[(a)] The defining ideal $\bar{I}(\BOhom)$
of~$\BOhom$ in~$K[C\hom]$
is exactly the set of homogeneous elements of degree
zero of the defining ideal $\bar{I}(\BOdf)$
in~$K[C\df]$ with respect to the total arrow degree.
In particular, we have $\bar{I}(\BOhom)=
\bar{I}(\BOdf) \cap K[C\hom]$ and
$\bar{I}(\BOhom) \cdot K[C\df] \subseteq \bar{I}(\BOdf)$.

\item[(b)] We have a canonical injective $K$-algebra
homomorphism $B_\OO\hom \hookrightarrow B_\OO\df$,
and the elements of $B_\OO\hom$ are precisely
the elements of total arrow degree zero in~$B_\OO\df$.
\end{enumerate}
\end{proposition}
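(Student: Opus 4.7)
The plan is to leverage the observation, recorded in Remark~\ref{arrownonneg}, that the total arrow degree induces a non-negative grading on $K[C\df]$ whose degree-zero component is precisely $K[C\hom]$, and that $\bar{I}(\BOdf)$ is a homogeneous ideal with respect to this grading (being generated by the entries of the commutators $\A_k\df \A_\ell\df - \A_\ell\df \A_k\df$, which are homogeneous of non-negative total arrow degree).

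For part~(a), I would first consider the substitution homomorphism $\pi: K[C\df] \to K[C\hom]$ which sends each indeterminate in $C\df \setminus C\hom$ to zero and fixes $C\hom$, and show that, restricted to homogeneous polynomials, $\pi$ coincides with the projection onto the degree-zero component. The key point is that any monomial of positive total arrow degree must involve at least one factor from $C\df \setminus C\hom$: indeed, all indeterminates in $C\df$ have non-negative total arrow degree, and those of degree zero are exactly the elements of $C\hom$, so positive total degree forces a factor of positive-degree. Such a monomial is killed by $\pi$, while monomials of total arrow degree zero lie in $K[C\hom]$ and are fixed by $\pi$. Consequently, on the homogeneous ideal $\bar{I}(\BOdf)$, the map $\pi$ realizes the projection onto its degree-zero component $(\bar{I}(\BOdf))_0 = \bar{I}(\BOdf) \cap K[C\hom]$. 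Combining this with the fact, recalled in the excerpt, that $\pi(\bar{I}(\BOdf)) = \bar{I}(\BOhom)$ (since setting $C^{\rm nonhom}$ to zero in $I(\BOdf)$ reduces, in $K[C\df]$, to setting the variables in $C\df \setminus C\hom$ to zero), I would conclude $\bar{I}(\BOhom) = \bar{I}(\BOdf) \cap K[C\hom]$. The containment $\bar{I}(\BOhom) \cdot K[C\df] \subseteq \bar{I}(\BOdf)$ follows immediately, since each generator of $\bar{I}(\BOhom)$ is a degree-zero element of $\bar{I}(\BOdf)$ and therefore already lies in $\bar{I}(\BOdf)$.

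Part~(b) will then be formal. The inclusion $K[C\hom] \hookrightarrow K[C\df]$ descends to a $K$-algebra homomorphism $B_\OO\hom \to B_\OO\df$, and injectivity is precisely the content of $\bar{I}(\BOhom) = \bar{I}(\BOdf) \cap K[C\hom]$ established in part~(a). Since $\bar{I}(\BOdf)$ is graded with respect to the non-negative total arrow degree, the quotient $B_\OO\df$ inherits a non-negative grading whose degree-zero component is $K[C\hom]/(\bar{I}(\BOdf) \cap K[C\hom]) = K[C\hom]/\bar{I}(\BOhom) = B_\OO\hom$, giving the desired characterization of the image.

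The only genuinely substantive step is the identification of the substitution $\pi$ with the projection onto the degree-zero part of the total arrow grading; this rests squarely on the non-negativity of the grading on $K[C\df]$ and is the real content of the proposition. Everything else is bookkeeping with definitions.
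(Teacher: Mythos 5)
Your argument is correct and is essentially the proof the paper intends: the paper gives no separate proof but simply says the proposition follows by combining Remark~\ref{arrownonneg} (non-negativity of the total arrow grading on $K[C\df]$) with the preceding remark identifying $C\hom$ as the degree-zero variables, which is exactly what you make explicit via the observation that setting $C\df\setminus C\hom$ to zero is the projection onto the degree-zero component of a homogeneous ideal. Your identification of this projection step as the only substantive point, with part~(b) following formally from the graded decomposition of $B_\OO\df$, matches the paper's reasoning.
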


In more geometric jargon, this proposition shows that $\BOhom$ is
the $\mathbb{G}_m$-fixed locus of~$\BO$, and hence an open affine subscheme
of the $\mathbb{G}_m$-fixed locus of the Hilbert scheme.
Thus the closed immersion $\BOhom \hookrightarrow \BOdf$ can be
interpreted geometrically as the retraction of the Bia\l ynicki-Birula
decomposition to the $\mathbb{G}_m$-fixed locus.

To finish the section, let us compute the subschemes
$\BOdf$ and $\BOhom$ of~$\BO$ in the setting of
Example~\ref{1-x-y-xy}.

\goodbreak
\begin{example}\label{1-x-y-xy-cont2}
For the order ideal $\OO = \{1,x,y,xy\}$ in~$\mathbb{T}^2$, 
we have calculated the ideal $I(\BO)$ in Example~\ref{1-x-y-xy}.
Let us also determine $I(\BOdf)$ and $I(\BOhom)$ in this case.
\begin{enumerate}
\item[(a)] To get the defining ideal of~$\BOdf$,
we find $C^{\rm nondf} = \{c_{ij} \mid \deg(b_j)
<\deg(t_i)\}$ first. In the current example we get $C^{\rm nondf}=
\emptyset$, and therefore $I(\BOdf)=I(\BO)$. In other words, we have
$\BOdf=\BO$.

\item[(b)] The defining ideal of~$\BOhom$ is obtained by adding
the ideal generated by
\begin{align*}
C^{\rm nonhom} &\;=\; \{ c_{ij} \mid \deg(b_j)\ne \deg(t_i) \} \\
&\;=\; \{ c_{11}, c_{12}, c_{13}, c_{14}, c_{21}, c_{22}, c_{23},
c_{24}, c_{31}, c_{32}, c_{33}, c_{34}, c_{43}, c_{44} \}
\end{align*}
to $I(\BO)$, and after simplifying the generators of $I(\BO)$, we obtain
$I(\BOhom) = \langle C^{\rm nonhom} \rangle$. In other words,
the scheme $\BOhom$ is equal to the plane $\Spec(K[c_{41},c_{42}])$.
\end{enumerate}
\end{example}

\bigbreak
%
%

\section{The Cayley-Bacharach Locus in $\BOdf$}
\label{The Cayley-Bacharach Locus in BODF}

In the following we want to describe the locus of all $K$-rational points
of the border basis scheme which represent 0-dimensional
affine schemes~$\X$ having the Cayley-Bacharach property. Since the description of
this property requires us to fix the Hilbert function of~$\X$,
we have to work in an appropriate subscheme of~$\BO$. When the Hilbert function
of~$\X$ corresponds to the degrees of the terms in~$\OO$, the appropriate subscheme
of~$\BO$ is the degree filtered border basis scheme. Hence we look for a 
description of the Cayley-Bacharach locus in~$\BOdf$. Let us start by recalling
the definition of the Cayley-Bacharach property in the general setting
(see~\cite{KLR1}, Definition 3.10).

Let $\X$ be a 0-dimensional subscheme of~$\AA^n_K$, let $I_\X\subseteq P$ be the
vanishing ideal of~$\X$, and let $R_\X = P/I_\X$ be the coordinate ring of~$\X$.
Then the primary decomposition of the vanishing 
ideal~$I_\X$ of~$\X$ has the form
$$
I_\X \;=\; \Q_1 \cap \cdots \cap \Q_s
$$
where $\Q_i\subset P$ is an $\M_i$-primary ideal with
a maximal ideal $\M_i \subset P$ for each $i\in\{1,\dots,s\}$.
The image of the maximal ideal~$\M_i$ in~$R_\X$ is denoted
by $\m_i$ for $i\in\{1,\dots,s\}$. Clearly, the ideals $\m_1,\dots,\m_s$
are the maximal ideals of~$R_\X$.
In this setting, the following definitions were introduced
in~\cite{KLR1}.

\begin{definition} Let $\X$ be a 0-dimensional subscheme
of $\AA^n_K$ as above.
\begin{enumerate}
\item[(a)] For $i\in\{1,\dots,s\}$, an ideal $J \subset P$ 
is called a {\bf minimal $\Q_i$-divisor} of~$I_\X$ if
$J=\Q_1 \cap \cdots \cap \Q'_i \cap \cdots\cap \Q_s$
with an ideal $\Q'_i \subset P$ such that
$\Q_i \subset \Q'_i \subseteq \M_i$ and
$\dim_K(\Q'_i/\Q_i)=\dim_K(P/\M_i)$.

\item[(b)] For $i\in \{1,\dots,s\}$ and a minimal
$\Q_i$-divisor $J$ of~$I_\X$, we let
$\ri(J/I_\X) = \max\{\ord_\F(f) \mid f\in J/I_\X
\setminus \{0\}\}$. Then the number
$$
\sepdeg(\m_i) \;=\; \min\{ \ri(J/I_\X) \mid J
\hbox{\ \rm is a minimal $\Q_i$-divisor of\ }I_\X\}
$$
is called the {\bf separator degree} of~$\m_i$ in~$R_\X$.

\item[(c)] The scheme~$\X$ is called a {\bf Cayley-Bacharach scheme},
or the ring $R_\X$ is said to have the {\bf Cayley-Bacharach property},
if we have $\sepdeg(\m_i)=\ri(R_\X)$ for $i=1,\dots,s$.
\end{enumerate}
\end{definition}

In~\cite{KLR1}, Section~3, it is shown that this definition
generalizes the classical definition of the Cayley-Bacharach property
for a set of points~$\X$ in an affine or projective space over an
algebraically closed field. Recall that this classical definition
is usually phrased by requiring that every hypersurface of degree
$\ri(R_\X)-1$ which contains all points of~$\X$ but one automatically
contains the last point.

Furthermore, in~\cite{KLR1}, Alg.~4.6, we provided an algorithm
which checks the Cayley-Bacharach property of~$R_\X$ using
the multiplication matrices of the canonical module of~$R_\X$. This result
will be the basis of our algorithm below.
First we introduce the following terminology.

\begin{definition}\label{def:CBLocus}
Let $\OO=\{t_1,\dots,t_\mu\}$ be an order ideal in~$\mathbb{T}^n$.
Then the set of all $K$-rational points $\Gamma=(\gamma_{ij})
\in K^{\mu\nu}$ of the border basis scheme~$\BO$
which represent a 0-dimensional Cayley-Bacharach scheme~$\X_\Gamma$
is called the {\bf set of Cayley-Bacharach points} of~$\BO$.
\end{definition}

As the following algorithms shows, there exists an open 
subscheme $\CB^{\rm df}_\OO$ of~$\BOdf$
whose $K$-rational points are precisely the set of 
Cayley-Bacharach points of~$\BOdf$. It is 
called the {\bf Cayley-Bacharach locus} in~$\BOdf$.
More precisely, we compute the equations
of a closed subscheme of~$\BOdf$ which forms the complement 
of~$\CB^{\rm df}_\OO$.

\begin{algorithm}{\upshape\bf (Computing the Cayley-Bacharach Locus 
in $\BOdf$)}\label{alg:CBLocDF}\\
Let $\OO=\{t_1,\dots,t_\mu\}$ be an order ideal in~$\mathbb{T}^n$,
and let $\Delta = \# \{i\in\{1,\dots,\mu\} \mid \deg(t_i)=\deg(t_\mu)\}$.
Consider the following sequence of instructions.
\begin{enumerate}
\item[(1)] Using Proposition~\ref{CharBOdf}.b, calculate
$I(\BOdf)=I(\BO)+I_\OO\df$.

\item[(2)] Construct the generic degree filtered multiplication matrices 
$\A^{\rm df}_1, \dots, \A^{\rm df}_n$. For $i=1,\dots,\mu$, 
compute the multiplication matrix 
$M_{t_i}=t_i(\A^{\rm df}_1,\dots,\A^{\rm df}_n)$.

\item[(3)]
For  $j=1,\dots,\Delta$, form the matrix 
$V_j \in \Mat_\mu(K[C])$  whose $i$-th column is 
the $(\mu-\Delta+j)$-th column of $M_{t_i}\tr$ for $i=1,\dots,\mu$.

\item[(4)] Form the block column matrix
$W=\Col(V_1, \dots, V_\Delta)$ and 
compute the ideal~$J_\OO$ in~$K[C]$ generated by the maximal 
minors of~$W$.

\item[(5)]  Return the ideal $I(\BOdf) + J_\OO$.
\end{enumerate}
This is an algorithm which computes an ideal in~$K[C]$. 
This ideal defines a closed subscheme
$\NonCB^{\rm df}_{\OO}$ whose $K\!$-rational points 
represent those 0-dimensional subschemes
of~$\AA^n_K$ which have a degree filtered $\OO$-border basis,
but are not Cayley-Bacharach schemes.
\end{algorithm}

\begin{proof}
A $K$-rational rational point $\Gamma=(\gamma_{ij})$ of~$\BOdf$
corresponds to a zero of the ideal $I(\BOdf)$ which is computed
in Step~(1). According to Algorithm~4.6 of~\cite{KLR1}, the 0-dimensional 
scheme $\X_\Gamma$ represented by~$\Gamma$ is a Cayley-Bacharach scheme
if and only if~$\Gamma$ is a zero of the ideal generated by the
maximal minors of~$W$. This observation finishes the proof.
\end{proof}

In view of this algorithm, the Cayley-Bacharach locus in~$\BOdf$
is the open subscheme $\CB^{\rm df}_\OO = \BOdf \setminus
\NonCB^{\rm df}_\OO$ of the degree filtered $\OO$-border basis scheme.
Let us compute it for the following order ideal.

\begin{example}\label{1-x-y-z-x^2}
Let $K$ be a field, and let $\OO$ be the order ideal 
$\OO=\{1,x,y,z,x^2\}$ in $P=K[x,y,z]$.
Then we have $\partial\OO = \{ xy,y^2,xz,yz,z^2,x^3,x^2y,x^2z\}$
and thus $\mu=5$ as well as $\nu=8$.
The ring $K[C]$ has 40 indeterminates, and the ideal
$I(\BO)$ has 60 generators. Since~$\OO$ has the generic Hilbert
function, we have $\BO=\BOdf$ by Proposition~\ref{CharMaxdeg}.

When we calculate an ideal defining $\NonCB^{\rm df}_\OO$
using the algorithm, we have $\Delta=1$ and
$$
W \;=\; V_1 \;=\; \begin{pmatrix}
0 & 0 & 0 & 0 & 1\\
0 & 1 & c_{51} & c_{53} & c_{56} \\
0 & c_{51} & c_{52} & c_{54} & \;c_{31}c_{51}+c_{41}c_{53} + c_{51}c_{56}
+ c_{21} \\
0 & c_{53} & c_{54} & c_{55} & \;c_{33}c_{51} + c_{43}c_{53} + 
c_{53}c_{56} + c_{23}\\
\;1\; & \;c_{56}\; & \;c_{57}\; & \;c_{58} & c_{36}c_{51} + c_{46}c_{53} + 
c_{56}^2 + c_{26}
\end{pmatrix}
$$
Consequently, the algorithm returns the ideal 
$I(\BOdf) + \langle \det(W)\rangle \, = \; 
I(\BOdf) + \,\allowbreak \langle f\rangle$, where $f=
c_{52}\, c_{53}^2 - 2\, c_{51}\,c_{53}\,c_{54} + c_{51}^2\, c_{55} 
+ c_{54}^2  - c_{52}\, c_{55}$. Thus the Cayley-Bacharach
locus in~$\BOdf$ is the complement of the hypersurface section
of~$\BOdf$ given by $f=0$.
\end{example}

\bigbreak
%
%

\section{The Strict Cayley-Bacharach and Strict Gorenstein Loci in $\BOdf$}
\label{The Strict Cayley-Bacharach and Strict Gorenstein Loci in BODF}

Stronger properties than the Cayley-Bacharach
property are defined as follows.

\begin{definition}
Let $\X$ be a 0-dimensional scheme in~$\AA^n_K$.
\begin{enumerate}
\item[(a)] The scheme~$\X$ is called
a {\bf strict Cayley-Bacharach scheme} if the
graded ring $\grF(R_\X)$ has the Cayley-Bacharach property.

\item[(b)] The scheme~$\X$ is called a {\bf strict Gorenstein scheme}
if the graded ring $\grF(R_\X)$ is a Gorenstein ring.

\end{enumerate}
\end{definition}

In~\cite{KLR1}, Section~6, we showed that every strict
Cayley-Bacharach scheme is a Cayley-Bacharach scheme.

\subsection{Checking the Strict Cayley-Bacharach Property}

In order to check schemes for the strict Cayley-Bacharach property, 
we may use the following algorithm
which follows from~\cite{KLR1}, Alg.~4.6.

\begin{algorithm}{\upshape\bf (Checking Strict Cayley-Bacharach 
Schemes)}\label{alg:CheckSCBS}\\
Let $\X$ be a 0-dimensional scheme in~$\AA^n_K$ with affine coordinate
ring $R_\X = P/I_\X$, and let $\mu=\dim_K(R_\X)$.
Consider the following sequence of instructions.
\begin{enumerate}
\item[(1)] Compute an order ideal $\OO=\{t_1,\dots,t_\mu\}$
in~$\mathbb{T}^n$ such that~$\OO$ is a degree filtered $K$-basis
of~$R_\X$. Let $\Delta\ge 1$ be such that 
$t_{\mu-\Delta+1},\dots, t_\mu$ are the elements of~$\OO$ 
of order $\ri(R_\X)$.

\item[(2)] For $i=1,\dots,\mu$, compute the matrix
$M_{\bar{t}_i}\in\Mat_\mu(K)$ representing the multiplication 
by~$\bar{t}_i$ on $\grF(R_\X)\cong R_\X/\DF(I_\X)$ with respect to
the $K$-basis $\overline{\OO}=(\bar{t}_1,\dots,\bar{t}_\mu)$.

\item[(3)] For $j=1,\dots,\Delta$, form the 
matrix $V_j \in \Mat_\mu(K)$ whose $i$-th column is 
the $(\mu-\Delta+j)$-th column of $(M_{\bar{t}_i})\tr$ for 
$i=1,\dots,\mu$.

\item[(4)] Form the block column matrix
$W=\Col(V_1,\dots,V_\Delta)$ and compute the maximal minors
of~$W$.

\item[(5)] If one of these maximal minors is non-zero,
return ${\tt TRUE}$. Otherwise, return ${\tt FALSE}$.
\end{enumerate}
This is an algorithm which checks whether~$\X$ is a
strict Cayley-Bacharach scheme and returns the corresponding
Boolean value.
\end{algorithm}

For an example of the application of this algorithm, 
see~\cite{KLR1}, Ex.~6.11. 
\smallskip

A strict Gorenstein scheme is always locally Gorenstein (see for
instance~\cite{KLR1}, Thm.\ 6.8), but the converse is not true in general.
Moreover, a strict Gorenstein scheme is always a strict
Cayley-Bacharach scheme (see~\cite{KLR1}, Thm.\ 6.12).
The Hilbert function of the graded ring $\grF(R_\X)$ is given
by the Castelnuovo function of~$\X$, i.e., by the first difference function
of~$\HFa_\X$. The following property is essential here.

\begin{definition}
For an order ideal $\OO=\{t_1,\dots,t_\mu\}\subseteq\mathbb{T}^n\!$, 
let $\HF_\OO=(h_0,h_1,\dots)$ be the affine Hilbert function of~$\OO$,
and let $\rho=\max \{ i\ge 0 \mid h_i>0\}$.
We say that $\HF_\OO$ is {\bf symmetric} if
$h_i=h_{\rho-i}$ for $i=1,\dots,\rho$.
\end{definition}

Notice that if~$\OO$ is a degree filtered $K$-basis of~$R_\X$
then this definition agrees with the definition of the symmetry
of the Hilbert function of~$\X$ given in~\cite{KLR1}, Def.~6.4.
It is known that the affine Hilbert function of a strict Gorenstein 
scheme is symmetric (see for instance \cite{KLR1}, Thm.~6.8). More
precisely, a 0-dimensional affine scheme~$\X$ is a
strict Gorenstein scheme if and only if $\HFa_\X$ is symmetric and~$\X$ 
is a strict Cayley-Bacharach scheme (see~\cite{KLR1}, Thms.\ 6.8 and 6.12).
Thus, using Algorithm~\ref{alg:CheckSCBS}, we can check 
whether~$\X$ is strictly Gorenstein in the following way.

\begin{corollary}{\upshape\bf (Checking Strict Gorenstein 
Schemes)}\label{alg:CheckSGor}\\
In the setting of Algorithm~\ref{alg:CheckSCBS}, the 
following instructions define an algorithm which
checks whether~$\X$ is a strict Gorenstein scheme 
and returns the corresponding Boolean value.
\begin{enumerate}
\item[(1)] Compute an order ideal $\OO=\{t_1,\dots,t_\mu\}$
which represents a degree filtered $K\!$-basis of~$R_\X$.

\item[(2)] Check whether $\HF_\OO$ is symmetric. If this 
is not the case, return ${\tt FALSE}$ and stop.

\item[(3)] Using Algorithm~\ref{alg:CheckSCBS}, check 
whether~$\X$ is a strict Cayley-Bacharach scheme and return
the corresponding Boolean value.
\end{enumerate}
\end{corollary}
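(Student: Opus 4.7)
The plan is to reduce the verification to the characterization stated in the paragraph immediately preceding the corollary: a 0-dimensional affine scheme~$\X$ is strict Gorenstein if and only if $\HFa_\X$ is symmetric and~$\X$ is a strict Cayley-Bacharach scheme (from~\cite{KLR1}, Thms.~6.8 and~6.12). Hence it suffices to show that Steps~(1)--(3) implement precisely these two tests and that each step terminates.

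First I would argue that Step~(1) is well-defined and effective. The existence of an order ideal $\OO$ whose residue classes form a degree filtered $K$-basis of~$R_\X$ is guaranteed by the discussion in Section~\ref{The Degree Filtered Border Basis Scheme BODF}; concretely, one may take $\OO = \OO_\sigma(I_\X) = \mathbb{T}^n\setminus\LT_\sigma(I_\X)$ for any degree compatible term ordering~$\sigma$, as recalled in the example following Proposition~\ref{CharMaxdeg}.

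Next, for Step~(2), I would verify that symmetry of $\HF_\OO$ is equivalent to symmetry of $\HFa_\X$. Since $\OO$ is a degree filtered $K$-basis, the set $F_i\OO = \OO\cap F_iR_\X$ is a $K$-basis of $F_iR_\X$ for every~$i$ and consists exactly of those $\bar t_j$ with $\deg(t_j)\le i$. Therefore
\[
h_i \;=\; \#\{t_j\in\OO\mid\deg(t_j)=i\} \;=\; \dim_K(F_iR_\X/F_{i-1}R_\X) \;=\; \Delta\HFa_\X(i),
\]
so $\HF_\OO$ coincides with the Castelnuovo function of~$\X$. Symmetry of $\HF_\OO$ in the sense defined above the corollary is thus identical to the symmetry of $\HFa_\X$ used in~\cite{KLR1}, Def.~6.4. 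Consequently, returning \texttt{FALSE} at this stage correctly rules out the strict Gorenstein property via the necessity half of the characterization.

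Finally, if the symmetry test passes, Step~(3) delegates to Algorithm~\ref{alg:CheckSCBS}, which by its own correctness proof decides whether~$\X$ is a strict Cayley-Bacharach scheme. Combining the two verified conditions with the stated characterization yields the claim, and termination is inherited from the subroutines. The main subtlety is the one flagged in Step~(2): the syntactic symmetry of $\HF_\OO$ matches the scheme-theoretic symmetry of $\HFa_\X$ \emph{only} when~$\OO$ is a degree filtered basis, which is exactly why Step~(1) cannot be replaced by the computation of an arbitrary monomial basis — Example~\ref{nondegreefiltered} shows that a non-degree-filtered basis can produce an order tuple incompatible with the true Castelnuovo function.
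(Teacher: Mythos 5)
Your proposal is correct and follows essentially the same route as the paper, which justifies the corollary directly from the characterization (\cite{KLR1}, Thms.~6.8 and~6.12) that~$\X$ is strict Gorenstein if and only if $\HFa_\X$ is symmetric and~$\X$ is strict Cayley-Bacharach, delegating Step~(3) to Algorithm~\ref{alg:CheckSCBS}. Your additional check that $\HF_\OO$ equals the Castelnuovo function of~$\X$ precisely because~$\OO$ is degree filtered is the same observation the paper records just before the symmetry definition, so nothing is missing or different in substance.
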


\begin{remark}
Notice that, in the last part of this algorithm, it would have been
sufficient to check whether~$\X$ is a Cayley-Bacharach scheme.
However, the more stringent condition of Algorithm~\ref{alg:CheckSCBS}
yields, in general, a more efficient test. Moreover, notice that
the last steps in Algorithm~\ref{alg:CheckSCBS} simplify here, 
because we have $\Delta=1$ and therefore only one matrix~$V_1$
and one maximal minor $\det(V_1)$.
\end{remark}

Let us apply the preceding algorithm to some concrete cases.

\begin{example}\label{ex:StrGorNotStrCI}
Let $K=\QQ$, let $P=K[x,y,z]$, and let~$\X$ be the 0-dimensional
subscheme of~$\AA^n_K$ defined by the ideal
$I_\X=\langle y^2-x^2,\,z^2-x^2,\, xy,\,xz,\,yz,\,x^3 \rangle$.
Since the ideal~$I_\X$ is homogeneous, we have 
$\DF(I_\X)=I_\X$. Moreover, from the fact that the given
system of generators is the reduced Gr\"obner
basis of~$I_\X$ with respect to the ${\tt DegRevLex}$ term ordering
such that $z>y>x$,
it follows that a degree filtered $K$-basis of~$R_\X$
is given by $\OO=\{ 1,\, x,\, y,\, z,\, x^2\}$.

Thus the Hilbert function of~$\OO$ is $(1,3,1)$ which is
symmetric. Hence Algorithm~\ref{alg:CheckSGor} asks us to
compute the multiplication matrices for the elements of~$\OO$
and to combine their last columns. We get the matrix
$$
V \;=\; \begin{pmatrix}
\;0\; & \;0\; & \;0\; & \;0\; & \;1\;  \\
0 & 1 & 0 & 0 & 0\\
0 & 0 & 1 & 0 & 0\\
0 & 0 & 0 & 1 & 0\\
1 & 0 & 0 & 0 & 0\\
\end{pmatrix}
$$
and since $\det(V)=-1$, we conclude that~$\X$ is a strict
Gorenstein scheme.
\end{example}

The following example of a strict Gorenstein scheme
is the well-known case of eight points on a twisted cubic curve.
Notice that in~\cite{KLR2}, Example~4.6, we check that 
it is not a strict complete intersection.

\begin{example}\label{ex:8pointscubic}
Let $K=\QQ$, let $P=K[x,y,z]$, and let~$\X$ be the reduced
subscheme of~$\AA^3_K$ consisting of the
eight points $p_1 = (0,0,0)$, $p_2 =(1,1,1)$, $p_3 = (-1,1,-1)$,
$p_4 = (2,4,8)$, $p_5 = (-2,4,-8)$, $p_6 = (3,9,27)$, $p_7 = (-3,9,-27)$, 
and $p_8 = (4,16,64)$ on the twisted cubic curve
$T=\{(t, t^2, t^3)\in \AA^3_K \mid t\in K\}$.

The reduced Gr\"obner basis with respect to the ${\tt DegRevLex}$
term ordering such that $z>y>x$ of the vanishing ideal~$I_\X$ is
\begin{align*}
\{ \; & x^2-y,\; xy-z,\; y^2-xz, 
yz^2-4xz^2-14z^2+56yz+49xz -196z -36y + 144x,\\
& z^3 - 30xz^2 +273 yz -820z +576x \; \}
\end{align*}
and hence we have
$\DF(I_\X) = \langle \, x^2,\, xy,\; y^2-xz,\, yz^2-4xz^2,\,
z^3-30xz^2 \rangle$.

Now we apply Algorithm~\ref{alg:CheckSGor}. To get a degree 
filtered $K$-basis of~$R_\X$, it suffices to take the terms
$\OO= \{1,\,  x,\,  y,\,  z,\,  xz,\,  yz,\,  z^2,\,  xz^2\}$
in the complement of the leading term ideal of~$I_\X$.
Then the Hilbert function of~$\OO$ is $(1,3,3,1)$, 
and we observe that it is symmetric. 

Next we determine the multiplication matrices 
corresponding to the multiplication by the elements of~$\OO$ 
on~$P/ \DF(I_\X)$ and combine their last columns to get
$$
V \;=\; \begin{pmatrix}
\;0\; & \;0\; & \;0\; & \;0\; & \;0\; & \;0\; & \;0\; & \;1\;   \\
0 & 0 & 0 &  0 & 0 & 0 &  1 & 0\\
0 & 0 & 0 &  0 & 0 & 1 &  4 & 0\\
0 & 0 & 0 &  0 & 1 & 4 & 30 & 0\\
0 & 0 & 0 &  1 & 0 & 0 &  0 & 0\\
0 & 0 & 1 &  4 & 0 & 0 &  0 & 0\\
0 & 1 & 4 & 30 & 0 & 0 &  0 & 0\\
1 & 0 & 0 &  0 & 0 & 0 &  0 & 0\\
\end{pmatrix}
$$
Since $\det(V)=1 \ne 0$, it follows that~$\X$
is a strict Gorenstein scheme.
\end{example}

\subsection{Computing the Strict Cayley-Bacharach Locus}

In the following we present algorithms to calculate the loci of all
strict Cayley-Bacharach schemes and all strict Gorenstein schemes
in~$\BOdf$. We use the following terminology.

\begin{definition}\label{def:StrictCBLocus}
Let $\OO=\{t_1,\dots,t_\mu\}$ be an order ideal in~$\mathbb{T}^n$.
\begin{enumerate}
\item[(a)] The set of all $K$-rational points $\Gamma=(\gamma_{ij})
\in K^{\mu\nu}$ of the border basis scheme~$\BO$
which represent a 0-dimensional strict Cayley-Bacharach scheme~$\X_\Gamma$ 
is called the {\bf set of strict Cayley-Bacharach points} of~$\BO$.

\item[(b)] The set of all $K$-rational points $\Gamma=(\gamma_{ij})
\in K^{\mu\nu}$ of the border basis scheme~$\BO$
which represent a 0-dimensional strict Gorenstein scheme~$\X_\Gamma$ 
is called the {\bf set of strict Gorenstein points} of~$\BO$.
\end{enumerate}
\end{definition}

Using the next algorithm, we see that there exists an open subscheme 
$\SCB^{\rm df}_\OO$  of~$\BOdf$ whose $K$-rational points are precisely 
the strict Cayley-Bacharach points of~$\BO$ which are contained in~$\BOdf$. 
This scheme is called the {\bf strict Cayley-Bacharach locus} in~$\BOdf$.
We need the following method for finding the generic
multiplication matrices on the associated graded rings.

\begin{remark}\label{GenHomMultMat}
Let $\OO=\{t_1,\dots,t_\mu\}$ be an order ideal in~$\mathbb{T}^n$,
let $\partial\OO=\{b_1,\dots,b_\nu\}$ be its border, and let
$G\df=\{g_1\df,\dots,g_\nu\df\}$ be the generic degree filtered
$\OO$-border prebasis. 
For every $K$-rational point $\Gamma=(\gamma_{ij})$
of~$\BOdf$, the 0-dimensional scheme~$\X_\Gamma$ has the
associated graded ring $\grF(R_{\X_\Gamma})\cong P/\DF(I_{\X_\Gamma})$.
Since $\X_\Gamma$ has a degree filtered $\OO$-border basis, we have
$\DF(I_{\X_\Gamma})=\langle \DF(g_1),\dots,\DF(g_\nu)\rangle$, where
$\DF(g_j)= b_j - \sum_{\{ i\mid \deg(t_i)=\deg(b_j)\}} \gamma_{ij}\, t_i$
(see~\cite{KR4}, Theorem 2.4).

Therefore all these associated graded rings are parametrized by the
$K$-algebra $U_\OO\hom = B_\OO\hom[x_1,\dots,x_n]/ 
\langle g_1\hom,\dots,g_\nu\hom \rangle$,
where $B_\OO\hom$ is the affine coordinate ring of the
homogeneous border basis scheme,  and where we have
the equality $g_j\hom = b_j - \sum_{\{ i\mid \deg(t_i)=
\deg(b_j)\}} c_{ij}\, t_i$ for $j=1,\dots,\nu$.
Thus the multiplication matrices of the associated graded rings
are parametrized by the generic homogeneous multiplication matrices
$\A_1\hom, \dots, \A_n\hom$ given in Remark~\ref{MultMatUhom}.
For an arbitrary element $f\in P$, the multiplication by~$f$
of the associated graded rings is therefore given by
$M^{\rm hom}_f=f(\A_1\hom, \dots, \A_n\hom)$ with respect to the
basis~$\OO$.
\end{remark}

Thus we are ready to compute the following subscheme of~$\BOdf$.

\begin{algorithm}{\upshape\bf (Computing the Strict Cayley-Bacharach Locus 
in $\BOdf$)}\label{alg:SCBLocDF}\\
Let $\OO=\{t_1,\dots,t_\mu\}$ be an order ideal in~$\mathbb{T}^n$,
and let $\Delta = \# \{i\in\{1,\dots,\mu\} \mid \deg(t_i)=\deg(t_\mu)\}$.
Consider the following sequence of instructions.
\begin{enumerate}
\item[(1)] Using Proposition~\ref{CharBOdf}.b, calculate
$I(\BOdf)=I(\BO)+I_\OO\df$.

\item[(2)] For $i=1,\dots,\mu$ use Remark~\ref{GenHomMultMat}
to compute the multiplication matrix $M_{t_i}\hom$ for the multiplication 
by~$t_i$ on~$U_\OO\hom$.

\item[(3)]
For  $j=1,\dots,\Delta$, form the matrix 
$V_j \in \Mat_\mu(K[C])$  whose $i$-th column is 
the $(\mu-\Delta+j)$-th column of $(M_{t_i}\hom)\tr$ for $i=1,\dots,\mu$.

\item[(4)] Form the block column matrix
$W=\Col(V_1, \dots, V_\Delta)$ and 
compute the ideal~$J$ in~$K[C]$ generated by the maximal 
minors of~$W$.

\item[(5)]  Return the ideal $I(\BOdf) + J$.
\end{enumerate}
This is an algorithm which computes an ideal in the 
ring~$K[C]$. This ideal defines a closed subscheme
$\NonSCB^{\rm df}_\OO$ whose $K$-rational points 
represent the 0-dimensional subschemes
of~$\AA^n_K$ which have a degree filtered $\OO$-border basis,
but are not strict Cayley-Bacharach schemes.
\end{algorithm}

\begin{proof}
A $K$-rational rational point $\Gamma=(\gamma_{ij})$ of~$\BOdf$
corresponds to a zero of the ideal $I(\BOdf)$ which is computed
in Step~(1). By Algorithm~\ref{alg:CheckSCBS}, the 
0-dimensional scheme $\X_\Gamma$ represented by~$\Gamma$
is a strict Cayley-Bacharach scheme
if and only if~$\Gamma$ is a zero of the ideal generated by the
maximal minors of~$W$, and this observation finishes the proof.
\end{proof}

In view of this algorithm, we see that $\SCB^{\rm df}_\OO = 
\BOdf \setminus \NonSCB^{\rm df}_\OO$ is an open subscheme of~$\BOdf$.
Let us calculate the scheme $\NonSCB^{\rm df}_\OO$ in the 
setting of Example~\ref{1-x-y-z-x^2}.

\begin{example}\label{1-x-y-z-x^2-cont}
Let $\OO$ be the order ideal $\OO=\{1,x,y,z,x^2\}$ in
$P=K[x,y,z]$. We apply Algorithm~\ref{alg:SCBLocDF}
to compute the ideal defining the subscheme $\NonSCB^{\rm df}_\OO$
of~$\BOdf$ and get $I(\BOdf) + \langle
c_{52}\, c_{53}^2 - 2\, c_{51}\,c_{53}\,c_{54} + c_{51}^2\, c_{55} 
+ c_{54}^2  - c_{52}\, c_{55} \rangle$. Hence the strict
Cayley-Bacharach locus and the Cayley-Bacharach locus in~$\BOdf$
are identical in this case. This is in agreement with~\cite{KLR1},
Theorems~6.8 and~6.12, since the Hilbert function of~$\OO$
is symmetric and satisfies $\Delta=1$.
\end{example}

In the following example, the Cayley-Bacharach and the strict 
Cayley-Bacharach locus differ.

\begin{example}\label{1-x-y-x2-xy-y2-x3}
Consider the order ideal $\OO=\{1,\,x,\,y,\,x^2,\, xy,\,y^2,\,
x^3\}$ in $P=K[x,y]$. Its border is $\partial\OO = 
\{ x^2y,\, xy^2,\, y^3,\, x^4,\, x^3y\}$.
Thus we have $\mu=7$, $\nu=5$, and the ideal
$I(\BO)$ has 28 generators. Since the order ideal~$\OO$
has the generic Hilbert function, we get $I(\BOdf)=I(\BO)$
by Proposition~\ref{CharMaxdeg}.

Now we use Algorithm~\ref{alg:CBLocDF} to compute
an ideal which defines~$\NonCB^{\rm df}_\OO$. 
We obtain the ideal $I(\BO) + \langle f \rangle$
where $f\in K[C]$ is a polynomial of the shape
$$
f \;=\; c_{43} c_{71}^5 \;-\; c_{53} c_{71}^4 c_{72} \;+\;
\cdots  \;+\; c_{42}c_{73}^2 
$$
Thus the Cayley-Bacharach locus in~$\BOdf$ is the complement
of a hypersurface section of~$\BOdf$.

Next we use Algorithm~\ref{alg:SCBLocDF} to compute
an ideal which defines~$\NonSCB^{\rm df}_\OO$. 
The result is the ideal $I(\BO)$, in agreement with~\cite{KLR1},
Thm.~6.12, which says that every strict Cayley-Bacharach scheme
with $\Delta_\X=1$ is a strict Gorenstein scheme. Since
the Hilbert function $(1,2,3,1)$ of~$\OO$ is not symmetric, this is 
impossible here. In other words, the strict Cayley-Bacharach locus 
in~$\BOdf$ is empty.
\end{example}

Finally, we combine Algorithm~\ref{alg:SCBLocDF} with a
check for the symmetry of the Hilbert function of~$\OO$
and get the following corollary. It shows that there exists 
an open subscheme $\SGor^{\rm df}_\OO$ of~$\BOdf$ whose $K$-rational points
are exactly those points in the set of strict
Gorenstein points of~$\BO$ which are contained in~$\BOdf$. 
This subscheme is called the {\bf strict Gorenstein locus} in~$\BOdf$.

\begin{corollary}{\upshape\bf (Computing the Strict Gorenstein Locus in 
$\BOdf$)} \label{cor:StrictGorLoc}\\
Let $\OO=\{t_1,\dots,t_\mu\}$ be an order ideal in~$\mathbb{T}^n$, and 
let $\rho=\deg(t_\mu)$. Then the following instructions 
define an algorithm which computes an ideal in~$K[C]$.
This ideal defines a closed subscheme $\NonSGor^{\rm df}_\OO$
whose $K\!$-rational points represent the 0-dimensional subschemes
of~$\AA^n_K$ which have a degree filtered $\OO$-border basis,
but are not strict Gorenstein schemes.
\begin{enumerate}
\item[(1)] Using Proposition~\ref{CharBOdf}.b, calculate 
$I(\BOdf)=I(\BO)+I_\OO\df$.

\item[(2)] If the Hilbert function $\HF_\OO$ 
is not symmetric, then return the ideal $I(\BOdf)$ and stop. 

\item[(3)] Otherwise, apply Algorithm~\ref{alg:SCBLocDF}
and return the ideal $I(\BOdf)+J$ it computes.
\end{enumerate}
\end{corollary}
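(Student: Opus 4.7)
The plan is to reduce the correctness of the algorithm to the characterization of strict Gorenstein schemes recalled just before the corollary: a 0-dimensional affine scheme~$\X$ is strict Gorenstein if and only if $\HFa_\X$ is symmetric and $\X$ is a strict Cayley-Bacharach scheme (by Theorems~6.8 and~6.12 of~\cite{KLR1}). Correctness of Step~(1) is immediate from Proposition~\ref{CharBOdf}(b), which says the ideal $I(\BOdf)=I(\BO)+I_\OO\df$ defines exactly the closed subscheme whose $K$-rational points represent the 0-dimensional schemes having a degree filtered $\OO$-border basis.

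The bridge from $\HF_\OO$ to the Hilbert function of the scheme is provided by the discussion right after Definition~\ref{def:affineHF}: if $\OO$ is a degree filtered $K$-basis of $R_{\X_\Gamma}$, then $\HF_\OO$ agrees with the Castelnuovo function $\Delta\HFa_{\X_\Gamma}$. Consequently, for every $K$-rational point $\Gamma$ of $\BOdf$, the affine Hilbert function $\HFa_{\X_\Gamma}$ is symmetric (in the sense of~\cite{KLR1}, Def.~6.4) if and only if $\HF_\OO$ is symmetric. This is the only non-formal ingredient of the argument, and it rests squarely on the restriction to $\BOdf$ rather than the full scheme~$\BO$; I expect it to be the one spot deserving explicit mention, since without it the two notions of symmetry could disagree.

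With this bridge in hand, the proof splits into the two cases dictated by the algorithm. If $\HF_\OO$ is not symmetric, then $\HFa_{\X_\Gamma}$ is not symmetric for any $K$-rational point $\Gamma$ of $\BOdf$, so no such $\X_\Gamma$ can be strict Gorenstein. Hence $\NonSGor^{\rm df}_\OO=\BOdf$ as a set of $K$-rational points, and returning $I(\BOdf)$ in Step~(2) is correct. If $\HF_\OO$ is symmetric, then every $\X_\Gamma$ coming from a $K$-rational point of $\BOdf$ has a symmetric $\HFa_{\X_\Gamma}$, so the characterization above reduces "strict Gorenstein" to "strict Cayley-Bacharach" on $\BOdf$. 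Thus $\NonSGor^{\rm df}_\OO$ and $\NonSCB^{\rm df}_\OO$ have the same $K$-rational points, and by the correctness of Algorithm~\ref{alg:SCBLocDF} already proved, the ideal returned in Step~(3) defines $\NonSGor^{\rm df}_\OO$ up to radical, in line with the standing convention of the paper.
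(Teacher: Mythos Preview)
Your proof is correct and follows essentially the same route as the paper's own argument: both reduce strict Gorenstein to ``symmetric Hilbert function plus strict Cayley-Bacharach'' via the results of~\cite{KLR1}, handle the non-symmetric case by observing that no point of~$\BOdf$ can then represent a strict Gorenstein scheme, and in the symmetric case defer to Algorithm~\ref{alg:SCBLocDF}. You are somewhat more explicit than the paper in spelling out why $\HF_\OO$ coincides with the Castelnuovo function of~$\X_\Gamma$ on~$\BOdf$, which is a welcome clarification.
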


\begin{proof}
A $K$-rational rational point $\Gamma=(\gamma_{ij})$ of~$\BOdf$
corresponds to a zero of the ideal $I(\BOdf)$ which is computed
in Step~(1). If the corresponding associated graded ring 
$\grF(R_{\X_\Gamma}) \cong P/\DF(I_{\X_\Gamma})$ is to be a Gorenstein
ring, its Hilbert function $(h_0,\dots,h_\mu)$ has to be
symmetric. Therefore Step~(2) correctly returns the ideal 
of~$\BOdf$ if that Hilbert function is not symmetric.

Now assume that the Hilbert function is symmetric.
By Algorithm~\ref{alg:CheckSGor}, we know that the 0-dimensional 
scheme $\X_\Gamma$ represented by~$\Gamma$ is a strict Gorenstein scheme
if and only if~$\Gamma$ is a zero of the ideal~$I(\BOdf)+J$, and 
this observation finishes the proof.
\end{proof}

Let us complete the discussion of Corollary~\ref{cor:StrictGorLoc}
by the following observation.

\begin{remark}\label{StrictGor=StrictCB}
Suppose that $\OO=\{t_1,\dots,t_\mu\}$ is an order ideal
such that only one term has the maximal degree
$\max \{ \deg(t_1),\dots,\deg(t_\mu)\}$.
Then~\cite{KLR1}, Thm.~6.12 implies
$\NonSGor^{\rm df}_\OO = \NonSCB^{\rm df}_\OO$.
Hence we may use either Algorithm~\ref{alg:SCBLocDF} 
or Corollary~\ref{cor:StrictGorLoc} to compute this subscheme.
\end{remark}

In the setting of Example~\ref{1-x-y-z-x^2}, the Hilbert
function of $\OO=\{1,x,y,z,x^2\}$ is symmetric, and therefore
the ideals defining the closed subschemes $\NonSCB^{\rm df}_\OO$ and
$\NonSGor^{\rm df}_\OO$ of~$\BOdf$ are identical. 
On the other hand, in the setting of Example~\ref{1-x-y-x2-xy-y2-x3},
the Hilbert function of~$\OO = \{ 1,x,y,x^2,xy,y^2,x^3\}$ is not symmetric and 
Corollary~\ref{cor:StrictGorLoc} returns an ideal 
which defines~$\BOdf$.

\bigbreak
%
%

\section{The Strict Complete Intersection Locus in $\BOdf$}
\label{The Strict Complete Intersection Locus in BODF}

Recall that a 0-dimensional ring of the form $K[x_1,\dots,x_n]/I$ with
a field~$K$ and a homogeneous 0-dimensional ideal~$I$
is called a {\bf graded complete intersection} if~$I$ can be generated
by a homogeneous regular sequence of length~$n$.
In our setting, the following version of this notion
will be considered.

\begin{definition}
Let $\X$ be a 0-dimensional subscheme of~$\AA^n_K$, let
$I_\X$ be the vanishing ideal of~$\X$ in~$P$, let $R_\X$ be the
affine coordinate ring of~$\X$, and let~$\F$ be the degree 
filtration of~$R_\X$.

Then the scheme~$\X$ is called a {\bf strict complete 
intersection scheme} if the associated graded ring
$\grF(R_\X)\cong P/\DF(I_\X)$ is a graded complete intersection.
\end{definition}

Various equivalent conditions and properties of this
notion are discussed in~\cite{KLR2}, Sections~4 and~5.
In particular, Algorithm~5.4 of~\cite{KLR2} 
provides a way to check whether~$\X$ is a strict
complete intersection scheme which uses the knowledge of a
degree filtered border basis.

This brings us to the topic of this section: to compute the
locus of strict complete intersection schemes inside the 
degree filtered border basis scheme. We begin again by fixing
the terminology.

\begin{definition}\label{def:StrictCILocus}
Let $\OO=\{t_1,\dots,t_\mu\}$ be an order ideal in~$\mathbb{T}^n$.
Then the set of all $K$-rational points $\Gamma=(\gamma_{ij})
\in K^{\mu\nu}$ of the border basis scheme~$\BO$
which represent a 0-dimensional strict complete intersection 
scheme~$\X_\Gamma$ is called the {\bf set of strict 
complete intersection points} of~$\BO$.
\end{definition}

The following algorithm computes a closed subscheme
$\NonSCI^{\rm df}_\OO$ of~$\BOdf$ such that the $K$-rational
points of the open subscheme $\SCI^{\rm df}_\OO = \BOdf
\setminus \NonSCI^{\rm df}_\OO$ are precisely
the strict complete intersection points in~$\BOdf$.
This open subscheme is called the {\bf strict complete intersection 
locus} in~$\BOdf$.

\begin{algorithm}{\upshape\bf (Computing the Strict CI Locus in 
$\BOdf$)}\label{alg:SCILocDF}\\
Let $\OO=\{t_1,\dots,t_\mu\}$ be an order ideal in~$\mathbb{T}^n$, and let
$\rho=\deg(t_\mu)$. Consider the following sequence of instructions.
\begin{enumerate}
\item[(1)] Using Proposition~\ref{CharBOdf}.b, calculate 
$I(\BOdf)=I(\BO)+I_\OO\df$.

\item[(2)] If the Hilbert function $\HF_\OO$ 
of~$\OO$ is not symmetric, then return
the ideal $I(\BOdf)$ and stop.

\item[(3)] Form the generic homogeneous $\OO$-border prebasis 
$G\hom=\{g_1\hom,\dots,g_j\hom\}$
and write $g_j\hom = \sum_{i=1}^n h_{ij}x_i$ with $h_{ij}\in
K[C][x_1,\dots,x_n]$ for $j=1,\dots,\nu$.

\item[(4)] Form the matrix~$W$ of size $n\times \nu$
whose columns are given by $(h_{1j},\dots,h_{nj})\tr$
for $j=1,\dots,\nu$.

\item[(5)] Let $k=\binom{\nu}{n}$. Calculate the minors
$f_1,\dots,f_k$ of order~$n$ of~$W$.

\item[(6)] Using border division by $G\hom$, write the residue classes 
$\bar{f}_1,\dots, \bar{f}_k \in U_\OO\hom$ as $B_\OO\hom$-linear
combinations $\bar{f}_j= \sum_{i=1}^\mu \bar{a}_{ij} t_i$
with $\bar{a}_{1j},\dots,\bar{a}_{\mu j}\in B_\OO\hom$ for
$j=1,\dots,k$.

\item[(7)] Let $C\hom =\{c_{ij} \mid \deg(t_i)=\deg(b_j)\}$.
For $i=1,\dots,\mu$ and $j=1,\dots,k$, choose $a_{ij}\in K[C\hom]$
which represents $\bar{a}_{ij}$ 
with respect to $B_\OO\hom \cong K[C\hom]/\bar{I}(\BOhom)$.
Return the ideal $J=I(\BOdf)+ \langle a_{ij} \mid i\in\{1,\dots,\mu\},
j\in\{1,\dots,k\}\rangle$ and stop.
\end{enumerate}
This is an algorithm which computes an ideal~$J$ in the ring~$K[C]$
which defines a closed subscheme $\NonSCI^{\rm df}_\OO$ of~$\BOdf$.
The $K\!$-rational points of this subscheme represent the 0-dimensional 
subschemes of~$\AA^n_K$ which have a degree filtered $\OO$-border basis,
but are not strict complete intersection schemes.
\end{algorithm}

\begin{proof}
In Step~(1) we calculate the ideal defining~$\BOdf$.
In Step~(2) we check whether the Castelnuovo function of
a 0-dimensional scheme $\X_\Gamma$ represented by a $K$-rational
point~$\Gamma$ of~$\BOdf$ is symmetric.
This is certainly a necessary condition, because strict
complete intersections are strict Gorenstein schemes.
If it is not satisfied, we return the ideal of~$\BOdf$.

Since the ideal~$J$ returned by the algorithm contains 
the ideal of $\BOdf$, we clearly compute a closed subscheme of~$\BOdf$.
Notice that the ideal~$J$ does not depend on the choice of the
representatives $a_{ij}$ in Step~(7), since the elements of
$\bar{I}(\BOhom)$ are contained in $I(\BOdf)$ by 
Proposition~\ref{IhomInIdf}.

A $K$-rational point~$\Gamma$ of~$\BOdf$
is a zero of the ideal~$J$ returned by the algorithm if and only if the
vanishing ideal$I_{\X_\Gamma}$ satisfies the conditions required
in Algorithm~5.4 of~\cite{KLR2}. Hence the scheme~$\X_\Gamma$ is not a
strict complete intersection if and only if the point~$\Gamma$
is a zero of the ideal~$J$, as was to be shown.
\end{proof}

In the following we illustrate this algorithm with
a couple of examples.

\begin{example}\label{1-x-y-xy-cont3}
As in Example~\ref{1-x-y-xy}, let $\OO$ be the order ideal 
$\OO = \{1,x,y,xy\}$ in $P=K[x,y]$. Then we have
$\partial \OO = \{ x^2,y^2,x^2y,xy^2\}$, and thus $\mu=\nu=4$.
The ideal $I(\BO)=I(\BOdf)$ was determined in Examples~\ref{1-x-y-xy} 
and~\ref{1-x-y-xy-cont2}.a. It is generated by 12 quadratic
equations in 16 indeterminates.

Since the Hilbert function $(1,2,1,0,\dots)$ of~$\OO$ is symmetric, we can use
Corollary~\ref{cor:StrictGorLoc} to compute the ideal defining
the closed subscheme $\NonSGor^{\rm df}_\OO$ of~$\BOdf$
and get $I(\BOdf) + \langle c_{41}c_{42}-1 \rangle$.

Next we apply Algorithm~\ref{alg:SCILocDF} to compute the ideal
defining $\NonSCI^{\rm df}_\OO$. The matrix ~$W$ in Step~(4) is
$$
W \;=\; \begin{pmatrix}
-x_2 c_{41} +x_1 & \;-x_2 c_{42}\; & \;x_1x_2\; & \;x_2^2\; \\
0 & x_2 & 0 & 0
\end{pmatrix}
$$
This matrix has three non-zero maximal minors, namely
$f_1= -x_2^2 c_{41} + x_1x_2$, $f_2=-x_1x_2^2$, and $f_3=-x_2^3$.
In Step~(6) we apply border division by 
$$
G\hom \;=\; \{ \, x_1^2 - x_1x_2 c_{41},\; x_2^2 - x_1x_2c_{42},\;
x_1^2x_2,\; x_1x_2^2\,\} 
$$ 
to these polynomials. Only the division of~$f_1$ by $G\hom$ yields
a non-zero result, namely $x_1x_2(-c_{41}c_{42}+1)$. 
Consequently, the algorithm returns the ideal $I(\BO) + \langle 
c_{41}c_{42}-1\rangle$ in Step~(7).

Thus the ideals which define $\NonSGor^{\rm df}_\OO$ and
$\NonSCI^{\rm df}_\OO$ are identical.
This is in agreement with the fact that in codimension~2 the
notions of a Gorenstein ring and a complete intersection 
are equivalent.
\end{example}

In order to find a case where the loci of strict Gorenstein and
strict complete intersection scheme differ, we have to consider 0-dimensional 
schemes in~$\AA^n_K$ with $n\ge 3$. The following case is one of
the easiest ones.

\begin{example}\label{1-3-3-1}
Consider the order ideal $\OO=\{1,x,y,z,x^2,xy,xz,x^3\}$
in $P=K[x,y,z]$. Its Hilbert function $\HF_\OO=(1,3,3,1,0,\dots)$ is
symmetric. The border of~$\OO$ is
$\partial\OO =  \{\, y^2,\, yz,\, z^2,\, x^2y,\, x^2z,\, xy^2,\, 
xyz,\, xz^2,\, x^4,\, x^3y,\, x^3z \,\}$.
Consequently, we have $\mu=8$ and $\nu=11$,
and the ring~$K[C]$ has 88 indeterminates.
Using Proposition~\ref{CharBOdf}.b, we calculate 
$I(\BOdf)=I(\BO)+I_\OO\df$. The resulting ideal
has 147 generators, 3 of which are indeterminates,
and the remaining 144 are quadratic polynomials.

Using Corollary~\ref{cor:StrictGorLoc}, we compute an
ideal which defines $\NonSGor^{\rm df}_\OO$ and get
$I(\BOdf) + \langle f \rangle$, where~$f$ is a polynomial
of the form $f=g^2$ with
$$
g \;=\; c_{85}^2 c_{86} - 2 c_{84} c_{85} c_{87} + c_{48}^2 c_{88}
+ c_{87}^2 - c_{86} c_{88}
$$
Thus the strict Gorenstein locus in~$\BOdf$ is the complement of 
the hypersurface section defined by~$f$. Notice that the polynomials~$f$
and~$g$ are homogeneous of arrow degree zero.

Next we apply Algorithm~\ref{alg:SCILocDF} to calculate
an ideal which defines $\NonSCI^{\rm df}_\OO$. 
The matrix~$W$ in Step~(4) is 
$$
W \;=\; \begin{pmatrix}
\;p_1\; & \;p_2\; & \;p_3\; & \;p_4\; & \;p_5\; & \;p_6\; & 
   \;p_7\; & \;p_8\; & \;x_1^3\; & \;x_1^2 x_2\; & \;x_1^2x_3\\
x_2 & x_3 & 0   & 0 & 0 & 0 & 0 & 0 & 0 & 0 & 0\\
0   & 0   & x_3 & 0 & 0 & 0 & 0 & 0 & 0 & 0 & 0
\end{pmatrix}
$$
where $p_1 = -x_1 c_{51} -x_2 c_{61} -x_3 c_{71}$, $p_2 = -x_1 c_{52} -x_2
c_{62} -x_3 c_{72}$, $p_3 = -x_1 c_{53} - x_2 c_{63} -x_3 c_{73}$,
$p_4= -x_1^2 c_{84} + x_1x_2$, $p_5 = -x_1^2c_{85} +x_1x_3$,
$p_6 = -x_1^2 c_{86} + x_2^2$, $p_7 = -x_1^2 c_{87} + x_2x_3$, and
$p_8 = -x_1^2c_{88} + x_3^2$. This matrix has 17 non-zero maximal minors,
and only one of them yields a non-zero remainder after division 
by~$G\hom$, namely $h\cdot x^3$, where~$h$ is the
polynomial
\begin{align*}
h \;=\; &-c_{52}c_{61}c_{85} +c_{51}c_{62}c_{85} -c_{53}c_{71}c_{85}
+c_{52}c_{72}c_{85} -c_{63}c_{71}c_{87} +c_{62}c_{72}c_{87}\\
& +c_{62}c_{71}c_{88} -c_{61}c_{72}c_{88} + c_{72}^2 c_{88} 
-c_{71}c_{73}c_{88} + c_{52}c_{87} -c_{51}c_{88}
\end{align*}

Altogether, the result is the ideal $I(\BOdf) +\langle h \rangle$
which defines $\NonSCI^{\rm df}_\OO$.
At this point we may check that $I(\BOdf) + \langle f\rangle
\subsetneq I(\BOdf) + \langle g\rangle =
I(\BOdf) + \langle h\rangle$. 
This shows that the locus of strict complete intersections 
in~$\BOdf$ is properly contained
in the locus of strict Gorenstein schemes in~$\BOdf$.
As Example~\ref{ex:8pointscubic} and~\cite{KLR2}, Example~4.6 indicate, 
eight points on a suitably chosen twisted cubic curve yield a point in the
strict Gorenstein locus of~$\BOdf$ which is not contained in the
strict complete intersection locus.
\end{example}

\bigbreak
%
%

\section{The Hilbert Stratification of~$\BO$}
\label{The Hilbert Stratification of BO}

In general, the border basis scheme~$\BO$ contains
$K\!$-rational points which represent 0-dimensional schemes
having different affine Hilbert functions. In this section
we describe and calculate the stratification of~$\BO$
determined by these affine Hilbert functions. 

As in the preceding sections, let $K$ be a field, 
let $P=K[x_1,\dots, x_n]$,
and let $\OO=\{t_1,\dots,t_\mu\}$ be an order ideal in~$\mathbb{T}^n$
with border $\partial\mathcal{O}=\{b_1,\dots,b_\nu\}$.
Recall that we always order~$\OO$ such that
$\deg(t_1) \le \cdots \le \deg(t_\mu)$. In particular,
we have $t_1=1$ (see Assumption~\ref{orderOO}). 
In this setting, we introduce the following terminology
(see also Definition~\ref{def:affineHF}).

\begin{definition}\label{succeq}
Given a $K$-rational point $\Gamma=(\gamma_{ij})$ of~$\BO$,
let $G_\Gamma$ be the corresponding border basis, and let
$R_\Gamma = P/\langle G_\Gamma\rangle$ be the
affine coordinate ring of the
0-dimensional affine scheme~$\X_\Gamma$ represented by~$\Gamma$.
\begin{enumerate}
\item[(a)]  The affine Hilbert function of~$R_\Gamma$ is
called the affine Hilbert function {\bf associated to~$\Gamma$}.
The values of the affine Hilbert function associated to~$\Gamma$
will be denoted by
$$
\HF^a_{R_\Gamma} \;=\; (H_0,\, H_1,\, H_2, \dots)
$$
where $H_i=\HF^a_{R_\Gamma}(i)$ for $i\ge 0$.

\item[(b)] The values of the Castelnuovo function of~$R_\Gamma$
will be denoted by
$$
\Delta\HF^a_{R_\Gamma} = (h_0,\, h_1,\, h_2,\dots)
$$
where $h_0=1$ and $h_i=H_i-H_{i-1}$ for $i\ge 1$.

\item[(c)] We say that a sequence of non-negative integers
$(H_0,H_1,\dots)$ {\bf dominates} a sequence of non-negative integers
$(H_0', H_1',\dots)$
if we have $H_i\ge H_i'$ for all $i\ge 0$. In this case
we also write $(H_0,H_1,\dots) \succeq (H_0',H_1',\dots)$.

\item[(d)] Given $n,\mu\in \mathbb{N}_+$, a sequence $\H=(H_0,H_1,\dots)$
of non-negative integers is called {\bf $(n,\mu)$-admissible}
if there exists a 0-dimensional affine subscheme~$\X$ 
in~$\mathbb{A}^n_K$ of length~$\mu$ such that $\H=\HFa_\X$.

\item[(e)]  Given $n,\mu\in \mathbb{N}_+$, the sequence 
$\HF^{(n,\mu)}=(H_0,H_1,\dots)$ such that we have $H_i = \min \{\mu,\;
\HFa_P(i) \}$ for all $i\ge 0$ is called the {\bf generic affine Hilbert
function} for a scheme of length~$\mu$ in~$\mathbb{A}^n_K$.
\end{enumerate}
\end{definition}

In~\cite{KR2}, Section~5.5, precise formulas are given which
characterize admissible Hilbert functions. Also, note that 
from~\cite{KR2}, Theorem~5.5.32, it follows that~$\HF^{(n,\mu)}$
is admissible.

\begin{proposition}\label{dominatingHF}
Let $\Gamma$ be a $K$-rational point of~$\BO$, and
let $R_\Gamma$ be the affine coordinate ring
of the 0-dimensional affine scheme represented by~$\Gamma$.
Then we have 
$\HF^{(n,\mu)} \succeq  \HF^a_{R_\Gamma}  \succeq \HF^a_\OO$.
\end{proposition}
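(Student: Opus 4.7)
The plan is to establish the two inequalities separately: the upper bound $\HF^{(n,\mu)} \succeq \HFa_{R_\Gamma}$ by a standard surjection/length argument, and the lower bound $\HFa_{R_\Gamma} \succeq \HFa_\OO$ by exhibiting enough linearly independent elements of the right order in~$R_\Gamma$.

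For the upper bound, I would fix $i\ge 0$ and note that the canonical surjection $P \twoheadrightarrow R_\Gamma$ restricts to a surjection $F_i P \twoheadrightarrow F_i R_\Gamma$ by the definition of the induced degree filtration. Hence $\HFa_{R_\Gamma}(i) \le \HFa_P(i)$. On the other hand, $F_i R_\Gamma$ is a $K$-subspace of the $\mu$-dimensional space $R_\Gamma$, so $\HFa_{R_\Gamma}(i) \le \mu$. Combining these two inequalities and invoking the definition of $\HF^{(n,\mu)}(i) = \min\{\mu,\HFa_P(i)\}$ yields the desired bound.

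For the lower bound, the key input is the theorem recalled just after Definition~\ref{borderbasisfamily}, which states that the residue classes of the elements of~$\OO$ form a $B_\OO$-basis of~$U_\OO$. Specializing this universal statement at the $K$-rational point~$\Gamma$ of~$\BO$, the residue classes $\bar t_1,\dots,\bar t_\mu$ form a $K$-basis of the fiber $R_\Gamma$. In particular they are $K$-linearly independent in $R_\Gamma$, so every subset of them is linearly independent as well. Now, for any $t_j\in\OO$ with $\deg(t_j)\le i$, one has $\bar t_j\in F_i R_\Gamma$, because $t_j\in F_iP$ maps into $F_i R_\Gamma$. Therefore the $\HFa_\OO(i)=\#\{t_j\in\OO\mid\deg(t_j)\le i\}$ linearly independent residue classes all lie in $F_i R_\Gamma$, giving $\HFa_{R_\Gamma}(i)\ge\HFa_\OO(i)$.

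I do not anticipate a real obstacle: both inequalities reduce to bookkeeping once the universal basis theorem is invoked. The only small subtlety worth stating cleanly is that the ordering convention $\deg(t_1)\le\cdots\le\deg(t_\mu)$ in Assumption~\ref{orderOO} is what guarantees that "elements of $\OO$ of degree $\le i$" has the intended meaning and that the counting formula for $\HFa_\OO$ in Definition~\ref{def:affineHF}(c) matches the number of basis elements landing in $F_i R_\Gamma$.
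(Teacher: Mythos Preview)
Your proof is correct and follows essentially the same approach as the paper's own proof, which simply observes that the generic Hilbert function exhibits the maximal possible growth (your surjection and dimension bounds spell this out) and that the terms of~$\OO$ are $K$-linearly independent in~$R_\Gamma$ (exactly your lower-bound argument). You have merely supplied more detail than the paper's two-sentence version.
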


\begin{proof}
The generic affine Hilbert function for a scheme
of length~$\mu$ dominates the affine Hilbert function 
of~$\X_\Gamma$, since its values exhibit the maximal possible growth.

On the other hand, the  affine Hilbert function of~$R_\Gamma$ 
dominates~$\HFa_\OO$, since the terms of~$\OO$ are linearly 
independent in~$R_\Gamma$.
\end{proof}

Here is an easy example of an order ideal~$\OO$ such that 
there are 0-dimensional schemes represented by rational points in $\BO$
having different associated affine Hilbert functions.

\begin{example}\label{differentHF}
Let $P=K[x,y]$, and let $\OO = \{1,y, y^2\} \subseteq \mathbb{T}^2$. 
Then we have $\HF^a_\OO = (1,2, 3,3,\dots)$.
Now let $I=\langle x, y^3\rangle$ and $J=\langle x-y^2, y^3\rangle$.
The set $\{x, y^3\}$ is the reduced $\tt Lex$-Gr\"obner basis of~$I$,
and the set$\{x-y^2,\, y^3\}$ 
is the reduced $\tt Lex$-Gr\"obner basis of~$J$.
Consequently, the rings $P/I$ and $P/J$ are coordinate rings
of 0-dimensional affine schemes represented by $K$-rational 
points in~$\BO$. Then we have $\HF^a_{P/I} = (1,2,3,3,\dots)$
and $\HF^a_{P/J}= (1,3,3,\dots)$.

From the growth conditions for Hilbert functions in~\cite{KR2},
Section~5.5, it follows that $(1,2,3,3,\dots)$ and ($1,3,3,\dots)$ 
are the only admissible affine Hilbert functions for a 0-dimensional 
scheme of length 3 in $\mathbb{A}^2_K$. Note that
$\HF^{(2,3)}=\HF^a_{P/J} \succ \HF^a_{P/I}$. 
\end{example}

In this example all admissible affine Hilbert 
functions are totally ordered with respect to~$\succ$.
The following example shows that this is not true in general.

\begin{example}\label{noncomparableHF}
Consider all admissible affine
Hilbert functions with $n=3$ and $\mu=11$. We have
the relations
$(1, 4, 10, 11, 11,\dots) \succ (1, 4, 9, 10, 11, 11,\dots)$ 
and $(1, 4, 10, 11, 11,\dots) \succ (1, 4, 8, 11, 11,\dots)$, 
but the sequences $(1,4,9, 10, 11, 11,\dots)$ and $(1, 4, 8, 11, 11,\dots)$  
are not comparable with respect to~$\succ$.
\end{example}

Our next goal is to describe the locus in~$\BO$ of all
0-dimensional affine schemes whose affine Hilbert function
is a given function~$\H = (H_0,H_1,\dots)$.
For this purpose we first determine the equations defining
a closed subscheme of~$\BO$ such that its $K$-rational points represent
those 0-dimensional affine schemes whose affine Hilbert function 
is dominated by~$\H$. 
As a preliminary step, we determine the closed subscheme of~$\BO$ 
corresponding to all $K$-rational points whose associated affine Hilbert 
function satisfies $H_k\le N$ for fixed given numbers $k\ge 0$ and $N>0$
as follows.

\begin{algorithm}\label{alg:oneHi}
Let $\OO=\{t_1,\dots,t_\mu\}$ be an order ideal,
let $k\ge 0$, and let $N\ge 1$. Consider the following
sequence of instructions.
\begin{enumerate}
\item[(1)] Let $u_1,\dots,u_m \in \mathbb{T}^n$
be the terms of degree $\le k$.

\item[(2)] For $\ell=1,\dots,m$, compute the matrix
$\mathcal{U}_\ell = u_\ell(\A_1,\dots,\A_n)$
in the ring $\Mat_\mu(K[C])$, where $\A_1,\dots,\A_n$
are the generic multiplication matrices for~$\mathcal{O}$.

\item[(3)] Form the matrix $\mathcal{M}$ in
$\Mat_{\mu, m}(K[C])$ whose $\ell$-th column is the
first column of~$\mathcal{U}_\ell$ for $\ell=1,\dots,m$.

\item[(4)] Compute the ideal~$D$ of minors of size $N+1$
of~$\mathcal{M}$ and return $J=I(\BO)+D$.
\end{enumerate}
This is an algorithm which computes an ideal~$J$ in~$K[C]$
such that $\mathcal{Z}(J)$ is the closed subscheme of~$\BO$
corresponding to all $K$-rational points whose associated 
affine Hilbert function satisfies $H_k\le N$.
\end{algorithm}

\begin{proof}
The universal border basis family~$U_\mathcal{O}$ is a free
$B_\mathcal{O}$-module with basis~$\mathcal{O}$, and for
$\nu=1,\dots,n$ the generic multiplication matrix $\A_\nu$
describes the multiplication by~$x_\nu$ in this basis.
Hence, for $\ell=1,\dots,m$, the matrix $\mathcal{U}_\ell$
describes the multiplication by~$u_\ell$ in the basis~$\mathcal{O}$.
Thus $t_1=1$ implies that the first column of~$\mathcal{U}_\ell$
contains the coordinates of~$u_\ell$ in the basis~$\mathcal{O}$.
Now the claim follows from the remark that we have
$H_k\le N$ for the affine Hilbert function associated to a $K$-rational
point~$\Gamma$ of~$\BO$ if and only if at most~$N$ terms
in the set $\{u_1,\dots,u_m\}$ are $K$-linearly independent
modulo $\langle G_\Gamma\rangle$.
\end{proof}

Let us check the results of this algorithm in a simple case.

\begin{example}\label{1-x-y-x2-x3}
Let $\OO$ be the order ideal $\OO = \{1,x,y,x^2,x^3\}$ in~$\mathbb{T}^2$.
The border of~$\OO$ is $\partial{\OO} = \{ xy,y^2,x^2y,x^4, x^3y \}$.
We have $\mu=5$, and the generic affine Hilbert function is $(1,3,5,5,\dots)$.

When we use the algorithm to compute the closed subscheme of~$\BO$
corresponding to $H_2\le 4$, we obtain the ideal
$I(\BO)+ \langle c_{51},c_{52}\rangle$. Notice that this ideal equals
the ideal $I(\BOdf)$. This is due to the fact that
the only possible affine Hilbert function of a $K$-rational point~$\Gamma$
with $\HF_{R_\Gamma}(2)\le 4$ is $(1,3,4,5,5,\dots)$, and thus
these are exactly the points for which~$\OO$ is a degree filtered $K$-basis
of~$R_\Gamma$.

However, if we use the algorithm to compute the closed subscheme
of~$\BO$ corresponding to $H_2\le 3$, we obtain the unit ideal 
$\langle 1\rangle$. Although there exists an affine Hilbert function
satisfying this constraint, namely $(1,2,3,4,5,5,\dots)$,
that affine Hilbert function is not attained by any 0-dimensional 
affine scheme~$\X$ whose vanishing ideal has an $\OO$-border basis, because
$\{1,x,y\} \subset \OO$ implies that there is no linear polynomial
in the vanishing ideal of~$\X$.
\end{example}

The above algorithm allows us to describe the following
subscheme of~$\BO$.

\begin{definition}\label{def:BOHbar}
Let $\OO=\{t_1,\dots,t_\mu\}$ be an order ideal in~$\mathbb{T}^n$,
and assume that $\H=(H_0,H_1,\dots)$ is an $(n,\mu)$-admissible affine 
Hilbert function. Then the closed subscheme of~$\BO$ 
which is the closure of the set of $K$-rational points~$\Gamma$
for which~$\HF^a_{R_\Gamma}$ is dominated by~$\H$ is called 
the {\bf $\Hbar$-subscheme} of~$\BO$ 
and is denoted by~$\BO(\Hbar)$.
\end{definition}

An ideal defining $\BO(\Hbar)$ can be calculated as follows.

\begin{algorithm}{\upshape\bf (Computing the $\Hbar$-Subscheme of~$\BO$)} 
\label{alg:BOHbar}\\
Let $\mathcal{O}=\{t_1,\dots,t_\mu\}$ be an order ideal, and let
$\H=(H_0,H_1,\dots)$ be a sequence of non-negative integers 
which dominates~$\HFa_\OO$.
The following instructions define an algorithm
which computes an ideal $I(\BO(\Hbar))$ in $K[C]$
that defines~$\BO(\Hbar)$.
\begin{enumerate}
\item[(1)] Let $\rho=\min\{i\ge 0 \mid H_i=\mu\}$.

\item[(2)] For $i=1,\dots,\rho-1$, use Algorithm~\ref{alg:oneHi}
to compute an ideal~$J_i$ in~$K[C]$ which defines the closed 
subscheme of~$\BO$ whose $K$-rational points~$\Gamma$ satisfy
$\HF^a_{R_\Gamma}(i)\le H_i$.

\item[(3)] Return the ideal $J_1+\cdots+ J_{\rho-1}$.

\end{enumerate}
\end{algorithm}

\begin{proof}
The finiteness of this algorithm is clear.
The correctness  follows from Algorithm~\ref{alg:oneHi}, since an affine
Hilbert function~$\HFa_{R_\Gamma}$ is dominated by~$\H$
if and only if each of its values is less than or equal
to the corresponding value of~$\H$.
\end{proof}

Let us apply this algorithm in the setting of Example~\ref{1-x-y-x2-x3}.

\begin{example}\label{1-x-y-x2-x3-cont}
In~$\mathbb{T}^2$ we consider the  order ideal $\OO=\{1,x,y,x^2,x^3\}$,
and we let $\mathcal{H}=(1,3,5,5,\dots)$.
When we compute the ideal $I(\BO(\Hbar))$ using
the algorithm, we get~$I(\BO)$. This means that we have
$\BO(\Hbar)=\BO$ here, in agreement with the observation
that~$\H$ is the generic affine Hilbert function for $n=2$ and~$\mu=5$.
\end{example}

\begin{definition}\label{def:BOH}
Let $\mathcal{O}=\{t_1,\dots,t_\mu\} \in \mathbb{T}^n$ 
be an order ideal, and let $\H=(H_0,H_1,\dots)$ be an 
$(n,\mu)$-admissible affine Hilbert function.
The open subscheme of~$\BO(\Hbar)$ which is the 
complement in~$\BO(\Hbar)$ of the closure of the set  
of all \hbox{$K$-rational} points~$\Gamma$ for which the associated affine Hilbert function
$\HF^a_{R_\Gamma}$ is strictly dominated by~$\H$, is called the {\bf $\H$-subscheme} 
of~$\BO$ and is denoted by~$\BO(\H)$.
\end{definition}

In other words, the $K$-rational points~$\Gamma$ of~$\BO(\H)$
correspond to 0-dimensional rings $R_\Gamma$ whose
affine Hilbert function is~$\H$.
The following algorithms allows us to clarify and compute 
the structure of $\BO(\H)$.

\begin{algorithm}{\upshape \bf (Computing the $\H$-Subscheme of~$\BO$)} 
\label{alg:BOH}\\
Let $\mathcal{O}=\{t_1,\dots,t_\mu\}$ be an order ideal, and let
$\H=(H_0,H_1,\dots)$ be a sequence of non-negative integers 
which dominates~$\HFa_\OO$.
The following instructions define an algorithm
which computes an ideal $I(Z_\OO(\Hbar))$ in $K[C]$ which
contains $I(\BO(\Hbar))$ and defines
a closed subscheme $Z_\OO(\Hbar)$ of~$\BO(\Hbar)$
with $\BO(\H) = \BO(\Hbar) \setminus Z_\mathcal{O}(\Hbar)$.
\begin{enumerate}
\item[(1)] Let $\rho=\min\{i\ge 0 \mid H_i=\mu\}$.

\item[(2)] For $i=1,\dots,\rho$, let $\H'_i =
(1,H_1,\dots,H_{i-1},H_i-1,H_{i+1},H_{i+2},\dots)$.

\item[(3)] For $i=1,\dots,\rho$, check whether
$H_i-1 \ge \HFa_\OO(i)$. If this is the case,
use Algorithm~\ref{alg:BOHbar}
to compute the ideal $J_i$ defining the
${\overline{\H'_i}}^{\mathstrut}$-subscheme of~$\BO$.
Otherwise, let $J_i=\langle 1\rangle$.

\item[(4)] Compute the ideal $J_1\cap\cdots\cap J_\rho$
and return it.
\end{enumerate}
\end{algorithm}

\begin{proof}
Note that, for a $K$-rational point $\Gamma\in \BO(\Hbar)$, 
an affine Hilbert function $\HFa_{R_\Gamma}$ is strictly
smaller than~$\H$ if and only if at least one of its values 
is strictly smaller than the corresponding value of~$\H$.
Since~$\H$ dominates the affine Hilbert function of~$\OO$
and since~$\OO$ is $K$-linearly independent in~$R_\Gamma$,
there can be no such point~$\Gamma$ if $H_i-1<\HFa_\OO(i)$.
Thus we can drop the corresponding ideals $J_i$ from the intersection.
Finally, we note that each ideal $J_i$ contains $I(\BO(\Hbar))$,
and therefore also the resulting intersection ideal does so.
\end{proof}

Let us apply this algorithm to the setting of 
Example~\ref{1-x-y-x2-x3-cont}.

\begin{example}\label{1-x-y-x2-x3-cont-again}
As in Example~\ref{1-x-y-x2-x3-cont}, let
$\OO=\{1,x,y,x^2,x^3\} \subseteq \mathbb{T}^2$,
and let $\H=(1,3,5,5,\dots)$. We use Algorithm~\ref{alg:BOH} 
to calculate the ideal $I(Z_\OO(\Hbar))$
defining the complement of~$\BO(\H)$ in~$\BO(\Hbar)$. 
Since $3=\HFa_\OO(1)$, we do not need to compute~$J_1$.
In order to find~$J_2$, it suffices to take $I(\BO)$ and to add
the ideal computed by Algorithm~\ref{alg:oneHi} for $i=2$ and $N=4$.
The result is $I(\BO) + \langle c_{51},\, c_{52}\rangle$, as
noted in Example~\ref{1-x-y-x2-x3}.
Altogether, we obtain $I(Z_\OO(\Hbar))=
I(\BO) + \langle c_{51},\, c_{52}\rangle$.
\end{example}

Notice that the algorithm correctly returns $I(\BO(\Hbar))$
if the function~$\H$ is not $(n,\mu)$-admissible. The following
example illustrates this behaviour.

\begin{example}
In~$\mathbb{T}^2$, consider the order ideal $\OO=\{1,\, x,\, x^2,\, x^3\}$.
The sequence $\H=(1,2,4,4,\dots)$ dominates the affine Hilbert function
$\HFa_\OO = (1,2,3,4,4,\dots)$ of~$\OO$.
Algorithm~\ref{alg:BOH} yields the ideals $J_1=\langle 1 \rangle$
and $J_2=I(\BO(\overline{\H'_2}))=I(\BO(\Hbar))$, and hence 
returns $I(\BO(\Hbar))$. Thus it follows that 
$\BO(\H)=\BO(\Hbar) \setminus \BO(\Hbar)=\emptyset$.
This is in agreement with the fact that~$\H$ is not $(2,4)$-admissible.
\end{example}

\begin{corollary}\label{HilbertStrat}
Let $\mathcal{O}=\{t_1,\dots,t_\mu\}$ be an order ideal
in~$\mathbb{T}^n$.
\begin{enumerate}
\item[(a)] Let $\H$ be an $(n,\mu)$-admissible affine Hilbert function
with $\HF^{(n,\mu)} \succeq \H \succeq \HFa_\OO$.
Then $\BO(\H)$ is an open subscheme  
of~$\BO(\Hbar)$ and a locally closed subscheme of~$\BO$.
It is called the {\bf $\H$-Hilbert stratum} of~$\BO$.

\item[(b)] Let $\mathbb{H}_\OO$ be the set of all
$(n,\mu)$-admissible affine Hilbert functions~$\H$
with $\HF^{(n,\mu)} \succeq \H \succeq \HFa_\OO$.
Then we have a disjoint union
$\BO = \bigcup_{\H \in \mathbb{H}^\OO} \BO(\H)$.
This is called the {\bf Hilbert stratification} of~$\BO$.
\end{enumerate}
\end{corollary}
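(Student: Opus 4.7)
The plan is to assemble the statement directly from the preceding definitions and algorithms, with essentially no new computation needed. For part~(a), I would invoke Definition~\ref{def:BOHbar} together with Algorithm~\ref{alg:BOHbar}, which exhibit $\BO(\Hbar)$ as a closed subscheme of $\BO$ cut out by the explicitly computed ideal $I(\BO(\Hbar))$. Then Definition~\ref{def:BOH} together with Algorithm~\ref{alg:BOH} presents $\BO(\H)$ as the complement inside $\BO(\Hbar)$ of the closed subscheme $Z_\OO(\Hbar)$, so $\BO(\H)$ is open in $\BO(\Hbar)$. Since an open subscheme of a closed subscheme of $\BO$ is locally closed in $\BO$, this settles~(a) and justifies the name ``Hilbert stratum''.

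For part~(b), I would argue on the level of $K$-rational points, in accordance with the ``up to radical'' convention of the paper. Given any $K$-rational point $\Gamma$ of $\BO$, set $\H_\Gamma := \HF^a_{R_\Gamma}$. By Definition~\ref{succeq}(d) this function is $(n,\mu)$-admissible, and by Proposition~\ref{dominatingHF} it satisfies $\HF^{(n,\mu)} \succeq \H_\Gamma \succeq \HFa_\OO$, so $\H_\Gamma \in \mathbb{H}_\OO$. Moreover $\Gamma \in \BO(\H_\Gamma)$: it lies in $\BO(\overline{\H_\Gamma})$ because $\H_\Gamma \preceq \H_\Gamma$, and it is not in the locus where $\HF^a_{R_\Gamma}$ is strictly dominated by~$\H_\Gamma$, because $\H_\Gamma$ is not strictly smaller than itself. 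This shows that the union on the right covers $\BO$. For disjointness, a point $\Gamma \in \BO(\H) \cap \BO(\H')$ must have its affine Hilbert function dominated by both $\H$ and $\H'$, but not strictly dominated by either; hence $\HF^a_{R_\Gamma}$ equals both $\H$ and $\H'$, forcing $\H = \H'$.

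The only subtle point in this argument is verifying that the set-theoretic descriptions of $\BO(\Hbar)$ and $Z_\OO(\Hbar)$ really match the stated conditions on affine Hilbert functions, rather than their Zariski closures being strictly larger. This rests on the upper semicontinuity of rank already used implicitly in Algorithm~\ref{alg:oneHi}: each condition $H_i \le N$ is defined by the vanishing of the size-$(N+1)$ minors of the coefficient matrix of $u_1,\dots,u_m$ in the basis $\OO$, so it already carves out a closed subset at the level of points and no closure operation enlarges it. Given this, the complement of ``strictly dominated by~$\H$'' inside ``dominated by~$\H$'' consists precisely of the functions equal to $\H$, and the union $\BO = \bigcup_{\H\in\mathbb{H}_\OO}\BO(\H)$ is disjoint by the uniqueness of the affine Hilbert function of a 0-dimensional scheme.
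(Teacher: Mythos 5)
Your proposal is correct and follows essentially the same route as the paper, whose proof simply cites Algorithms~\ref{alg:BOHbar} and~\ref{alg:BOH} for part~(a) and Proposition~\ref{dominatingHF} for part~(b); you merely spell out the details (closed-by-minors for domination, openness of the complement of $Z_\OO(\Hbar)$, and uniqueness of the affine Hilbert function for disjointness) that the paper leaves implicit.
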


\begin{proof}
The claims in~(a) follow from Algorithm~\ref{alg:BOH}
and Algorithm~\ref{alg:BOHbar}.
Claim~(b) follows from Proposition~\ref{dominatingHF}.
\end{proof}

The order ideals making up the Hilbert stratification
of~$\BO$ can be determined as follows.

\begin{remark}
Let $\OO=\{t_1,\dots,t_\mu\}$ be an order ideal 
in~$\mathbb{T}^n$. Consider the
recursively defined function ${\tt AllStrata}(\OO,\H)$
defined by the following steps.

\begin{enumerate}
\item[(1)] Let $S$ be the set consisting of~$\H$.

\item[(2)] Let $\H=(H_0,H_1,\dots)$, and let
$\rho=\min \{ i\ge 0 \mid H_i=\mu\}$.

\item[(3)] For $i=1,\dots,\rho$, check whether
$H_i-1\ge \HFa_\OO(i)$. If this is the case,
compute ${\tt AllStrata}(\OO,\H')$, where
$\H'=(H_0,\dots,H_{i-1},H_i-1,H_{i+1},\dots)$,
append it to~$S$, and remove doubles in~$S$.

\item[(4)] If $\H=\HFa_\OO$ then return~$S$ and stop.
\end{enumerate}
Let $S$ be the result of calling ${\tt AllStrata}(\OO,\HF^{(n,\mu)})$.
For every sequence $\H$ in~$S$, use~\cite{KR2}, Theorem 5.5.32.a, to
check whether~$\H$ is admissible. If this is not the case, remove~$\H$
from~$S$. The resulting set~$S$ is the set of all $(n,\mu)$-admissible affine 
Hilbert functions with $\HF^{(n,\mu)} \succeq \H \succeq \HFa_\OO$.
\end{remark}

\bigbreak
%
%

\section{An Open Covering of the Hilbert Stratum}
\label{An Open Covering of the Hilbert Stratum}

In several of the algorithms in the next sections
it will be necessary not only to fix the Hilbert function
of $R=P/I$, but also a degree filtered $K$-basis of~$R$.

As above, let $K$ be a field, and let
$\mathcal{O}=\{t_1,\dots,t_\mu\}$ be an order ideal
with border $\partial\mathcal{O}=\{b_1,\dots,b_\nu\}$.
In this setting, we introduce the following subschemes
of~$\BO$.

\begin{definition}
Let $\mathcal{O}=\{t_1,\dots,t_\mu\}$ be an order ideal.
\begin{enumerate}
\item[(a)] Let $\mathcal{O}'=\{t'_1,\dots,t'_\mu\}$
be a further order ideal containing~$\mu$ terms.
We say that~$\mathcal{O}'$ is a {\bf degree filtered order ideal
for~$\mathcal{O}$} if there exists an ideal~$I\subseteq P$
which has an $\mathcal{O}$-border basis and for which
$\mathcal{O}'$ is a degree filtered $K$-basis of~$P/I$.

\item[(b)] Let $\mathcal{O}'$ be a degree filtered order
ideal for~$\mathcal{O}$, and let $\H=(H_0,H_1,\dots)$ be the
affine Hilbert function of~$\OO'$. The open subscheme of~$\BO(\H)$
whose $K$-rational points represent the 0-dimensional affines 
schemes~$\X$ such that $\mathcal{O}'$ is a degree filtered 
$K\!$-basis of~$R_\X$ (and hence~$\H=\HFa_\X$) is called 
the {\bf $\mathcal{O}'$-DFB subscheme} 
and is denoted by $\BO^{\rm dfb}(\mathcal{O}')$.
\end{enumerate}
\end{definition}

The next algorithm shows that $\BO^{\rm dfb}(\mathcal{O}')$ is in
fact an open subscheme of~$\BO(\Hbar)$ which is contained 
in~$\BO(\H)$.

\begin{algorithm}{\upshape\bf (Computing a DFB 
Subscheme)}\label{alg:DFB}\\
Let $\mathcal{O}=\{t_1,\dots,t_\mu\}$ be an order ideal 
in~$\mathbb{T}^n$, let $\H$ be an
$(n,\mu)$-admissible Hilbert function which dominates $\HFa_\OO$,
and let $\mathcal{O}'=\{t'_1,\dots,t'_\mu\}$ be an
order ideal such that $\HFa_{\OO'}=\H$.
Consider the following instructions.
\begin{enumerate}
\item[(1)] Using Algorithm~\ref{alg:BOHbar}, compute an ideal
$I(\BO(\Hbar))$ which defines $\BO(\Hbar)$.

\item[(2)] Using Algorithm~\ref{alg:BOH}, applied to~$\H$, 
compute an ideal $I(Z_\OO(\Hbar))$
which defines the closed subscheme $Z_\OO(\Hbar)$
of~$\BO(\Hbar)$.

\item[(3)] For $i=1,\dots,\mu$, calculate the matrix
$t'_i(\A_1,\dots\A_n)$, where $\A_1,
\dots,\A_n$ are the generic multiplication matrices
for~$\mathcal{O}$.

\item[(4)] Form the matrix $T'$ in~$\Mat_\mu (K[C])$
consisting of the first columns of the matrices $t'_i(\A_1,\dots,
\A_n)$ for $i=1,\dots,\mu$.

\item[(5)] Return the triple $(I(\BO(\Hbar)),
I(Z_\OO(\Hbar)),\det(T'))$.
\end{enumerate}

This is an algorithm which computes a triple
$(I(\BO(\Hbar)), I(Z_\OO(\Hbar)),\det(T'))$ such that,
if we let $I^{\rm dfb}_{\OO\OO'} = 
I(Z_\OO(\Hbar)) \cap \langle \det(T') \rangle $,
then $J_{\OO'}=I^{\rm dfb}_{\OO\OO'} + I(\BO(\Hbar))$
satisfies $\BO^{\rm dfb}(\OO')=
\BO(\Hbar)\setminus \mathcal{Z}(J_{\OO'})
= \BO(\H) \setminus \mathcal{Z}(\det(T'))$.
\end{algorithm}

\begin{proof}
Since $\H=\HFa_{\OO'}$, the set~$\mathcal{O}'$ is a degree filtered
$K$-basis of~$R_{\X_\Gamma}$ for some closed
point~$\Gamma$ of~$\BO$ representing a 0-dimensional
affine scheme~$\X_\Gamma$ if and only if it is a $K$-basis.

For $i=1,\dots,\mu$, the first column of the matrix
$t'_i(\A_1,\dots,\A_n)$ contains
the coordinates of~$t'_i$ in the $K$-basis $\mathcal{O}$
of~$U_\mathcal{O}$, because this matrix is the
multiplication matrix of~$t'_i$ and we assumed $t_1=1$.
Hence, at a $K$-rational point $\Gamma=(\gamma_{ij})$ of~$\BO$,
we have $\det(T')(\gamma_{ij})\ne 0$ if and only
if~$\mathcal{O}'$ is a $K$-basis of~$R_\Gamma$.
Since the set $\BO(\H)$ is the complement of
$\mathcal{Z}(I(Z_\OO(\Hbar)))$ in~$\BO(\Hbar)$,
the proof is complete.
\end{proof}

The sets $\BO^{\rm dfb}(\OO')$ form an open covering of~$\BO(\H)$,
as the next corollary shows.

\begin{corollary}\label{OpenCover}
In the setting of the algorithm, let $\OO'_1,\dots,
\OO'_\ell$ be the order ideals in~$\mathbb{T}^n$
with affine Hilbert function~$\H$.
\begin{enumerate}
\item[(a)] For $i=1,\dots,\ell$, the set
$\BO^{\rm dfb}(\OO'_i)$ is an open subscheme
of~$\BO(\Hbar)$ which is contained in~$\BO(\H)$.

\item[(b)] The subschemes $\BO^{\rm dfb}(\OO'_1),\dots,
\BO^{\rm dfb}(\OO'_\ell)$ form an open covering of~$\BO(\H)$.
\end{enumerate}
\end{corollary}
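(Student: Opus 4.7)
The plan is to handle the two claims separately; part~(a) is essentially a restatement of the output of Algorithm~\ref{alg:DFB}, while part~(b) requires constructing, for each $K$-rational point $\Gamma\in\BO(\H)$, an explicit order ideal $\OO'_j$ from the enumeration with $\Gamma\in\BO^{\rm dfb}(\OO'_j)$.

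For part~(a), I would invoke Algorithm~\ref{alg:DFB} directly. It produces the description
$$\BO^{\rm dfb}(\OO'_i) \;=\; \BO(\H) \setminus \mathcal{Z}(\det(T'_i)),$$
so $\BO^{\rm dfb}(\OO'_i)$ is the complement in $\BO(\H)$ of a principal closed set. Hence it is open in $\BO(\H)$. Combined with Definition~\ref{def:BOH}, which states that $\BO(\H)$ is an open subscheme of $\BO(\Hbar)$, this immediately gives that $\BO^{\rm dfb}(\OO'_i)$ is open in $\BO(\Hbar)$ and contained in $\BO(\H)$, as required.

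For part~(b), I would pick an arbitrary $K$-rational point $\Gamma$ of $\BO(\H)$ and exhibit a degree filtered $K\!$-basis of $R_{\X_\Gamma}$ among the $\OO'_1,\dots,\OO'_\ell$. By construction $\HFa_{\X_\Gamma}=\H$. Fix a degree compatible term ordering $\sigma$ on $\mathbb{T}^n$ (for instance, $\mathtt{DegRevLex}$) and set $\OO'_\Gamma := \mathbb{T}^n \setminus \LT_\sigma(I_{\X_\Gamma})$. By Macaulay's Basis Theorem this is an order ideal whose residue classes form a $K$-basis of $R_{\X_\Gamma}$, and by the example following Proposition~\ref{CharMaxdeg} (the one on degree compatible term orderings), the induced border basis is degree filtered, so $\OO'_\Gamma$ is a degree filtered $K$-basis of $R_{\X_\Gamma}$. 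Consequently $\HFa_{\OO'_\Gamma} = \HFa_{\X_\Gamma} = \H$, so $\OO'_\Gamma$ appears in the enumeration $\OO'_1,\dots,\OO'_\ell$, say as $\OO'_j$, and then $\Gamma\in\BO^{\rm dfb}(\OO'_j)$ by definition. Since every $K$-rational point of $\BO(\H)$ lies in at least one chart, the covering property follows.

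The main obstacle, modest as it is, is verifying that the order ideal produced by the degree compatible term ordering really is degree filtered and has affine Hilbert function equal to $\H$; both rely on the degree compatibility of $\sigma$ (which forces $\deg(\LT_\sigma(f))=\deg(f)$ for the relevant elements of $I_{\X_\Gamma}$, so that the filtered pieces of $R_{\X_\Gamma}$ are counted correctly by terms of bounded degree in $\OO'_\Gamma$). Once that is in hand, the membership $\OO'_\Gamma\in\{\OO'_1,\dots,\OO'_\ell\}$ is automatic from the hypothesis that this list enumerates all order ideals in $\mathbb{T}^n$ with affine Hilbert function $\H$.
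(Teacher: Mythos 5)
Your proposal is correct and follows essentially the same route as the paper: part~(a) is read off from the two descriptions of $\BO^{\rm dfb}(\OO'_i)$ produced by Algorithm~\ref{alg:DFB} (the paper phrases the containment in $\BO(\H)$ via the ideal inclusion $J_{\OO'_i}+I(\BO(\Hbar))\subseteq I(Z_\OO(\Hbar))$, you via the identity $\BO^{\rm dfb}(\OO'_i)=\BO(\H)\setminus\mathcal{Z}(\det(T'_i))$ together with the openness of $\BO(\H)$ in $\BO(\Hbar)$ — a cosmetic difference). Part~(b) is exactly the paper's argument: take $\OO_\sigma(I_\Gamma)$ for a degree compatible term ordering $\sigma$, note that it gives a degree filtered basis of $R_{\X_\Gamma}$ with affine Hilbert function $\H$, and conclude $\Gamma\in\BO^{\rm dfb}(\OO_\sigma(I_\Gamma))$.
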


\begin{proof}
To show~(a), we note that 
$$
J_{\OO_i} + I(\BO(\Hbar)) \;=\; 
(I(Z_\OO(\Hbar)) \cap \langle \det(T'_i) \rangle) + I(\BO(\Hbar))
\;\subseteq\; I(Z_\OO(\Hbar)) 
$$
for $i=1,\dots,\ell$, and that we have $\BO(\H)=
\BO(\Hbar) \setminus \mathcal{Z}\bigl(
I(Z_\OO(\Hbar)) \bigr)$.

Now we prove~(b). For every $K$-rational point~$\Gamma$ 
of~$\BO(\H)$, we consider the order ideal
$\OO_\sigma(I_\Gamma) = \mathbb{T}^n \setminus \LT_\sigma(I_\Gamma)$
for some degree compatible term ordering~$\sigma$.
Then the point~$\Gamma$ is contained in 
$\BO^{\rm dfb}(\OO_\sigma(I_\Gamma))$, and the claim follows.
\end{proof}

To complement this discussion, we apply Algorithm~\ref{alg:DFB} to
the setting of Example~\ref{1-x-y-x2-x3}.

\begin{example}\label{1-x-y-x2-x3-cont2}
Let~$\OO$ be the order ideal $\OO=\{1,x,y,x^2,x^3\}$
in~$\mathbb{T}^2$, and let $\H=(1,3,5,5,\dots)$.
In order to cover $\BO(\H)$ with DFB-subschemes, we have
to construct all order ideals with affine Hilbert function~$\H$.
They are $\OO'_1 = \{1,x,y,x^2,xy\}$, $\OO'_2 = \{1,x,y,x^2,y^2\}$,
and $\OO'_3 = \{1,x,y,xy,y^2\}$.
\begin{enumerate}
\item[(a)] When we apply Algorithm~\ref{alg:DFB} to~$\OO'_1$,
we get $J_{\OO'_1}=\langle c_{51}\rangle + I(\BO(\Hbar))$. This is
consistent with the observation that we can exchange the element~$x^3$
of~$\OO$ with the element $xy$ of~$\OO'_1$ in the basis of a ring 
$R_{\X_\Gamma}$ if an only if the entry~$c_{51}$ in~$\Gamma$
is non-zero.

\item[(b)] An application of Algorithm~\ref{alg:DFB}
to~$\OO'_2$ yields $J_{\OO'_2}=\langle c_{52} \rangle + I(\BO(\Hbar))$.

\item[(c)] By applying Algorithm~\ref{alg:DFB} to~$\OO'_3$, 
we get $J_{\OO'_3}=\langle c_{42}c_{51} - c_{41}c_{52} \rangle 
+ I(\BO(\Hbar))$.
\end{enumerate}
\end{example}

Our next task is the following. Suppose we are given a 
function ${\tt Locus}(\OO')$ which returns an 
ideal~$I_{\OO'}$ in~$K[C']$ such that $\mathbb{B}^{\rm df}_{\OO'}
\setminus \mathcal{Z}(I_{\OO'})$ contains exactly the $K$-rational 
points which represent schemes having a degree filtered
$\OO'$-border basis and a certain property~$\mathcal{P}$. 
After covering $\BO(\H)$ with finitely many open subsets
$\BO^{\rm dfb}(\OO')$, we want to
compute an ideal~$J$ in~$K[C]$ such that $\BO(\H) \setminus
\mathcal{Z}(J)$ contains exactly the $K$-rational points
of~$\BO(\H)$ which represent the 0-dimensional schemes
with Hilbert function~$\H$ and property~$\mathcal{P}$.

The first step is to construct some base change formulas
over certain open subsets of~$\BO$.

\begin{proposition}\label{prop:trafo}
Let $\OO=\{t_1,\dots,t_\mu\}$ and $\OO'=\{t'_1,\dots,t'_\mu\}$
be two order ideals such that there is a 0-dimensional
subscheme~$\X$ of~$\mathbb{A}^n_K$ which has both an $\OO$-border 
basis and an $\OO'$-border basis.
\begin{enumerate}
\item[(a)] For $i\in\{1,\dots,\mu\}$, let $T'_i$
be the first column of $t'_i(\A_1,\dots,\A_n)$,
where $\A_1,\dots,\A_n$ are the generic multiplication
matrices with respect to~$\OO$.
Let $T'\in \Mat_\mu(K[C])$ be the matrix 
with columns $T'_1,\dots,T'_\mu$, and let $\delta=\det(T')$.
Then the open subset $D(\delta)$ of~$\BO$ parametrizes
all $K$-rational points of~$\BO$ which represent
schemes having an $\OO'$-border basis.

\item[(b)] Let $G=\{g_1,\dots,g_\nu\}$ be the generic
$\OO$-border prebasis, and let $U_\OO = \BO[x_1,\dots,x_n]/
\langle G\rangle$ be the universal $\OO$-border basis
family. Then both~$\OO$ and~$\OO'$ are $K[C]_\delta$-bases
of $(U_\OO)_\delta$, and if we consider~$\OO$ and~$\OO'$
as row vectors, we have $\OO'=\OO\cdot T'$.

\item[(c)] In the setting of~(b), let $f\in P$.
Then the matrices of the multiplication by~$f$ with respect to the
$K[C]$-bases $\OO$ and~$\OO'$ on~$(U_\OO)_\delta$ satisfy the formula
$$
\A'_f \;=\; {\textstyle\frac{1}{\delta}}\,
\adj(T') \cdot \A_f\cdot T'
$$
\end{enumerate}
\end{proposition}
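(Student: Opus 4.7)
The plan is to exploit, once again, the key fact already invoked in Algorithm~\ref{alg:DFB}: the first column of the matrix $t'_i(\A_1, \dots, \A_n)$ records the coordinates of $t'_i$ in the $B_\OO$-basis $\OO$ of the universal family $U_\OO$. This holds because the generic multiplication matrices $\A_r$ pairwise commute modulo $I(\BO)$ and represent multiplication by $x_r$, so $t'_i(\A_1, \dots, \A_n)$ is the multiplication matrix of $t'_i$ on $U_\OO$ in the basis $\OO$; applying it to the coordinate vector of $t_1 = 1$, namely the first standard basis vector, extracts $t'_i \cdot 1 = t'_i$ itself.

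For part~(a), I would then specialize at a $K$-rational point $\Gamma$ of $\BO$ and observe that $T'(\Gamma)$ is the base-change matrix from $\OO$ to $\OO'$ inside $R_{\X_\Gamma}$. Hence $\OO'$ is a $K$-basis of $R_{\X_\Gamma}$ precisely when $\delta(\Gamma) \neq 0$. Since $\OO'$ is an order ideal, being a $K$-basis of $R_{\X_\Gamma}$ is equivalent to $I_{\X_\Gamma}$ admitting an $\OO'$-border basis, and the claimed description of $D(\delta)$ follows.

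For part~(b), the identity $\OO' = \OO \cdot T'$ holds already inside $U_\OO$ by the opening observation applied column by column, and hence also after localization. Localizing at $\delta$ inverts $\det(T')$ in $K[C]_\delta$, so $T'$ becomes an invertible linear transformation of the free module $(U_\OO)_\delta$, which promotes $\OO'$ to a second basis of $(U_\OO)_\delta$ alongside~$\OO$.

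For part~(c), once~$T'$ is invertible the formula is a direct application of the standard change-of-basis rule: writing the multiplication operator by~$f$ in the basis~$\OO$ as~$\A_f$ and using the change-of-basis matrix~$T'$ from $\OO'$ to~$\OO$, one obtains $\A'_f = (T')^{-1}\,\A_f\, T'$; substituting the adjugate expression $(T')^{-1} = \delta^{-1} \adj(T')$ yields the stated equality. The only point requiring actual care — and the principal obstacle — is the very first observation that the noncommutative substitution $t'_i(\A_1,\dots,\A_n)$ is unambiguous and genuinely computes the multiplication by~$t'_i$ on~$U_\OO$; this is precisely where the commutator relations defining $I(\BO)$ are essential.
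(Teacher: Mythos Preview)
Your proposal is correct and follows essentially the same approach as the paper: both arguments hinge on recognizing that $t'_i(\A_1,\dots,\A_n)$ is the multiplication matrix of~$t'_i$ in the basis~$\OO$, so its first column (via $t_1=1$) gives the coordinates of~$t'_i$, whence $T'$ is the change-of-basis matrix and parts~(a)--(c) follow from invertibility on $D(\delta)$ and the adjugate formula. Your write-up is in fact slightly more explicit than the paper's on two points---why the substitution $t'_i(\A_1,\dots,\A_n)$ is well defined modulo $I(\BO)$, and why $\OO'$ being a $K$-basis is equivalent to the existence of an $\OO'$-border basis---but these are elaborations of the same argument, not a different route.
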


\begin{proof}
To prove~(a) and~(b), we note that the matrix $t'_i(\A_1,\dots,\A_n)$
is the multiplication matrix of~$t'_i$ in the basis~$\OO$
of the universal family. By the ordering of the terms, we have
$t_1=t'_1=1$. Hence the first column of $t'_i(\A_1,\dots,\A_n)$
contains the tuple of coordinates for~$t'_i$ in the basis~$\OO$.
Consequently, the tuple $(t'_1,\dots,t'_\mu)$ is a
$K[C]$-basis of~$U_\OO$ at all points where $\det(T')\ne 0$,
and this is precisely the set $D(\delta)$. Moreover, the transformation matrix
from the basis~$\OO$ to the basis~$\OO'$ is given by~$T'$
over the set~$D(\delta)$.

Claim~(c) follows from the base change formula for linear maps
and the observation that $(T')^{-1} = \frac{1}{\delta}\, \adj(T')$.
\end{proof}

The next proposition provides a way to
relate the coordinate systems $C=\{c_{ij}\}$
of~$\BO$ and $C'=\{c'_{ij}\}$ of~$\mathbb{B}_{\OO'}$
when an open subset of the latter is mapped 
to $\BO^{\rm dfb}(\OO')$.

\begin{proposition}\label{cijTrafo}
Let $\OO=\{t_1,\dots,t_\mu\}$ and $\OO'=\{t'_1,\dots,t'_\mu\}$
be two order ideals such that there exists a 0-dimensional
subscheme~$\X$ of~$\mathbb{A}^n_K$ which is represented both by
a point in~$\BO$ and by a point in $\mathbb{B}_{\OO'}$.
Let $C'=\{c'_{ij}\}$ be the set of indeterminates
such that $\mathbb{B}_{\OO'} = K[C']/I(\mathbb{B}_{\OO'})$,
and let $\partial\OO'=\{b'_1,\dots,b'_\ell\}$. 

For $j=1,\dots,\ell$,
let $B'_j=(b'_{1j},\dots,b'_{\mu j})\tr$ be the first column
of $b'_j(\A_1,\dots,\A_n)$. Then the representation
of $c'_{ij}$ in the coordinate system~$C$ is given by the $i$-th
entry of $(T')^{-1}B'_j$ which is of the form $\frac{1}{\delta}
p_{ij}$ with $p_{ij}\in K[C]$ for $i=1,\dots,\mu$
and $j=1,\dots,\ell$.
\end{proposition}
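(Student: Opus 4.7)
The plan is to identify $c'_{ij}$ as the coordinate of $b'_j$ with respect to the basis~$\OO'$ in the universal family, and then to compute these coordinates by first expanding $b'_j$ in the basis~$\OO$ and afterwards applying the change-of-basis matrix from Proposition~\ref{prop:trafo}.

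First I would recall the definition of the generic $\OO'$-border prebasis: it has the elements $g'_j = b'_j - \sum_{i=1}^\mu c'_{ij}\, t'_i$ for $j=1,\dots,\ell$. Thus, in the universal $\OO'$-border basis family, the tuple $(c'_{1j},\dots,c'_{\mu j})^{\mathrm{tr}}$ is precisely the vector of coordinates of $b'_j$ with respect to the $B_{\OO'}$-basis~$\OO'$. Working on the open subset $D(\delta)$ of~$\BO$ given by Proposition~\ref{prop:trafo}(a), both~$\OO$ and~$\OO'$ are $K[C]_\delta$-bases of $(U_\OO)_\delta$, and any $\OO$-border basis at a point of $D(\delta)$ automatically produces an $\OO'$-border basis. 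Hence on $D(\delta)$ the indeterminates~$c'_{ij}$ are represented by well-defined elements of $K[C]_\delta$, and it remains to identify them.

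Second, I would compute the coordinates of~$b'_j$ in the basis~$\OO$. Since the multiplication by $b'_j$ on~$U_\OO$ with respect to~$\OO$ is given by $b'_j(\A_1,\dots,\A_n)$, and since $t_1=1$ by Assumption~\ref{orderOO}, the image of~$1$ under this map, namely~$b'_j$ itself, has coordinate vector equal to the first column of $b'_j(\A_1,\dots,\A_n)$, that is, to~$B'_j$. So in $(U_\OO)_\delta$ we have the identity $b'_j = \OO \cdot B'_j$, where~$\OO$ is written as a row vector.

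Third, I would apply the base-change formula from Proposition~\ref{prop:trafo}(b), which states that $\OO' = \OO \cdot T'$, or equivalently $\OO = \OO' \cdot (T')^{-1}$ over~$D(\delta)$. Substituting this identity yields
\[
b'_j \;=\; \OO' \cdot (T')^{-1} B'_j.
\]
By uniqueness of the representation of $b'_j$ in the basis~$\OO'$, the $i$-th entry of $(T')^{-1} B'_j$ must coincide with the coefficient $c'_{ij}$ of~$t'_i$. Finally, using $(T')^{-1} = \frac{1}{\delta}\adj(T')$ and setting $p_{ij}$ equal to the $i$-th entry of the column vector $\adj(T')\cdot B'_j \in K[C]^\mu$, we obtain $c'_{ij} = \frac{1}{\delta}\, p_{ij}$ with $p_{ij} \in K[C]$, as claimed.

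There is no real obstacle here: once one matches the definition of $c'_{ij}$ with the interpretation of the first column of $b'_j(\A_1,\dots,\A_n)$ as a coordinate vector, the rest is a direct substitution using the already established transformation $\OO' = \OO \cdot T'$. The only subtlety worth mentioning explicitly is that the identification takes place in the localization at~$\delta$, which is why the denominator~$\delta$ appears and why the representative $p_{ij}$ is only determined modulo the defining ideal of $(\BO)_\delta$.
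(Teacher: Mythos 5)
Your proposal is correct and follows essentially the same route as the paper's proof: read off $(c'_{1j},\dots,c'_{\mu j})\tr$ as the $\OO'$-coordinates of $b'_j$, identify $B'_j$ as its $\OO$-coordinates via the first column of the multiplication matrix, and convert via $\OO'=\OO\cdot T'$ and $(T')^{-1}=\frac{1}{\delta}\adj(T')$. The extra remarks on working in the localization at $\delta$ and on $p_{ij}$ being determined only up to the defining ideal are accurate refinements, not deviations.
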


\begin{proof}
For $j=1,\dots,\ell$, consider the coordinate
tuple of~$b'_j$ in the basis~$\OO'$ of $(U_{\OO})_\delta$.
One one hand, it is $(c'_{1j},\dots,c'_{\mu j})\tr$. 
On the other hand, as shown in the proof of the preceding proposition, 
the coordinate tuple of~$b'_j$ in the basis~$\OO$
is~$B'_j$. Consequently, we get $b'_j= \OO\cdot B'_j = \OO'\cdot 
(T')^{-1} B'_j$.
By comparing the coefficients of~$b'_j$
in the basis~$\OO'$, we see that $c'_{ij}$ corresponds to the
$i$-th entry of $(T')^{-1} B'_j$.
Using $(T')^{-1} = \frac{1}{\delta}\, \adj(T')$,
the claim follows.
\end{proof}

Next we tackle the main task described above: how to
transform the output of the function ${\tt Locus}(\OO')$
for various order ideals~$\OO'$
to a single ideal defining the desired locus in $\BO(\H)$.

\begin{algorithm}{\upshape\bf (Combining Loci in a Hilbert 
Stratum)}\label{alg:Combine}\\
Let $\OO=\{t_1,\dots,t_\mu\}$ be an order ideal in~$\mathbb{T}^n$, 
and let~$\H$ be an $(n,\mu)$- admissible 
Hilbert function which dominates $\HFa_\OO$. Suppose that there exists a 
function ${\tt Locus}(\OO')$ which returns, for every order ideal~$\OO'$
with $\HFa_{\OO'}=\H$, an ideal~$J_{\OO'}$ in~$K[C']$ such that 
$\mathbb{B}^{\rm df}_{\OO'} \setminus \mathcal{Z}(J_{\OO'})$ 
contains exactly the $K$-rational points which represent schemes 
having a degree filtered $\OO'$-border basis and a certain 
property~$\mathcal{P}$. 
Consider the following instructions.
\begin{enumerate}
\item[(1)] Using Algorithms~\ref{alg:BOHbar} and~\ref{alg:BOH},
compute the ideals $I(\BO(\Hbar))$ and $I(Z_\OO(\Hbar))$
in~$K[C]$.

\item[(2)] Find the set $\{\OO'_1,\dots,\OO'_\ell\}$ 
of all order ideals~$\OO'_i$ such that $\H = \HFa_{\OO'_i}$.

\item[(3)] For $i=1,\dots,\ell$,
perform the following Steps (4) -- (8).

\item[(4)] Using ${\tt Locus}(\OO'_i)$, calculate the 
ideal $J_{\OO'_i}$ in~$K[C'_i]$, where $C'_i$ is the set of indetermintes
such that $B_{\OO'_i} = K[C'_i] / I(\mathbb{B}_{\OO'_i})$.
Let $J_{\OO'_i}=\langle a_{i1},\dots,a_{i r_i}\rangle$.

\item[(5)] Use Proposition~\ref{prop:trafo}.a
to find the polynomial $\delta_i \in K[C]$ such that $D(\delta_i)$
is the open subset of~$\BO(\Hbar)$ whose $K$-rational points
represent schemes having a degree filtered $\OO'$-border basis. 

\item[(6)] Using Proposition~\ref{cijTrafo},
compute polynomials $p_{\kappa\lambda}\in K[C]$ for $\kappa=1,\dots,\mu$ and
$\lambda=1,\dots,\#\partial \OO'_i$ such that $c'_{\kappa\lambda}$ corresponds
to $\frac{1}{\delta_i}\, p_{\kappa\lambda}$.

\item[(7)] Let $z$ be a new indeterminate. For $j=1,\dots,r_i$, 
compute the homogenization $a^\ast_{ij}\in K[C'_i][z]$ 
of~$a_{ij}$ with respect to~$z$. Then let $\hat{a}_{ij}\in K[C]$ be the
result of substituting $c'_{\kappa\lambda}\mapsto p_{\kappa\lambda}$ 
and $z\mapsto \delta_i$ in~$a^\ast_{ij}$.
Let $J_i=\langle \hat{a}_{i1},\dots,\hat{a}_{ir_i}\rangle$.

\item[(8)] Compute $I^{\rm dfb}_{\OO \OO'_i} = I(Z_\OO(\Hbar)) \cap 
\langle \delta_i\rangle$.

\item[(9)] Return the list of triples $(\OO'_i, I^{\rm dfb}_{\OO \OO'_i},
J_i)$ where $i=1,\dots,\ell$.
\end{enumerate}
This is an algorithm which computes a list of triples 
$(\OO'_i, I^{\rm dfb}_{\OO \OO'_i}, J_i)$ of ideals such that,
for the ideal
$$
J \;=\; {\textstyle\sum\limits_{i=1}^\ell}\; \bigl(
(I^{\rm dfb}_{\OO \OO'_i} + I(\BO(\Hbar))) \;\cap\;
(J_i + I(\BO(\Hbar)))  \bigr)
$$
the set $\BO(\Hbar) \setminus \mathcal{Z}(J)$ contains precisely the 
$K$-rational points of~$\BO(\H)$ which represent schemes having
property~$\mathcal{P}$.
\end{algorithm}

\begin{proof}
Finiteness of the algorithm is clear.
Let us start the correctness proof by looking at the
formula for the resulting ideal~$J$. Clearly, if a $K$-rational
point~$\Gamma$ of~$\BO(\Hbar)$ represents a scheme having 
property~$\mathcal{P}$, it is in the set of all $K$-rational
points of~$\BOdf(\OO')$ with this property for every
order ideal~$\OO'$ for which~$\OO'$ is a degree filtered
$K$-basis of~$R_\Gamma$. Hence~$J$ is the sum of the ideals
defining the corresponding complements in~$\BOdf(\OO')$. In view
of Algorithm~\ref{alg:DFB}, it remains to show that~$J_i+I(\BO(\Hbar))$
defines the correct subset of~$\BO(\Hbar)$.

By the hypothesis, the ideal~$J_{\OO'_i}$ defines the correct locus
in~$\mathbb{B}^{\rm df}_{\OO'_i}$. In Step~(6) we apply the transformation
rule for the elements $c'_{ij}$ given in Proposition~\ref{prop:trafo}.d.
However, when we transform the polynomials $a_{ij}$, we are always
working in the localization $K[C]_{\delta_i}$. Hence we may multiply the
resulting rational functions by a power of~$\delta_i$ without 
changing the zero locus. The minimal power of~$\delta_i$ with which
$a_{ij}\vert_{c'_{\kappa\lambda}\mapsto p_{\kappa\lambda}/\delta_i}$
has to be multiplied to make it a polynomial is computed in Step~(7)
using the homogenization of~$a_{ij}$. Altogether, the ideal
$J_i = \langle \hat{a}_{i1},\dots,\hat{a}_{ir}\rangle$ defines the
same zero locus in $D(\delta_i) \subseteq \BO(\Hbar)$ as the
image of~$J_i$ under $c'_{\kappa\lambda} \mapsto \frac{1}{\delta_i}
p_{\kappa\lambda}$, and the correctness proof of the algorithm is complete.
\end{proof}

Recall that the vanishing ideals we compute need not be radical 
ideals. Using this freedom, the recombination of the different 
ideals~$J_i$ in the preceding algorithm may
profit from the following observation.

\begin{lemma}\label{SumOfInters}
Given a ring~$S$ and ideals $I_1,\dots,I_r,J$ in~$S$, we have
$$
\Rad ( {\textstyle\prod\limits_{i=1}^r} I_i + J ) \;=\; 
\Rad ( {\textstyle\bigcap\limits_{i=1}^r} I_i + J ) \;=\; 
\Rad ( {\textstyle\bigcap\limits_{i=1}^r} (I_i+J) )
$$
\end{lemma}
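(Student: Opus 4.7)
The plan is to establish the chain of inclusions
\[
\prod_{i=1}^r I_i + J \;\subseteq\; \bigcap_{i=1}^r I_i + J \;\subseteq\; \bigcap_{i=1}^r (I_i + J),
\]
and then to show that the last of these is contained in $\Rad\bigl(\prod_{i=1}^r I_i + J\bigr)$. Since $\Rad(\cdot)$ preserves inclusions and is idempotent, this sandwich forces all three radicals to coincide, which is exactly the claim.

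The first inclusion holds because $\prod_i I_i \subseteq \bigcap_i I_i$: every generator $a_1 a_2 \cdots a_r$ with $a_i\in I_i$ lies in each individual $I_j$. The second inclusion is equally immediate: since $\bigcap_i I_i \subseteq I_j$ for every $j$, we have $\bigcap_i I_i + J \subseteq I_j + J$ for every $j$, and intersecting over $j$ yields the claim.

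The only step that requires an actual computation is the inclusion $\bigcap_i (I_i+J) \subseteq \Rad\bigl(\prod_i I_i + J\bigr)$. Given $a\in \bigcap_i (I_i + J)$, for each $i\in\{1,\dots,r\}$ I would pick a decomposition $a = b_i + c_i$ with $b_i\in I_i$ and $c_i\in J$. Expanding
\[
a^r \;=\; \prod_{i=1}^r (b_i+c_i) \;=\; b_1 b_2 \cdots b_r \;+\; \sum_{\emptyset\ne S\subseteq\{1,\dots,r\}} \Bigl(\prod_{i\notin S} b_i\Bigr)\Bigl(\prod_{i\in S} c_i\Bigr),
\]
one sees that the leading term $b_1 b_2\cdots b_r$ lies in $\prod_i I_i$, while every remaining summand contains at least one factor $c_i\in J$ and therefore lies in $J$. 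Hence $a^r \in \prod_i I_i + J$, i.e.\ $a\in \Rad\bigl(\prod_i I_i + J\bigr)$, which closes the chain.

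There is no real obstacle here; the only thing worth noting is that the exponent $r$ (the number of ideals in the product) is exactly large enough to absorb any element of $\bigcap_i (I_i+J)$ into $\prod_i I_i + J$ after a single raising to the $r$-th power, which is what makes the argument work uniformly without having to iterate.
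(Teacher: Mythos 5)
Your proposal is correct and is essentially the paper's own argument: the easy inclusions are noted, and the key step writes $a=b_i+c_i$ with $b_i\in I_i$, $c_i\in J$ for each $i$ and expands $a^r$ so that $b_1\cdots b_r\in\prod_i I_i$ and all remaining terms fall into $J$. No substantive difference from the proof in the paper.
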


\begin{proof}
The inclusions ``$\subseteq$" are clear.
Therefore it suffices to show $\bigcap_{i=1}^r (I_i+J) \subseteq 
\Rad( \prod_{i=1}^r I_i +J)$.
Let $f = a_1+b_1 = \cdots = a_r+b_r$ with $a_i\in I_i$ 
and $b_i\in J$ for $i=1,\dots,r$. 
Then we have $f^{\,r} = \prod_{i=1}^r a_i +c$ with $c\in J$, 
and the claim follows.
\end{proof}

With the aid of this lemma, the formula for the ideal~$J$
in the above algorithm may be simplified as follows.

\begin{remark}\label{ImprovedCombine}
In the setting of Algorithm~\ref{alg:Combine},
the potentially expensive computation of intersection
ideals in the computation of~$J$ may be
replaced by calculating
$$
\tilde{J} \;=\; {\textstyle\sum\limits_{i=1}^\ell}
\; (I^{\rm dfb}_{\OO\OO'_i} \cap J_i) \;+\; I(\BO(\H))
\hbox{\quad \rm or\quad}
\hat{J} \;=\; {\textstyle\sum\limits_{i=1}^\ell}
\; (I^{\rm dfb}_{\OO\OO'_i} \cdot J_i) \;+\; I(\BO(\H))
$$
Then the $K$-rational points of $\BO(\H) \setminus 
\mathcal{Z}(J)$ are precisely the points~$\Gamma$ in~$\BO(\H)$ 
which represent those affine schemes~$\X_\Gamma$ that have 
property $\mathcal{P}$.
\end{remark}

\bigbreak
%
%

\section{The Cayley-Bacharach Locus in $\BO(\H)$}
\label{The Cayley-Bacharach Locus in BOH}

In this section we use Algorithm~\ref{alg:CBLocDF}
to find the equations describing the Cayley-Bacharach locus
in the border basis scheme. Since the definition of the Cayley-Bacharach
property involves the regularity index of~$R$, we have to fix the
Hilbert function~$\H$ and work in the subscheme~$\BO(\H)$
of the border basis scheme. Furthermore, the characterization 
of the Cayley-Bacharach property in~\cite{KLR1},
Thm.~4.5, which underlies Algorithm~\ref{alg:CBLocDF},
requires us to fix a degree filtered basis of the coordinate ring. Hence
we work in the various $\mathcal{O}'$-DFB subschemes of~$\BO(\H)$,
where $\OO'$ is a degree filtered order ideal for~$\mathcal{O}$.
The recombination of the individual loci is then achieved using
Algorithm~\ref{alg:Combine}.
Thus the result is of the following algorithm is a list of ideals 
which describes the complement of the Cayley-Bacharach locus in~$\BO(\H)$.

\begin{algorithm}{\upshape\bf (Computing the Cayley-Bacharach
Locus)}\label{alg:CBlocus}\\
Let $\OO=\{t_1,\dots,t_\mu\}$ be an order ideal in~$\mathbb{T}^n$,
let $\H=(H_0,H_1,\dots)$ be an $(n,\mu)$-ad\-mis\-si\-ble Hilbert function
which dominates~$\HFa_\OO$, and let $\rho=\min\{i\ge 0 \mid H_i=\mu\}$.
Consider the following sequence of instructions.
\begin{enumerate}
\item[(1)] Compute the set $\{\OO'_1,\dots,\OO'_\ell\}$
of all order ideals in~$\mathbb{T}^n$ with affine Hilbert 
function~$\H$.

\item[(2)] For $i\in\{1,\dots,\ell\}$, let
${\tt CBLocus}(\OO'_i)$ be the function
obtained by applying Algorithm~\ref{alg:CBLocDF}
to~$\OO'_i$. It yields an ideal $J_{\OO'_i}$
in $K[C'_i]$, where~$C'_i$ is the set of indetermintes
such that $B_{\OO'_i} = K[C'_i] / I(\mathbb{B}_{\OO'_i})$.

\item[(3)] Apply Algorithm~\ref{alg:Combine}
using the function ${\tt CBLocus}(\OO'_i)$ in Step~(4).
Return the resulting list of triples 
$(\OO'_i, I^{\rm dfb}_{\OO \OO'_i},J_i)$ where $i=1,\dots,\ell$.
\end{enumerate}
This is an algorithm which computes a list of triples 
$(\OO'_i, I^{\rm dfb}_{\OO \OO'_i}, J_i)$ such that,
for the ideal
$$
I^{\rm NCB}_\OO \;=\; {\textstyle\sum\limits_{i=1}^\ell}\;
\bigl( (I^{\rm dfb}_{\OO \OO'_i} + I(\BO(\Hbar))) \;\cap\;
(J_i + I(\BO(\Hbar))) \bigr)
$$
the set $\BO(\Hbar) \setminus \mathcal{Z}(I^{\rm NCB}_\OO)$ 
contains precisely the $K$-rational points of~$\BO(\H)$ 
which represent schemes having the Cayley-Bacharach property. 
\end{algorithm}

\begin{proof}
This follows by combining Algorithm~\ref{alg:CBLocDF}
and Algorithm~\ref{alg:Combine}.
\end{proof}

Note that we may apply the improvement offered by
Remark~\ref{ImprovedCombine} to this algorithm.
It is also possible to construct a version which avoids
the introduction of new sets of indeterminates~$C'_i$.
Since this version did not yield worthwhile speed-ups
for the actual implementation, we did not include it here.
Let us illustrate Algorithm~\ref{alg:CBlocus} by applying 
it in the setting of Example~\ref{1-x-y-x2-x3-cont2}.

\begin{example}\label{1-x-y-x2-x3-cont3}
In~$\mathbb{T}^2$, we consider the order ideal $\OO=\{1,x,y,x^2,x^3\}$,
and we use the Hilbert function $\H=(1,3,5,5,\dots)$.
As we saw in Example~\ref{1-x-y-x2-x3-cont2}, the scheme $\BO(\H)$
is covered by three open subschemes $\BO^{\rm dfb}(\OO'_i)$
where $i\in\{1,2,3\}$. Let us compute the Cayley-Bacharach
locus for each scheme $\BO^{\rm dfb}(\OO'_i)$.
\begin{enumerate}
\item[(a)] For the order ideal $\OO'_1 = \{1,x,y,x^2,xy\}$,
the algorithm produces the triple $(\OO'_1,I^{\rm dfb}_{\OO\OO'_1},J_1)$
with $I^{\rm dfb}_{\OO\OO'_1}=\langle c_{51}\rangle$ and
$J_1=\langle 1\rangle$.

\item[(b)] Similarly, for the order ideal $\OO'_2 = \{1,x,y,x^2,y^2\}$,
we obtain $I^{\rm dfb}_{\OO\OO'_2}=\langle c_{52}\rangle$ and
$J_2=\langle 1\rangle$.

\item[(c)] Thirdly, for $\OO'_3 = \{1,x,y,xy,y^2\}$, we get
$I^{\rm dfb}_{\OO\OO'_3}=\langle c_{42}c_{51}-c_{41}c_{52}\rangle$ and
$J_3=\langle 1\rangle$.
\end{enumerate}
Using Remark~\ref{ImprovedCombine}, the combined result is
$$
\tilde{I}^{\rm NCB}_\OO \;=\;
{\textstyle\sum\limits_{i=1}^3} (I^{\rm dfb}_{\OO\OO'_i} \cap J_{\OO'_i}) 
+ I(\BO(\Hbar)) \;=\; \langle c_{51},c_{52}\rangle + I(\BO(\Hbar)) \;=\; 
I(Z_\OO(\Hbar))
$$
i.e., an ideal defining the complement of~$\BO(\H)$ in~$\BO(\Hbar)$. 
This is in agreement with the observation that, for $\H=(1,3,5,5,\dots)$,
every $K$-rational point of~$\BO(\H)$ represents a Cayley-Bacharach scheme.
\end{example}

Next we apply Algorithm~\ref{alg:CBlocus} to a case where
not every scheme has the Cayley-Bacharach property.

\begin{example}
Let us consider the order ideal $\OO=\{1,x,x^2,x^3\}$ in~$\mathbb{T}^2$
and $\H =(1,3,4,4,\dots)$. In this case we have three order ideals
with affine Hilbert function~$\H$, namely $\OO'_1 =\{1,x,y,x^2\}$, 
$\OO'_2=\{1,x,y,xy\}$, and $\OO'_3 =\{1,x,y,y^2\}$.
Here Algorithm~\ref{alg:CBlocus} yields the following three triples
$(\OO'_i, I^{\rm dfb}_{\OO \OO'_i}, J_i)$.
\begin{enumerate}
\item[(a)] $I^{\rm dfb}_{\OO\OO'_1} = \langle c_{41} \rangle$
and $J_1=\langle h\rangle$, where
\begin{align*}
h=\; &c_{34}^2 c_{41}^4  -2 c_{31} c_{34} c_{41}^3 c_{44} 
+c_{31}^2 c_{41}^2 c_{44}^2  -2 c_{31}^2 c_{34} c_{41}^2  
+2 c_{21} c_{34} c_{41}^3  +2 c_{31}^3 c_{41} c_{44} 
\\ &
-2 c_{21} c_{31} c_{41}^2 c_{44} +c_{31}^4  -2 c_{21} c_{31}^2 c_{41} 
+c_{21}^2 c_{41}^2  -c_{21} c_{32} c_{41}^2  
-c_{31} c_{33} c_{41}^2  -c_{35} c_{41}^3  
\\ & 
+c_{21} c_{31} c_{41} c_{42} +c_{31}^2 c_{41} c_{43} 
+c_{31} c_{41}^2 c_{45},
\end{align*}

\item[(b)] $I^{\rm dfb}_{\OO\OO'_2} = \langle 
c_{34} c_{41}^2  -c_{31} c_{41} c_{44} -c_{31}^2  +c_{21} c_{41} \rangle$
and $J_2=\langle h\rangle$,

\item[(c)] $I^{\rm dfb}_{\OO\OO'_3} = \langle 
c_{21} c_{32} c_{41} +c_{31} c_{33} c_{41} +c_{35} c_{41}^2  
-c_{21} c_{31} c_{42} -c_{31}^2 c_{43} -c_{31} c_{41} c_{45} \rangle$
and $J_3=\langle h\rangle$.
\end{enumerate}
Using Remark~\ref{ImprovedCombine} and computing up to radical, we get
$$
\tilde{I}^{\rm NCB}_\OO \;=\;
{\textstyle\sum\limits_{i=1}^3} (I^{\rm dfb}_{\OO\OO'_i} \cap \langle h\rangle) 
+ I(\BO(\Hbar)) \equiv \langle c_{31},c_{41}\rangle \cap \langle h\rangle + I(\BO(\Hbar))
$$	
Since $I(Z_\OO(\Hbar)) = \langle c_{31},c_{41}\rangle + I(\BO(\Hbar))$, 
it follows that the Cayley-Bacharach locus in $\BO(\H)$ is defined by a single
polynomial. Using the fact that we can replace~$h$ by its normal
form with respect to~$I(\BO(\Hbar))$, we see that we can also 
use the polynomial
$$
\tilde{h}=-c_{35} c_{41}^3  +c_{21} c_{41} c_{42}^2  
+c_{42}^4  -c_{21} c_{41}^2 c_{43} +c_{31} c_{41} c_{42} c_{43} 
-2 c_{41} c_{42}^2 c_{43} +c_{41}^2 c_{43}^2
$$
to define the Cayley-Bacharach locus in~$\BO(\H)$.
\end{example}

If we combine the locally Gorenstein property and the Cayley-Bacharach
property, we can compute the corresponding locus by intersecting
the loci computed in Algorithm~\ref{alg:GorLoc} and Algorithm~\ref{alg:CBlocus}. 
Another approach, based on~\cite{KLR1}, Algorithm~5.9,
produces a more direct method. Since the resulting algorithm is similar to
Algorithm~\ref{alg:CBlocus}, we did not include it here.

Another application of Algorithm~\ref{alg:CBlocus} is
the following method for calculating the strict Gorenstein
locus in~$\BO(\H)$. It is based on the fact that a 0-dimensional 
affine scheme~$\X$ is strictly Gorenstein if and only if its 
affine Hilbert function is symmetric and it is Cayley-Bacharach 
(see~\cite{KLR1}, Thm.~6.8).

\begin{corollary}{\upshape\bf (Computing the Strict Gorenstein
Locus)}\label{alg:SGorInBOH}\\
In the above setting, the following instructions define
an algorithm which computes an ideal~$I^{\rm SG}_{\OO}$ in~$K[C]$ such
that the $K\!$-rational points~$\Gamma$ of $\BO(\H) \setminus
\mathcal{Z}(I^{\rm SG}_\OO)$ are precisely the ones which
represent 0-dimensional affine schemes~$\X_\Gamma$
that are strictly Gorenstein.
\begin{enumerate}
\item[(1)] Check if the affine Hilbert function~$\H$ is symmetric.
If this is not the case, return the zero ideal and stop.

\item[(2)] Using Algorithm~\ref{alg:CBlocus}, compute an ideal
$I^{\rm NCB}_\OO$ in~$K[C]$ which defines the
non-Cayley-Bacharach locus in~$\BO(\H)$ and return it.
\end{enumerate}
\end{corollary}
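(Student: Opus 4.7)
The plan is to verify correctness by splitting into the two cases determined by Step~(1) and invoking the structural characterization of strict Gorenstein schemes from \cite{KLR1}, Thm.~6.8, together with the already-proven correctness of Algorithm~\ref{alg:CBlocus}. Termination is immediate: deciding whether the finitely supported sequence~$\H$ is symmetric is a finite combinatorial test, and Algorithm~\ref{alg:CBlocus} has been shown to terminate.

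In the first case, when $\H$ is not symmetric, the cited theorem forces every strict Gorenstein scheme to have a symmetric affine Hilbert function. Since every $K$-rational point $\Gamma$ of $\BO(\H)$ represents a scheme $\X_\Gamma$ whose affine Hilbert function is exactly~$\H$, no such point can represent a strict Gorenstein scheme; the strict Gorenstein locus in $\BO(\H)$ is therefore empty. Returning the zero ideal $\langle 0\rangle$ gives $\mathcal{Z}(\langle 0\rangle)\supseteq \BO(\H)$, so $\BO(\H)\setminus\mathcal{Z}(\langle 0\rangle)=\emptyset$, matching the empty locus, as required.

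In the second case, when $\H$ is symmetric, the same theorem supplies the equivalence to be exploited: a 0-dimensional affine scheme whose affine Hilbert function is symmetric is strictly Gorenstein if and only if it has the Cayley-Bacharach property. Under this hypothesis, the set of strict Gorenstein points of $\BO(\H)$ coincides set-theoretically with the Cayley-Bacharach locus in $\BO(\H)$, which by Algorithm~\ref{alg:CBlocus} is precisely the complement of $\mathcal{Z}(I^{\rm NCB}_\OO)$ in $\BO(\H)$. Hence setting $I^{\rm SG}_\OO := I^{\rm NCB}_\OO$ meets the requirement of the corollary. The only substantive point, rather than a technical obstacle, is the willingness to use the cited equivalence: it lets one bypass the more expensive detour through the homogeneous border basis scheme $\BOhom$ that would otherwise be required to compute the strict Cayley-Bacharach locus via Algorithm~\ref{alg:SCBLocDF}, at the modest cost of an a~priori symmetry check on~$\H$.
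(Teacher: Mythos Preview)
Your proof is correct and follows essentially the same approach as the paper, which justifies the corollary by the single observation preceding it: a 0-dimensional affine scheme is strictly Gorenstein if and only if its affine Hilbert function is symmetric and it has the Cayley-Bacharach property (\cite{KLR1}, Thm.~6.8). Your two-case analysis simply spells out this equivalence in detail.
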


\bigbreak
%
%

\section{The Strict Cayley-Bacharach Locus in $\BO(\H)$}
\label{The Strict Cayley-Bacharach Locus in BOH}

Our next goal is to calculate the strict Cayley-Bacharach locus
in~$\BO$. Again we have to fix the Hilbert function first, that is, 
we have to work in a subscheme $\BO(\H)$ of~$\BO$.
Then we use a version of Algorithm~\ref{alg:SCBLocDF}
which is based on a degree filtered border basis.
Finally, we recombine the various loci for all degree filtered 
order ideals. In detail, we have the following algorithm.

\begin{algorithm}{\upshape\bf (Computing the Strict Cayley-Bacharach
Locus)}\label{alg:SCBlocus}\\
Let $\OO=\{t_1,\dots,t_\mu\}$ be an order ideal in~$\mathbb{T}^n$,
let~$\H=(H_0,H_1,\dots)$ be an $(n,\mu)$-admissible Hilbert function which 
dominates $\HFa_\OO$, let $\rho=\min\{ i\ge 0 \mid H_i=\mu \}$, 
and let $\Delta = H_\rho - H_{\rho-1}$.
Consider the following sequence of instructions.
\begin{enumerate}
\item[(1)] Compute the set $\{\OO'_1,\dots,\OO'_\ell\}$
of all order ideals in~$\mathbb{T}^n$ with affine Hilbert 
function~$\H$.

\item[(2)] For $i\in\{1,\dots,\ell\}$, let
${\tt SCBLocus}(\OO'_i)$ be function
obtained by applying Algorithm~\ref{alg:SCBLocDF}
to~$\OO'_i$. It yields an ideal $J_{\OO'_i}$ in $K[C'_i]$,
where~$C'_i$ is the set of indetermintes
such that $B_{\OO'_i} = K[C'_i] / I(\mathbb{B}_{\OO'_i})$.

\item[(3)] Apply Algorithm~\ref{alg:Combine}
using the function ${\tt SCBLocus}(\OO'_i)$ in Step~(4).
Return the resulting list of triples 
$(\OO'_i, I^{\rm dfb}_{\OO \OO'_i},J_i)$, where $i=1,\dots,\ell$.
\end{enumerate}
This is an algorithm which computes a list of triples 
$(\OO'_i, I^{\rm dfb}_{\OO \OO'_i}, J_i)$ such that,
for the ideal
$$
I^{\rm NSCB}_\OO \;=\; {\textstyle\sum\limits_{i=1}^\ell}\;
\bigl( (I^{\rm dfb}_{\OO \OO'_i} + I(\BO(\Hbar))) \;\cap\;
(J_i + I(\BO(\Hbar))) \bigr)
$$
the set $\BO(\Hbar) \setminus \mathcal{Z}(I^{\rm NSCB}_\OO)$ 
contains precisely the $K$-rational points of~$\BO(\H)$ 
which represent schemes having the strict Cayley-Bacharach 
property. 
\end{algorithm}

\begin{proof}
This follows from Algorithm~\ref{alg:SCBLocDF} and
Algorithm~\ref{alg:Combine}.
\end{proof}

To illustrate this algorithm, we apply it in a concrete case.

\begin{example}\label{1-x-y-z-x2-x3}
In~$\mathbb{T}^3$, we consider the order ideal $\OO=\{1,x,y,z,x^2,x^3\}$,
and we let $\H=(1,4,6,6,\dots)$. Then we have $\rho=\Delta=2$,
the ideal $I(\BO(\Hbar))$ is generated by 81 polynomials
\begin{align*}
I(\BO(\Hbar) \;=\; \langle &c_{3\, 10} c_{6 1} -c_{2 8} c_{6 2} +c_{4\, 10} c_{6 2} 
-c_{3 8} c_{6 4} -c_{4 8} c_{6 5} -c_{5 8} c_{6 7} +c_{5\, 10}, \dots,\\
-&c_{1 1} c_{2 1} -c_{1 3} c_{3 1} +c_{1 1} c_{3 3} -c_{1 4} c_{4 1} 
+c_{1 2} c_{4 3} -c_{1 6} c_{5 1} -c_{1 9} c_{6 1} +c_{1 8} c_{6 3}\rangle
\end{align*}
and the ideal $I(Z_\OO(\Hbar))$ is generated by 61 polynomials
$$
I(Z_\OO(\Hbar)) \;=\; \langle c_{6 5}, c_{6 4}, \dots, -c_{1 1} c_{2 1} 
-c_{1 3} c_{3 1} +c_{1 1} c_{3 3} -c_{1 4} c_{4 1} +c_{1 2} c_{4 3} 
-c_{1 6} c_{6 6} \rangle
$$

Notice that there are 15 order ideals
with affine Hilbert function~$\H$, namely $\OO'_1 = \{1,x,y,z,x^2,xy\}$,
$\OO'_2 = \{1,x,y,z,x^2,y^2\}$, $\dots$, $\OO'_{15} = 
\{1,x,y,z,yz,z^2\}$.
Hence Algorithm~\ref{alg:SCBlocus} yields 15 triples
$(\OO'_i, I^{\rm dfb}_{\OO \OO'_i}, J_i)$, where
\begin{enumerate}
\item[(1)] $I^{\rm dfb}_{\OO \OO'_1} = I(Z_\OO(\Hbar)) \cap 
\langle c_{61}\rangle$ and
\begin{align*}
J_1 = \langle
&c_{5 2} c_{6 2} c_{6 3} -c_{5 2} c_{6 1} c_{6 4} -c_{5 1} c_{6 2} c_{6 4} 
+c_{5 1} c_{6 1} c_{6 5} +c_{6 4}^2  -c_{6 3} c_{6 5}, \\
& c_{5 2} c_{6 1} c_{6 2} -c_{5 1} c_{6 2}^2  +c_{6 2} c_{6 4} 
-c_{6 1} c_{6 5},\\
&  -c_{5 5} c_{6 1}^2  +c_{5 4} c_{6 1} c_{6 2} -c_{5 1} c_{6 2} c_{6 4} 
+c_{5 1} c_{6 1} c_{6 5} +c_{6 4}^2  -c_{6 3} c_{6 5},\\
&  c_{5 4} c_{6 1}^2  -c_{5 3} c_{6 1} c_{6 2} +c_{5 1} c_{6 2} c_{6 3} 
-c_{5 1} c_{6 1} c_{6 4},\\
& c_{5 2} c_{6 1}^2  -c_{5 1} c_{6 1} c_{6 2} +c_{6 2} c_{6 3} 
-c_{6 1} c_{6 4}, \\
& -c_{5 1} c_{5 2} c_{6 1} c_{6 3} +c_{5 1}^2 c_{6 2} c_{6 3} 
+c_{5 4} c_{6 1} c_{6 3} -c_{5 3} c_{6 2} c_{6 3},\\
& c_{5 1} c_{5 4} c_{6 1} c_{6 2}\!-\!c_{5 1} c_{5 3} c_{6 2}^2  
\!+\!c_{5 1} c_{5 2} c_{6 1} c_{6 4} -c_{5 1}^2 c_{6 1} c_{6 5} \!
-\!c_{5 4} c_{6 1} c_{6 4} \!+\!c_{5 3} c_{6 2} c_{6 4} \rangle
\end{align*}

\item[$\vdots$\;\;]

\item[(15)] $I^{\rm dfb}_{\OO \OO'_{15}} = I(Z_\OO(\Hbar)) \cap 
\langle c_{5 5} c_{6 4} -c_{5 4} c_{6 5}\rangle$ and
\begin{align*}
J_{15} \;=\; \langle & c_{5 5}^2 c_{6 2} c_{6 3} -c_{5 5}^2 c_{6 1} c_{6 4} 
-c_{5 4} c_{5 5} c_{6 2} c_{6 4} + \cdots  -c_{5 1} c_{5 4} c_{6 5}^2,
\quad \dots,\\
& c_{5 4}^3 c_{6 1} c_{6 2} c_{6 4} -c_{5 3} c_{5 4}^2 c_{6 2}^2 c_{6 4} 
-c_{5 2} c_{5 4}^2 c_{6 1} c_{6 4}^2 + \cdots +c_{5 3} c_{5 4} c_{6 4}^2 c_{6 5}
\rangle
\end{align*}
where $J_{15}$ has 7 generators of degree 4 and 3 generators of degree 6.
\end{enumerate}
Let us compare this result to the ideals defining the Cayley-Bacharach locus
in~$\BO(\H)$ which we compute using Algorithm~\ref{alg:CBlocus}.
\begin{enumerate}
\item[(1)] For $\OO'_1 = \{1,x,y,z,x^2,xy\}$ the ideal $\tilde{J}_1$
is generated by 58 polynomials 
\begin{align*}
\tilde{J}_1 =\langle\; &
c_{5 2} c_{6 2} c_{6 3} -c_{5 2} c_{6 1} c_{6 4} -c_{5 1} c_{6 2} c_{6 4} 
+c_{5 1} c_{6 1} c_{6 5} +c_{6 4}^2  -c_{6 3} c_{6 5},\quad \dots, \\ &
c_{3 1} c_{3 3} c_{4 1}^2 c_{5 3}^2 c_{6 1} c_{6 2}^4 c_{6 3} 
+c_{3 3} c_{4 1}^3 c_{5 3}^2 c_{6 2}^5 c_{6 3}+\cdots 
-\tfrac{99}{4} c_{2 3} c_{5 3}^2 c_{6 3}^2 c_{6 5}
\; \rangle
\end{align*}

\item[$\vdots$\;\;]

\item[(15)] For $\OO'_{15} = \{1,x,y,z,yz,z^2\}$ the ideal
$\tilde{J}_{15}$ is generated by 54 polynomials  
\begin{align*}
\tilde{J}_{15} = \langle &
c_{5 5}^2 c_{6 2} c_{6 3} -c_{5 5}^2 c_{6 1} c_{6 4} 
-c_{5 4} c_{5 5} c_{6 2} c_{6 4} +\cdots -c_{5 1} c_{5 4} c_{6 5}^2,
\quad\dots,\\
&c_{2 4}^2 c_{3 4}^2 c_{5 2} c_{5 3}^2 c_{5 4} c_{5 5}^2 c_{6 2}^4 c_{6 4}^2  
-c_{2 4}^2 c_{3 4}^2 c_{5 1} c_{5 3} c_{5 4}^2 c_{5 5}^2 c_{6 2}^4 c_{6 4}^2  
+\cdots\\
&+\tfrac{515}{2} c_{2 4}^3 c_{5 3}^2 c_{5 4} c_{6 4} c_{6 5}^5 c_{6 10}
\rangle.
\end{align*}
\end{enumerate}
It is straightforward to check that $J_i \subseteq \tilde{J}_i$ for $i=1,\dots,15$,
i.e., that the strict Cayley-Bacharach locus is contained in the Cayley-Bacharach
locus of~$\BO(\H)$. 

To see that these two loci differ, we construct one point
in $\BO(\Hbar) \setminus \mathcal{Z}(I^{\rm NCB}_\OO)$ which is not contained
in $\BO(\Hbar) \setminus \mathcal{Z}(I^{\rm NSCB}_\OO)$. 
For this purpose, consider the point $\Gamma=(\gamma_{ij})$ such that $\gamma_{32}=
\gamma_{61}=\gamma_{67}=1$ and $\gamma_{ij}=0$ otherwise. It represents the scheme~$\X$
in~$\AA^3_K$ defined by $I_\X=\langle xy-x^3,\, xz-y,\, y^2,\, yz,\, z^2,\, x^2y,\,
x^2z-x^3,\, x^4,\, x^3y,\, x^3z \rangle$.
It is easy to check that~$\Gamma$ is contained in $\mathcal{Z}(J_i)$ for
$i=1,\dots,15$ and in $\mathcal{Z}(I(\BO(\Hbar)))$. Consequently,
the point~$\Gamma$ is a zero of~$I^{\rm NSCB}_\OO$, i.e., the scheme~$\X$
does not have the strict Cayley-Bacharach property.
On the other hand, the point~$\Gamma$ is not a zero of
$f = c_{3 2} c_{6 1}^2  -c_{3 1} c_{6 1} c_{6 2} 
+c_{4 2} c_{6 1} c_{6 2} -c_{4 1} c_{6 2}^2$, and it is straightforward to vertify 
that $f \in \tilde{J}_1\setminus J_1$. Hence $\Gamma$ is not contained in
$\mathcal{Z}(I^{\rm NCB}_\OO)$, i.e., the scheme~$\X$ has the Cayley-Bacharach 
property.
\end{example}

\begin{remark}\label{StrictCB=StrictGor}
Recall that the strict Gorenstein locus equals the
strict Cayley-Bacharach locus in~$\BO(\H)$ if $\Delta=1$, and it is empty 
otherwise (see~\cite{KLR1}, Thm.~6.12). Thus
Algorithm~\ref{alg:SCBlocus} yields an alternative
to Corollary~\ref{cor:StrictGorLoc} for computing the
strict Gorenstein locus in~$\BO(\H)$.
\end{remark}

\bigbreak
%
%

\section{The Strict Complete Intersection Locus in $\BO(\H)$}
\label{The Strict Complete Intersection Locus in BOH}

The last case that we consider is the
locus in $\BO(\H)$ whose $K$-rational points
represent 0-dimensional affine schemes which are
strict complete intersections. Clearly, the first
necessary condition is that the affine Hilbert function~$\H$
has to be symmetric. Then we cover $\BO(\H)$
by $\OO'$-DFB subschemes and use a suitable
version of Algorithm~\ref{alg:SCILocDF} in~$\BO^{\rm dfb}(\OO')$.
Lastly, we use Algorthm~\ref{alg:Combine} to recombine the
results. In this way we arrive at the following algorithm.

\begin{algorithm}{\upshape\bf (Computing the Strict Complete Intersection
Locus)}\label{alg:SCIlocus}\\
Let $\OO=\{t_1,\dots,t_\mu\}$ be an order ideal in~$\mathbb{T}^n$,
let~$\H=(H_0,H_1,\dots)$ be an $(n,\mu)$-admissible Hilbert function which 
dominates $\HFa_\OO$, let $\rho=\min\{ i\ge 0 \mid H_i=\mu \}$, 
and let $\Delta = H_\rho - H_{\rho-1}$.
Consider the following sequence of instructions.
\begin{enumerate}
\item[(1)] Check if~$\H$ is symmetric. If this is not the case,
return the triple $(\OO,I(B_\OO(\Hbar)),\langle 0\rangle)$ and stop.

\item[(2)] Compute the set $\{\OO'_1,\dots,\OO'_\ell\}$
of all order ideals in~$\mathbb{T}^n$ with affine Hilbert 
function~$\H$.

\item[(3)] For $i\in\{1,\dots,\ell\}$, let
${\tt SCILocus}(\OO'_i)$ be function
obtained by applying Algorithm~\ref{alg:SCILocDF}
to~$\OO'_i$. It yields an ideal $J_{\OO'_i}$ in $K[C'_i]$,
where~$C'_i$ is the set of indetermintes
such that $B_{\OO'_i} = K[C'_i] / I(\mathbb{B}_{\OO'_i})$.

\item[(4)] Apply Algorithm~\ref{alg:Combine}
using the function ${\tt SCILocus}(\OO'_i)$ in Step~(4).
Return the resulting list of triples 
$(\OO'_i, I^{\rm dfb}_{\OO \OO'_i},J_i)$, where $i=1,\dots,\ell$.
\end{enumerate}
This is an algorithm which computes a list of triples 
$(\OO'_i, I^{\rm dfb}_{\OO \OO'_i}, J_i)$ such that,
for the ideal
$$
I^{\rm NSCI}_\OO \;=\; {\textstyle\sum\limits_{i=1}^\ell}\;
\bigl( (I^{\rm dfb}_{\OO \OO'_i} + I(\BO(\Hbar))) \;\cap\;
(J_i + I(\BO(\Hbar))) \bigr)
$$
the set $\BO(\Hbar) \setminus \mathcal{Z}(I^{\rm NSCI}_\OO)$ 
contains precisely the $K$-rational points of~$\BO(\H)$ 
which represent schemes having the strict complete intersection
property. 
\end{algorithm}

\begin{proof}
Since the affine Hilbert function~$\H$ of a 0-dimensional strict
complete intersection is symmetric, Step~(1) returns the correct result
when~$\H$ is not symmetric.
The remaining claims follow from Algorithm~\ref{alg:SCILocDF} and
Algorithm~\ref{alg:Combine}.
\end{proof}

Let us use this algorithm to compute the strict complete intersection
locus in a simple example.

\begin{example}\label{1-x-y-x2-x3-x4}
Consider the order ideal $\OO=\{1,x,y,x^2,x^3,x^4\}$ in~$\mathbb{T}^2$ and
the Hilbert function $\H=(1,3,5,6,6,\dots)$ which dominates
$\HFa_\OO = (1,3,4,5,6,6,\dots)$. We apply Algorithm~\ref{alg:SCIlocus} 
and note the main results.
\begin{enumerate}
\item[(1)] The difference function of~$\H$ is $(1,2,2,1,0,\dots)$ and thus
symmetric.

\item[(2)] We find six order ideals, namely $\OO'_1 = 
\{ 1,x,y,x^2,xy,x^3\}$, $\OO'_2 = \{ 1,x,y,x^2,xy,x^2y\}$,
$\OO'_3 = \{ 1,x,y,x^2,y^2,x^3\}$, $\OO'_4 = \{ 1,x,y,x^2,y^2,y^3\}$,
$\OO'_5 = \{ 1,x,y,xy,y^2,xy^2\}$, and $\OO'_6 = \{ 1,x,y,xy,y^2,y^3\}$.

\item[(3)] Using ${\tt SCILocus}(\OO'_i)$ for $i=1,\dots,6$, we get
the following results:
\begin{itemize}
\item[(3.1)]  $I^{\rm dfb}_{\OO \OO'_1} = \langle\; c_{61} \;\rangle$ and
$ J_1 = \langle\;
c_{5 5}^2 c_{6 1}^4  -2 c_{5 1} c_{5 5} c_{6 1}^3 c_{6 5} 
+c_{5 1}^2 c_{6 1}^2 c_{6 5}^2+\cdots -c_{4 2} c_{6 1}^2, \allowbreak 
c_{5 5}^2 c_{6 1}^3 c_{6 2} -2 c_{5 1} c_{5 5} c_{6 1}^2 c_{6 2} c_{6 5}
+c_{5 1}^2 c_{6 1} c_{6 2} c_{6 5}^2+\cdots -c_{4 2} c_{6 1} c_{6 2}
\;\rangle$

\item[(3.2)]  $I^{\rm dfb}_{\OO \OO'_2} = 
\langle\, c_{5 5} c_{6 1}^2  -c_{5 1} c_{6 1} c_{6 5} -c_{5 1}^2  
+c_{4 1} c_{6 1} \,\rangle$ and
$ J_2 = \langle\; 
c_{5 5}^2 c_{6 1}^4  -2 c_{5 1} c_{5 5} c_{6 1}^3 c_{6 5}$ $ 
+c_{5 1}^2 c_{6 1}^2 c_{6 5}^2+\cdots  -c_{4 2} c_{6 1}^2,
c_{4 5} c_{5 2} c_{5 5} c_{6 1}^4  
-c_{4 5} c_{5 1} c_{5 5} c_{6 1}^3 c_{6 2} 
+\cdots -c_{4 1} c_{4 2} c_{6 1} c_{6 2}
\;\rangle$

\item[$\vdots$\;\;]

\item[(3.6)]  $I^{\rm dfb}_{\OO \OO'_6} = 
\langle\, c_{4 2} c_{4 3} c_{5 2} c_{6 1} +c_{4 4} c_{5 2}^2 c_{6 1} 
-\cdots -c_{4 1} c_{5 2} c_{6 2} c_{6 6} 
\,\rangle$ and
$ J_6 = \langle\,
c_{4 5}^2 c_{5 2}^2 c_{6 1}^2 c_{6 2}^2$ $  
-2 c_{4 2} c_{4 5} c_{5 2} c_{5 5} c_{6 1}^2 c_{6 2}^2 +\cdots
-c_{2 2} c_{5 1} c_{5 2} c_{6 2}^2 c_{6 6},
c_{4 5}^2 c_{5 2}^2 c_{6 1}^3 c_{6 2} 
- \cdots -c_{2 1} c_{5 1} c_{5 2} c_{6 2}^2 c_{6 6}
\,\rangle$
\end{itemize}
\end{enumerate}
Thus we obtain 6 triples $(\OO'_i, I^{\rm dfb}_{\OO \OO'_i}, J_i)$,
and using Remark~\ref{ImprovedCombine}, we get that
$$
I^{\rm NSCI}_\OO = {\textstyle\sum\limits_{i=1}^6}
I^{\rm dfb}_{\OO \OO'_i}\cdot J_i + I(\BO(\Hbar))
$$
defines the complement of the strict complete intersection
locus in~$\BO(\Hbar)$. Here
$\sum_{i=1}^6 I^{\rm dfb}_{\OO \OO'_i}\cdot J_i$
is generated by 14 polynomials, and the ideal $I(\BO(\Hbar))$
is generated by 30 polynomials 
\begin{align*}
I(\BO(\Hbar)) =\langle\; &
c_{5 2} c_{6 1} -c_{5 1} c_{6 2},\;
c_{3 4} c_{6 1} +c_{6 4} c_{6 5} +c_{5 4} -c_{6 6},
\quad \dots,
\\ &
c_{1 1} c_{2 1} +c_{1 2} c_{3 1} -c_{1 1} c_{3 2} +c_{1 3} c_{4 1} 
+c_{1 4} c_{5 1} +c_{1 6} c_{6 1} -c_{1 5} c_{6 2}
\;\rangle
\end{align*}

This example also shows that even if an order ideal~$\OO$ does not
have a symmetric Hilbert function, there may exist a strict complete 
intersection scheme~$\X_\Gamma$ represented by a $K$-rational point~$\Gamma$ 
in~$\BO$. Clearly, this point~$\Gamma$ is then not contained in~$\BOdf$. 

In more detail, notice that $\HFa_\OO$ is not symmetric, but the affine Hilbert
function~$\H$, which dominates~$\HFa_\OO$, is symmetric.
Let us consider the 0-dimensional subscheme~$\X$ of~$\AA^2_K$ defined
by~$I_\X= \langle x^3-xy,\, y^2\rangle$. Then the degree forms $x^3,\, y^2$
are a homogeneous regular sequence, and
hence~$\X$ is a strict complete intersection scheme.
For the order ideal $\OO=\{1,x,y,x^2,y^2,x^3,x^4\}$,
the ideal~$I_\X$ has an $\OO$-border basis, namely
$$
G \;=\; \{\; y^2,\, x^3-xy,\, xy^2,\, x^3y-x^2y,\, x^2y^2  \;\}
$$
Thus the point~$\Gamma$ of~$\BO$ which represents~$\X$ is contained
in the strict complete intersection locus of~$\BO(\H)$.
However, the ideal~$\DF(I_\X)=\langle x^3,y^2\rangle$ has no
$\OO$-border basis, since $x^3\in\OO$. Thus the point~$\Gamma$
is not contained in~$\BOdf$.
\end{example}


\bigbreak
\section*{Acknowledgements}

The first and second authors were partially supported by the 
Vietnam National Foundation for Science and Technology Development
(NAFOSTED) grant number 101.04-2019.07. The third author would
like to thank the University of Passau for its hospitality and support
during part of the preparation of this paper. The authors are indebted
to the referee for several insightful and useful remarks.

\bigbreak


\begin{thebibliography}{99}
\addcontentsline{toc}{section}{\quad\; References}


\bibitem{CoCoA} J.\ Abbott, A.M.\ Bigatti, and L.\ Robbiano,
CoCoA: a system for doing Computations in Commutative Algebra,
available at http://cocoa.dima.unige.it

\bibitem{BCR19} C.\ Bertone, F.\ Cioffi, and M.\ Roggero,
Smoothable Gorenstein points via marked Schemes and double-generic initial ideals,
Exp. Math. (2019), DOI: 10.1080/10586458.2019.1592034


\bibitem{CJN15}  G.\ Casnati, J.\ Jelisiejew, and R.\ Notari,
Irreducibility of the Gorenstein loci of Hilbert schemes via ray families,
Algebra \& Number Theory {\bf 9} (2015), 1525--1570.


\bibitem{CN09} G.\ Casnati, and R.\ Notari,
On the Gorenstein locus of some punctual Hilbert schemes. 
J. Pure Appl. Algebra {\bf 213} (2009), 2055--2074.


\bibitem{CN11} G.\ Casnati, and R.\ Notari,
On the irreducibility and the singularities of the Gorenstein locus of the 
punctual Hilbert scheme of degree 10,
J. Pure Appl. Algebra {\bf 215} (2011), 1243--1254.


\bibitem{E} D.\  Eisenbud, {\it Commutative Algebra with a View Toward Algebraic Geometry}, 
Graduate Texts in Math. {\bf 150}, Springer-Verlag, New York 1995.


\bibitem{Dri13} V.\  Drinfeld, On algebraic spaces with an action of $G_m$,
preprint 2013, available at  {\tt arXiv:math 1308.2604 [math.AG]}.


\bibitem{JS19} J.\ Jelisiejew and \L.\ Sienkiewicz. Bia\l ynicki-Birula decomposition 
for reductive groups, J. Math. Pures Appl. {\bf 131} (2019), 290--325.

\bibitem{Gro} A.\ Grothendieck, Techniques de construction et th\'eor\`emes
d'existence en g\'eom\'etrie alg\'ebrique IV: les sch\'emas de Hilbert,
Asterisque {\bf 6} (1960-1961), 249--276.

\bibitem{Hai} M.\ Haiman, t,q-Catalan numbers and the Hilbert scheme,
Discrete Math.\ {\bf 193} (1998), 201-224.

\bibitem{Hui1} M.\ Huibregtse, An elementary construction of the multigraded
Hilbert scheme of points, Pacific J.\ Math.\ {\bf 223} (2006), 269--315.

\bibitem{Hui2} M.\ Huibregtse, The cotangent space at a monomial ideal
of the Hilbert scheme of points of an affine space, preprint 2005,
available at {\tt arxiv:math/0506.575[math.AG]}.

\bibitem{IK} A.\ Iarrobiano and V.~Kanev, Power sums, Gorenstein algebras
and determinantal loci, Lect.\ Notes Math.\ {\bf 1721}, Springer-Verlag,
Berlin, 1999.

\bibitem{KLR1} M. Kreuzer, L.N.~Long, and L. Robbiano,
On the Cayley-Bacharach property, Comm.\ Algebra {\bf 47} (2019), 328--354.

\bibitem{KLR2} M. Kreuzer, L.N.~Long, and L. Robbiano,
Algorithms for checking zero-dimensional complete intersections,
available at {\tt arxiv:math/1903.09563 [math.AG]},
(to appear in J.\ Commut.\ Algebra)

\bibitem{KLR3} M. Kreuzer, L.N.~Long, and L.\ Robbiano,
Computing subschemes of the border basis scheme (2017-2019),
{\tt https://www.symbcomp.fim.uni-passau.de/en/projects/}


\bibitem{KR1} M.\ Kreuzer and L.\ Robbiano, {\it Computational
Commutative  Algebra 1}, Springer, Heidelberg, 2000.

\bibitem{KR2} M.\ Kreuzer and L.\ Robbiano, {\it Computational
Commutative Algebra 2}, Springer, Heidelberg, 2005.

\bibitem{KR3} M.\ Kreuzer and L.\ Robbiano, {\it Computational  Linear
and Commutative Algebra}, Springer, Heidelberg, 2016.

\bibitem{KR4} M.\ Kreuzer and L.\ Robbiano, Deformations of border
bases, Collect.\ Math.\ {\bf 59} (2008), 275-297.

\bibitem{KR5} M.\ Kreuzer and L.\ Robbiano, The geometry of border 
bases, J.\ Pure Appl.\ Algebra {\bf 215} (2011), 2005-2018.

\bibitem{Rob} L.\ Robbiano, On border basis and Gr\"obner basis schemes,
Collect.\ Math.\ {\bf 60} (2009), 11-25.

\bibitem{Sip} B.\ Sipal, Border basis schemes, dissertation,
University of Passau, Passau 2017.

\end{thebibliography}
\end{document}